\newcommand{\R}{\mathbb{R}}
\newcommand{\C}{\mathbb{C}}
\newcommand{\E}{\mathcal{E}}
\newcommand{\M}{\mathcal{M}}
\newcommand{\K}{\mathcal{K}}
\renewcommand{\S}{\mathcal{S}}
\numberwithin{equation}{section}
\let \eps \varepsilon
\let \aleq \lesssim
\let \ds \displaystyle
\let \e \eps
\let \om \omega
\let \p \partial
\let \la \lambda
\newtheorem{theorem}{Theorem}[section]
\newtheorem{assumption}[theorem]{Assumption}
\newtheorem{corollary}[theorem]{Corollary}
\newtheorem{remark}[theorem]{Remark}
\newtheorem{lemma}[theorem]{Lemma}
\begin{document}

\title{
Solitary Waves and Dynamics for Subcritical Perturbations of Energy Critical NLS
}

\author{Matt Coles and Stephen Gustafson
\\ \emph{\small{Department of Mathematics, University of British Columbia}} \\ \emph{\small{1984 Mathematics Road, Vancouver, British Columbia, Canada V6T 1Z2}}}

\maketitle

\begin{abstract}
\noindent We consider a perturbed energy critical focusing Nonlinear Schr\"odinger Equation in three dimensions. We construct solitary wave solutions for focusing subcritical perturbations as well as defocusing supercritical perturbations. 
The construction relies on the resolvent expansion, which is singular due to the presence of a resonance. 
Specializing to pure power focusing subcritical perturbations we demonstrate, via variational arguments, and for a certain range of powers, the existence of a ground state solitary wave, which is then shown to be the previously constructed solution. 
Finally, we present a dynamical theorem 
which characterizes the fate of radially-symmetric solutions whose initial data are below the action of the ground state. 
Such solutions will either scatter or blow-up in finite time depending on the sign of a certain function of their initial data.  
\let\thefootnote\relax\footnote{
\emph{2010 Mathematics Subject Classification}: 35Q55.}
\end{abstract}



\tableofcontents

\section{Introduction and Main Results }
\label{intro}

We consider here Nonlinear Schr\"odinger equations in three
space dimensions, of the form
\begin{align}\label{NLSeps}
  i \partial_t u = -\Delta u - |u|^4 u - \eps g(|u|^2) u,
\end{align}
for $u(x,t): \R^3 \times \R \to \C$, with $\eps$ a small, real parameter.
Nonlinear Schr\"odinger-type equations are well-known to have 
important applications to areas such as quantum mechanics and optics,
but have also been intensively studied as models of nonlinear dispersive
phenomena more generally \cite{Fibich, Sulem}. 

Solutions (of sufficient regularity and spatial decay) 
conserve the {\it mass}, and {\it energy}:
\[
  \M(u) = \frac{1}{2} \int_{\R^3} |u|^2 \; dx, \qquad
  \E_\eps(u) = \int_{\R^3} \left\{ \frac{1}{2} |\nabla u|^2
  - \frac{1}{6} |u|^6 - \frac{\eps}{2} G(|u|^2) \right\} \; dx 
\] 
where $G' = g$. We are particularly interested in the existence
(and dynamical implications) of {\it solitary wave} solutions 
\[
  u(x,t) = Q(x) e^{i \omega t}
\]
of~\eqref{NLSeps}.
We will consider only real-valued solitary wave profiles, $Q(x) \in \R$,
for which the corresponding stationary problem is 
\begin{align}\label{elliptic}
  -\Delta Q - Q^5 - \eps f(Q) + \omega Q = 0, 
  \qquad f(Q) = g(Q^2) Q. 
\end{align}
Since the perturbed solitary wave equation~\eqref{elliptic} is the
Euler-Lagrange equation for the {\it action}
\[
  \mathcal{S}_{\eps,\omega}(u) := \E_{\eps}(u) + \omega \M(u) \; , 
\]
the standard {\it Pohozaev relations} \cite[p.~553]{Evans}
give necessary conditions for existence of finite-action solutions
of~\eqref{elliptic}:
\begin{equation} \label{Poho}
\begin{split}
  &0 = \K_\eps(u) 
  := \frac{d}{d\mu} \mathcal{S}_{\eps,\omega} (T_\mu u  ) \bigg|_{\mu=1} \\
&\quad \quad \ \; \quad \quad  = \int |\nabla Q|^2 - \int Q^6 + \eps \int \left( 3F(Q) - \frac{3}{2} Q f(Q)) \right) \\  
  & 0 = \K^{(0)}_{\eps, \omega}(u) 
  := \frac{d}{d\mu} \mathcal{S}_{\eps,\omega} (S_\mu u  ) \bigg|_{\mu=1} 
  = \eps \int \left( 3 F(Q) - \frac{1}{2} Q f(Q) \right) - \omega \int Q^2 
\end{split}
\end{equation}
where 
\[
  (T_\mu u)(x) := \mu^{\frac{3}{2}} u (\mu x), \qquad
  (S_\mu u)(x) := \mu^{\frac{1}{2}} u (\mu x)
\]
are the scaling operators preserving, respectively,
the $L^2$ norm and the $L^6$ (and $\dot H^1$) norm,
and $F' = f$ (so $F(Q) = \frac{1}{2} G(Q^2)$).
 
The corresponding unperturbed ($\eps = 0$) problem, the 3D quintic equation
\begin{equation} \label{NLS0}
  i \partial_t u = -\Delta u - |u|^4 u,
\end{equation}
is {\it energy critical} in the sense that the scaling 
\[
  u(x,t) \mapsto u_\lambda(x,t) := \lambda^{1/2} u(\lambda x, \lambda^2 t)
\]
which preserves~\eqref{NLS0}, also leaves invariant its energy
\[
  \E_0(u) = \int_{\R^3} \left\{ \frac{1}{2} |\nabla u|^2
  - \frac{1}{6} |u|^6 \right\} \; dx, \qquad
  \E_0(u_\lambda(\cdot,t)) = \E_0(u(\cdot,\lambda^2 t)).
\]
One implication of energy criticality is that~\eqref{NLS0} fails to 
admit solitary waves with $\omega \not= 0$ 
-- as can be seen from~\eqref{Poho} --
but instead admits the {\it Aubin-Talenti} {\it static} solution
\begin{equation} \label{elliptic0}
  W(x)= \left( 1 + \frac{|x|^2}{3} \right)^{-1/2}, \quad \quad 
  \Delta W + W^5 = 0 , 
\end{equation}
whose slow spatial decay means it fails to lie in $L^2(\R^3)$, though 
it does fall in the {\it energy space}
\[
  W \not\in L^2(\R^3), \quad
  W \in \dot H^1(\R^3) = \{ u \in L^6(\R^3) \; | \; \| u \|_{\dot H^1} :=
  \| \nabla u \|_{L^2} < \infty \}.
\]  
By scaling invariance, 
$W_\mu := S_\mu W = \mu^{1/2} W(\mu x)$, for $\mu > 0$,
also satisfy~\eqref{elliptic0}, as do 
their negatives and spatial translates $\pm W_\mu(\cdot + a)$ ($a \in \R^3$).
These functions (and their multiples) are well-known to be the only
functions realizing the best constant appearing in the Sobolev inequality 
\cite{aubin,talenti}
\[
  \int_{\R^3} |u|^6 \leq C_3 \left( \int_{\R^3} |\nabla u|^2 \right)^3, \quad 
  C_3 = \frac{\int_{\R^3} W^6}{ \left(\int_{\R^3} |\nabla W|^2 \right)^3 }
  = \frac{1}{ \left(\int_{\R^3} W^6 \right)^2 },
\] 
where the last equality used $\int |\nabla W|^2 = \int W^6$
(as follows from~\eqref{Poho}). 
A closely related statement is that $W$, 
together with its scalings, negatives and spatial translates,
are the only minimizers of the energy under the Pohozaev constraint~\eqref{Poho}
with $\eps = \omega = 0$:
\begin{equation} \label{var0}
\begin{split}
  &\min\{ \E_0(u) \; | \; 0 \not= u \in \dot H^1(\R^3), \; \K_0(u) = 0 \}
  = \E_0(W) = \E_0( \pm W_\mu(\cdot + a)  ) , \\
  &\quad \K_0(u) = \int_{\R^3} \left\{ |\nabla u|^2 - |u|^6 \right\}.
\end{split}
\end{equation}
It follows that for solutions of~\eqref{NLS0} lying energetically 
`below' $W$, $\E_0(u) < \E_0(W)$, the sets where $\K_0(u) > 0$
and where $\K_0(u) < 0$ are invariant for~\eqref{NLS0}. 
Kenig-Merle \cite{Kenig} showed that radially symmetric solutions
in the first set scatter to $0$, while those in the second set
become singular in finite time (in dimensions 3, 4, 5).
In this way, $W$ plays a central role in classifying solutions of~\eqref{NLS0},
and it is natural to think of $W$ (together with its scalings and spatial translates) 
as the {\it ground states} of~\eqref{NLS0}.
The assumption in \cite{Kenig} that solutions be radially symmetric was removed in \cite{Killip_Visan} for dimensions $n \geq 5$ and then for $n=4$ in \cite{Dodson}. 
Removing the radial symmetry assumption appears still open for $n=3$. 
A characterization of the dynamics for initial data at the threshold $\mathcal{E}_0(u_0) = \mathcal{E}_0(W)$ appears in \cite{Duyc}, and a classification of global dynamics based on initial data slightly above the ground state is given in \cite{Naka}. 

Just as the main interest in studying~\eqref{NLS0}
is in exploring the implications of critical scaling, the main interest
in studying~\eqref{NLSeps} and~\eqref{elliptic} 
here is the effect of {\it perturbing} the critical scaling, in particular:
the emergence of ground state solitary waves from the static solution $W$,
the resulting energy landscape, and its implications for the dynamics. 

A natural analogue for~\eqref{elliptic} of the ground state variational 
problem~\eqref{var0} is
\begin{equation} \label{minS}
  \min \{ \mathcal{S}_{\eps,\omega}(u) \; |  \; u \in H^1 \setminus \{0\}, 
  \mathcal{K}_\eps(u) = 0 \}.
\end{equation} 
For a study of similar minimization problems see \cite{Bere2} and \cite{Bere3} as well as \cite{Alves}, which 
treats a large class of critical problems and establishes the existence of ground state solutions. 
In space dimensions $4$ and $5$, ~\cite{Slim,Slim2} 
showed the existence of minimizers for (the analogue of)~\eqref{minS}, hence of
ground state solitary waves, for each $\omega >0$ 
and $\eps g(|u|^2)u$ sufficiently small and subcritical;
moreover, a blow-up/scattering dichotomy
`below' the ground states in the spirit of~\cite{Kenig} holds.
Our main goal in this paper is to establish the existence of ground
states, and the blow-up/scattering dichotomy, in the $3$-dimensional setting.
In dimension $3$, the question of the existence of 
minimizers for~\eqref{minS} is more subtle,
and we proceed via a perturbative construction, rather than 
a direct variational method.

A key role in the analysis is played by the linearization of~\eqref{elliptic0}
around $W$, in particular the linearized operator
\begin{align*}
	H:= -\Delta +V :=-\Delta -5W^4,
\end{align*}
which as a consequence of scaling invariance has the following {\it resonance} :
\begin{equation} \label{res_un_norm}
   H \; \Lambda W =0, \quad \Lambda W := 
  \frac{d}{d \mu} S_{\mu} W |_{\mu=0} =  
  \left( \frac{1}{2} +x \cdot \nabla \right)W \;
  \notin L^2(\R^3).
\end{equation}
Indeed $\Lambda W = W^3 - \frac{1}{2} W$ decays like $|x|^{-1}$, and so 
\[
  W, \; \Lambda W \; \in L^r(\R^3) \cap \dot H^1(\R^3), \;\; 3 < r \leq \infty.
\] 

Our first goal is to find solutions to (\ref{elliptic}) where $\omega=\omega(\eps)>0$ is small and $Q(x) \in \R$ is a perturbation of $W$ in some appropriate sense. 
One obstacle is that $W \notin L^2$ is a slowly decaying function, 
whereas solutions of~\eqref{elliptic} satisfy $Q \in L^2$, 
and indeed are exponentially decaying.

\begin{assumption}\label{fassump}
Take $f:\R \to \R \in C^1$ such that $f(0)=0$ and 
\begin{align*}
	|f'(s)| \aleq |s|^{p_1-1} + |s|^{p_2-1}
\end{align*}
with $2< p_1 \leq p_2 <\infty$. 
Further assume that 
\begin{align*}
\langle \Lambda W, f(W) \rangle <0. 
\end{align*}
\end{assumption}
\begin{remark}
Here, as elsewhere in the paper, the bracket $\langle \cdot \; , \; \cdot \rangle$
denotes the standard $L^2(\R^3;\R)$ inner-product
\[
  \langle g , \; h \rangle = \int_{\R^3} g(x) h(x) \; dx
\]
interpreted where necessary as the usual $L^r$-$L^{r'}$ duality pairing.
For example, in Assumption~\ref{fassump}, 
$f(W) \in L^q$ for any $q > \frac{3}{p_1}  > \frac{3}{2}$, while 
$\Lambda W \in L^r$ for all $r > 3$. So by choosing $r$ close enough to $3$,
the inner product makes sense as an $L^r$-$L^{r'}$ pairing.
\end{remark}

\begin{theorem}\label{theorem1} 
There exists $\eps_0>0$ such that for each $0<\eps \leq \eps_0$, 
there is $\omega=\omega(\eps) > 0$, and smooth, real-valued, radially symmetric $Q=Q_\eps \in H^1(\R^3)$ satisfying \emph{(\ref{elliptic})} with 
\begin{align}
	\omega&=\omega_1 \eps^2 + \tilde{\omega} \label{wexp}\\
	Q(x) &= W(x) + \eta(x)  \label{Qexp}
\end{align}
where
\begin{align*}
\omega_1 = \left( \frac{-\langle \Lambda W, f(W) \rangle}{6 \pi} \right)^2,
\end{align*}
$\tilde{\omega} =O(\eps^{2+\delta_1})$ 
for any $\delta_1 < \min(1, p_1-2)$,
$\|\eta\|_{L^r} \aleq \eps^{1-3/r}$ for all $3 < r \leq \infty$, 
and $\|\eta\|_{\dot{H}^1} \aleq \eps^{1/2}$.  
In particular, $Q \rightarrow W$ in $L^{r} \cap \dot{H}^1$ as $\eps \to 0$.
\end{theorem}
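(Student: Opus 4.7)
The plan is to construct $Q$ perturbatively as $Q = W + \eta$ and to determine $\omega = \omega(\eps)$ simultaneously, by a Lyapunov--Schmidt style reduction centered on the threshold resonance $\Lambda W$ of $H$. Substituting into~\eqref{elliptic} and using $-\Delta W = W^5$ turns the problem into the fixed-point equation
\[
(H + \omega)\,\eta \;=\; -\omega W + \eps\, f(W + \eta) + N(\eta), \qquad
N(\eta) := (W+\eta)^5 - W^5 - 5 W^4 \eta.
\]
Because solutions of~\eqref{elliptic} with $\omega > 0$ decay exponentially at scale $1/\sqrt{\omega}$, while $W$ decays only like $|x|^{-1}$, the correction $\eta$ must match $-W$ for $|x| \gg 1/\sqrt{\omega}$ in order that $Q \in L^2$; this is how $W \notin L^2$ is reconciled with $Q \in H^1$. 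Integrating $|W|^r \sim |x|^{-r}$ over the far region predicts $\|\eta\|_{L^r} \sim (\sqrt{\omega})^{1-3/r}$ and $\|\nabla \eta\|_{L^2}^2 \sim \sqrt{\omega}$, matching the claimed estimates under $\sqrt{\omega} \sim \eps$. Accordingly, the natural working space is $L^r \cap \dot H^1$ with weights of the above sizes, enforced within the radial category.

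The key analytic input is the low-energy expansion of $R_\omega := (H+\omega)^{-1}$ as $\omega \to 0^+$. Since $H\Lambda W = 0$ with $\Lambda W \sim c_0\, |x|^{-1}$ at infinity (the zero-energy resonance), the standard threshold analysis for three-dimensional Schr\"odinger operators will produce a singular expansion of the schematic form
\[
R_\omega\, g \;=\; -\,\frac{\langle \Lambda W,\, g\rangle}{6\pi \sqrt{\omega}}\,\Lambda W \;+\; R_{\mathrm{reg}}(\omega)\, g,
\]
with $R_{\mathrm{reg}}(\omega)$ bounded uniformly on the appropriate weighted (or $L^{r}$, $r > 3$) spaces as $\omega \to 0^+$; the coefficient $6\pi$ traces to the asymptotic constant in $\Lambda W$ combined with the $(4\pi |x-y|)^{-1}$ factor in the free Green's function. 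The non-$L^{2}$ source $-\omega W$ is made sense of via the identity $(H+\omega)W = \omega W - 4 W^{5}$, which yields $R_\omega(\omega W) = W + 4 R_\omega W^{5}$ and replaces the bad source by $W^{5} \in L^{2}$.

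Substituting the resolvent expansion into the fixed-point equation and collecting the $1/\sqrt{\omega}$-singular coefficient then produces a scalar solvability condition of schematic form
\[
6\pi \sqrt{\omega} \;+\; \eps\, \langle \Lambda W,\, f(W)\rangle \;+\; (\text{higher order in } \eps, \sqrt{\omega}) \;=\; 0.
\]
Assumption~\ref{fassump} supplies $\langle \Lambda W, f(W)\rangle < 0$, so the implicit function theorem delivers a smooth branch $\omega = \omega_1 \eps^{2} + \tilde\omega$ with $\omega_1 = (-\langle \Lambda W, f(W)\rangle/(6\pi))^{2}$ and $\tilde\omega = O(\eps^{2+\delta_1})$ for $\delta_1 < \min(1, p_1 - 2)$, the error size controlled via the growth $|f'(s)| \lesssim |s|^{p_1-1} + |s|^{p_2-1}$. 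With $\omega(\eps)$ thus pinned down to cancel the singular part, the remaining regular equation for $\eta$ is closed by Banach contraction in the above space, using uniform bounds on $R_{\mathrm{reg}}$, subcritical multiplicative estimates on $f(W+\eta) - f(W)$, and the quadratic-and-higher structure of $N(\eta)$. Radial symmetry is preserved throughout; smoothness of $Q = W + \eta$ then follows by standard elliptic bootstrapping.

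The main obstacle is the precise threshold-resolvent analysis: extracting the explicit $6\pi$ coefficient and establishing uniform bounds on $R_{\mathrm{reg}}(\omega)$ in the non-standard function spaces adapted to the slow decay of $W$ and $\Lambda W$. Once this spectral input is in place, the Lyapunov--Schmidt reduction and the final contraction follow by reasonably standard arguments.
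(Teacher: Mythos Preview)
Your outline matches the paper's strategy in its essentials: a Lyapunov--Schmidt reduction around the resonance $\Lambda W$, a scalar solvability condition that fixes $\sqrt{\omega}\sim\eps$, and a contraction for $\eta$ in $L^r$ ($r>3$). The paper organizes the reduction through the factorization $(H+\lambda^2)^{-1}=(1+R_0(-\lambda^2)V)^{-1}R_0(-\lambda^2)$ and imposes the orthogonality $\langle R_0(-\lambda^2)V\psi,\mathcal F\rangle=0$ rather than projecting onto $\Lambda W$ directly; it then solves this condition for $\lambda$ by a Banach fixed point (not the implicit function theorem) as a function of $\eta$, and only afterwards runs the contraction for $\eta$. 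These are organizational differences.

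There is, however, a genuine wrinkle in your derivation of the solvability condition. You propose to tame the source $-\omega W$ via $(H+\omega)W=\omega W-4W^5$, reducing to the well-behaved $W^5$. But $\langle\Lambda W,W^5\rangle=0$ (this is the $p=5$ case of the Pohozaev computation in Remark~\ref{nosup}), so after that substitution the $1/\sqrt{\omega}$-singular coefficient in your expansion of $R_\omega$ receives \emph{no} contribution from the $-\omega W$ term, and the ``$6\pi\sqrt{\omega}$'' you write down does not arise where you say it does. In your framework the balancing $\sqrt{\omega}$ would instead have to come from the $O(\sqrt{\omega})$ correction in $R_{\mathrm{reg}}(\omega)W^5$ (equivalently, from the cancellation $-W-4R_{\mathrm{reg}}(0)W^5=0$ and its first-order defect), which your schematic expansion has suppressed. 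The paper sidesteps this entirely: it does \emph{not} use your identity, but computes directly
\[
\lambda\,\langle R_0(-\lambda^2)V\psi,\,W\rangle \;\longrightarrow\; 2\sqrt{3\pi}
\]
via a Fourier-side argument after splitting $W=\sqrt{3}/|x|+\tilde W$ (Lemma in Section~\ref{findinglambda}), and this limit is precisely the origin of the $6\pi$ and of the leading $\sqrt{\omega}$ in the reduced equation. So your plan is sound, and you have correctly flagged the threshold-resolvent analysis as the main obstacle; but the specific mechanism by which the coefficient appears is not the one you describe, and making it rigorous along your route would require one more order in the resolvent expansion than you have written.
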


\begin{remark}
We have a further decomposition of $\eta$ but the leading order term depends on whether we measure it in  $L^r$ with $r=\infty$ or $3<r<\infty$. See Lemmas \ref{etalemma} and \ref{etabound}. 
\end{remark}

\begin{remark} \label{nosup}
Note that allowable $f$ include $f(Q)=|Q|^{p-1}Q$
with $2<p<5$, the subcritical, pure-power, focusing nonlinearities, as well as 
$f(Q) = -|Q|^{p-1} Q$ with $5<p<\infty$, the supercritical, pure power, defocusing nonlinearities.  Observe
\begin{align*}
	\langle \Lambda W, W^p \rangle &= \int \left( \frac{1}{2} W^{p+1} + W^p (x \cdot \nabla) W \right) \\
	&= \int \left( \frac{1}{2} W^{p+1} + \frac{1}{p+1} (x \cdot \nabla) W^{p+1} \right) \\
	&= \int \left( \frac{1}{2} - \frac{3}{p+1} \right) W^{p+1} 
\end{align*}
which is negative when $2<p<5$ and positive when $p>5$. 
\end{remark}

\begin{remark}
Since $Q_\eps \to W$ in $L^r$ for $r \in (3,\infty]$,
the Pohozaev identity~\eqref{Poho}, together with the divergence theorem, implies that for any such family of solutions, a necessary condition is
\begin{align*}
  \langle \Lambda W, f(W) \rangle = 
  \int \left( \frac{1}{2} W f(W) - 3F(W) \right) 
  &= \lim_{\eps \to 0} \int \left( \frac{1}{2} Q_\eps f(Q_\eps) - 3 F(Q_\eps) \right) \\
  &\leq 0.
\end{align*}
\end{remark}

\begin{remark} \label{H1dot}
Note that $Q \in L^r \cap \dot{H}^1$ $(3<r \leq \infty)$ satisfying \emph{(\ref{elliptic})} lies automatically in $L^2$ (and hence $H^1$) by the Pohozaev relations~\eqref{Poho}:
\begin{align} \label{theotherone}
	0 = \int |\nabla Q|^2 -\int Q^6 - \eps \int f(Q) Q + \omega \int Q^2.
\end{align}
The first two integrals are then finite. 
We can also bound the third 
\begin{align*}
	\left| \int f(Q) Q \right| \leq \int |f(Q)| |Q| \aleq \int |Q|^{p_1+1} + \int |Q|^{p_2+1} < \infty
\end{align*}
since $p_2+1 \geq p_1+1 >3$. 
In this way $\int Q^2$ must be finite. 
Moreover, since $Q \in L^r$ with $r > 6$, a standard elliptic regularity argument
implies that $Q$ is in fact a smooth function.
Therefore it suffices to find a solution $Q \in L^r \cap \dot{H}^1$. 
\end{remark}

The paper \cite{Davila} considers an elliptic problem similar to (\ref{elliptic}):
\begin{align*}
-\Delta Q + Q - Q^p - \lambda Q^q = 0
\end{align*} 
with $1<q<3$, $\lambda>0$ large and fixed, and $p<5$ but $p \to 5$. 
They demonstrate the existence of three positive solutions, one of which approaches $W$ (\ref{elliptic0}) as $p \to 5$. 
The follow up \cite{Chen} established a similar result with $p \to 5$ but $p >5$ and $3<q<5$. While \cite{Davila} and \cite{Chen} are perturbative in nature, their method of construction differs from ours.

The proof of Theorem \ref{theorem1} is presented in Section~\ref{construction}.
As the statement suggests, the argument is perturbative -- 
the solitary wave profiles $Q$ are constructed as
small (in $L^r$) corrections to $W$. The set-up is given in Section~\ref{SetUp}.
The equation for the correction $\eta$ involves the resolvent of the linearized 
operator $H$. A Lyapunov-Schmidt-type procedure is used to recover
uniform boundedness of this resolvent in the presence of the resonance
$\Lambda W$ -- see Section~\ref{res} for the relevant estimates -- 
and to determine the frequency $\omega$, see Section~\ref{findinglambda}.
Finally, the correction $\eta$ is determined by a fixed point argument
in Section~\ref{eta}. 


The next question is if the solution $Q$ is a {\it ground state} in a suitable sense. 
For this question, we will specialize to pure, subcritical powers 
\[
  f(Q)=|Q|^{p-1} Q, \quad 3<p<5, 
\]
for which the `ground state' variational problem~\eqref{minS} reads
\begin{equation} \label{minSp}
\begin{split}
  &\qquad \min \{ \mathcal{S}_{\eps,\omega}(u) \; |  \; u \in H^1(\R^3) \setminus \{0\}, 
  \mathcal{K}_\eps(u) = 0 \}, \\
  & \mathcal{S}_{\eps,\omega}(u) = 
  \frac{1}{2} \|\nabla u\|_{L^2}^2 - \frac{1}{6} \|u\|_{L^6}^6 
  - \frac{1}{(p+1)} \eps \|u\|_{L^{p+1}}^{p+1} + \frac{1}{2} \omega \| u \|_{L^2}^2, \\
  &\mathcal{K}_{\eps}(u) = 
  \|\nabla u\|_{L^2}^2 - \|u\|_{L^6}^6 - \frac{3(p-1)}{2(p+1)} \eps \|u\|_{L^{p+1}}^{p+1}.\end{split}
\end{equation}

\begin{theorem}\label{theorem2} 	
Let $f(Q)=|Q|^{p-1} Q$ with $3<p<5$. 
There exists $\eps_0$ such that for each $0< \eps \leq \eps_0$ and 
$\omega = \omega(\eps) > 0$ furnished by Theorem~\ref{theorem1}, 
the solitary wave profile $Q_\eps$ constructed in Theorem \ref{theorem1}
is a minimizer of problem \eqref{minSp}. Moreover, $Q_\eps$
is the unique positive, radially-symmetric minimizer.
\end{theorem}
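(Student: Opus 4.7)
The plan is to proceed in three stages: establish the existence of a minimizer for~\eqref{minSp}; show any such minimizer is a positive, radial $H^1$ solution of~\eqref{elliptic}; and identify any such solution with the perturbatively-constructed $Q_\eps$ from Theorem~\ref{theorem1}.

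For the existence and Euler--Lagrange stages, the starting observation is that on the constraint $\mathcal{K}_\eps(u)=0$ the action rewrites as
\begin{equation*}
  \mathcal{S}_{\eps,\omega}(u)\big|_{\mathcal{K}_\eps(u)=0} = \tfrac{1}{3}\|u\|_{L^6}^6 + \tfrac{(3p-7)\eps}{4(p+1)}\|u\|_{L^{p+1}}^{p+1} + \tfrac{\omega}{2}\|u\|_{L^2}^2,
\end{equation*}
whose three coefficients are strictly positive because $3<p<5$ gives $3p-7>0$ and $\omega>0$. This forces $m_{\eps,\omega}>0$ and shows that minimizing sequences are bounded in $H^1$. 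Schwarz symmetrization allows the sequence to be taken radial and non-increasing, and the compactness $H^1_{\text{rad}}\hookrightarrow L^{p+1}$ produces a weak $H^1$ limit $u_*$ which converges strongly in $L^{p+1}$. Non-triviality of $u_*$ requires ruling out concentration into an Aubin--Talenti bubble at energy $\E_0(W)$, which I would handle by combining the trial-function upper bound $m_{\eps,\omega}\le \mathcal{S}_{\eps,\omega}(Q_\eps)$ with a Struwe-type profile decomposition of minimizing sequences near the critical threshold. The constraint $\mathcal{K}_\eps(u_*)=0$ is restored by a standard $T_\mu$-scaling argument (if $\mathcal{K}_\eps(u_*)<0$, project onto the manifold via $T_\mu u_*$ with $\mu<1$ and strictly decrease the action). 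A Lagrange-multiplier computation -- pairing the relation $\mathcal{S}_{\eps,\omega}'(u_*)=\gamma\mathcal{K}_\eps'(u_*)$ with the $T_\mu$-direction and using the Pohozaev identities -- forces $\gamma=0$, so $u_*$ solves~\eqref{elliptic} with the prescribed $\omega(\eps)$; elliptic regularity and the strong maximum principle upgrade $u_*\ge 0$ (from rearrangement) to strict positivity and smoothness.

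The main obstacle is the identification $u_*=Q_\eps$. My strategy is first to show $\mathcal{S}_{\eps,\omega}(u_*)\to \E_0(W)$ as $\eps\to 0$ (squeezed between the trial-function upper bound using $Q_\eps$ and the bubble-concentration lower bound), and then to apply a profile decomposition in $\dot H^1$ to conclude that $u_*$ is asymptotically close to a single Talenti bubble $W_\mu$ (radial symmetry rules out nontrivial translates, positivity rules out sign-changing profiles). The second Pohozaev relation
\begin{equation*}
  \omega\|u_*\|_{L^2}^2 = \tfrac{(5-p)\eps}{2(p+1)}\|u_*\|_{L^{p+1}}^{p+1},
\end{equation*}
combined with the specific expansion $\omega(\eps)=\omega_1\eps^2+O(\eps^{2+\delta_1})$ from Theorem~\ref{theorem1}, pins the scale down to $\mu=1+o(1)$, so that $u_*\to W$ in $L^r\cap \dot H^1$ for $3<r\le \infty$. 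Once $u_*$ lies in the small $L^r\cap \dot H^1$ neighbourhood of $W$ where the Lyapunov--Schmidt contraction of Section~\ref{eta} is valid, uniqueness of that fixed point forces $u_*=Q_\eps$, yielding simultaneously that $Q_\eps$ is a minimizer and that it is the unique positive, radial minimizer.
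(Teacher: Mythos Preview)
Your overall strategy matches the paper's: use $Q_\eps$ as a test function to get a minimizer below the critical threshold, show via profile decomposition that minimizers approach a rescaled $W$, then invoke the local uniqueness from the fixed-point construction. However, two of your steps are genuine gaps rather than routine details.

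First, in the existence stage you write ``trial-function upper bound $m_{\eps,\omega}\le \mathcal{S}_{\eps,\omega}(Q_\eps)$''. That inequality is trivially true and useless by itself; what you actually need to rule out bubble loss is the \emph{strict} inequality $\mathcal{S}_{\eps,\omega}(Q_\eps) < \E_0(W)$. This is not free: the paper (Lemma~\ref{eminimizer}) computes the leading order of $\mathcal{S}_{\eps,\omega}(Q_\eps) - \E_0(W)$ using the resolvent expansion for $\eta$ and finds it equals $-\tfrac{p-3}{2(p+1)}\eps\int W^{p+1} + O(\eps^{2-})$, whose sign requires $p>3$.

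Second, and more seriously, your claim that the second Pohozaev relation ``pins the scale down to $\mu=1+o(1)$'' does not work. Knowing $u_* \to W_\mu$ in $\dot H^1$ gives no control on $\|u_*\|_{L^2}$ (since $W\notin L^2$) and only indirect control on $\|u_*\|_{L^{p+1}}$; the relation $\omega\|u_*\|_{L^2}^2 = c\,\eps\|u_*\|_{L^{p+1}}^{p+1}$ tells you both sides tend to zero, but says nothing about $\mu$. The paper resolves this quite differently (Lemmas~\ref{minconv}--\ref{trivscale}): after the profile decomposition it applies a \emph{further} rescaling $\nu=1+o(1)$ chosen to enforce the orthogonality condition~\eqref{orth}, so that the rescaled minimizer falls into the $L^\infty$ ball where the contraction of Lemma~\ref{etalemma} applies and must therefore equal $Q_{\hat\eps}$ for the rescaled parameter $\hat\eps$. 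One is then left with the relation $\omega(\hat\eps)/\omega(\eps) = (\hat\eps/\eps)^{4/(5-p)}$, and the paper shows the only solution is $\hat\eps=\eps$ via a monotonicity lemma (Lemma~\ref{monotone}) proved earlier from the Lipschitz estimates on $\lambda(\eps,\eta)$. Your sketch omits both the orthogonality-restoring rescaling (without which the fixed-point uniqueness cannot be invoked) and the monotonicity argument (which is what actually forces the total scaling to be trivial).
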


\begin{remark}
It follows from Theorem~\ref{theorem2} that the
solitary wave profiles are positive: $Q_\eps(x) > 0$.
\end{remark}

\begin{remark}\label{range_remark}
(see Corollary \ref{range}). 
By scaling, for each $\eps > 0$ there is an interval 
$[\underline{\omega}, \infty) \ni \omega(\eps)$,
such that for $\omega \in [\underline{\omega}, \infty)$,
\begin{align*}
 Q(x) := \left( \frac{\eps}{\hat\eps} \right)^{\frac{1}{5-p}} 
 Q_{\hat\eps} \left(  \left( \frac{\eps}{\hat\eps} \right)^{\frac{2}{5-p}}  x \right),
\end{align*} 
where $0 < \hat\eps \leq \eps_0$ satisfies 
$(\omega(\hat\eps) / \omega) = \left(\hat\eps / \eps \right)^{4/(5-p)} $,
solves the corresponding minimization problem \eqref{minSp}.
Here the function $Q_{\hat\eps}$ is the solution constructed by Theorem \ref{theorem1} with $\hat\eps$ and $\omega(\hat\eps)$. 
\end{remark}

The proof of Theorem~\ref{theorem2} is presented in Section~\ref{variational}.
It is somewhat indirect. We first use the $Q = Q_\eps$ constructed in Theorem~\ref{theorem1} simply as test functions to verify
\[
  \mathcal{S}_{\eps,\omega(\eps)}(Q_\eps) < \E_0(W)
\]  
and so confirm, by standard methods, that the variational 
problems~\eqref{minSp} indeed admit minimizers.
By exploiting the unperturbed variational problem~\eqref{var0}, we
show these minimizers approach (up to rescaling) $W$ as $\eps \to 0$.
Then the local uniqueness provided by the fixed-point argument
from Theorem~\ref{theorem1} implies that the minimizers 
agree with $Q_\eps$. 
We remark that uniqueness of the analogous ground states in 
high space dimensions is established in the recent preprint~\cite{Slim3}.  

Finally, as in~\cite{Slim,Slim2}, we use the variational problem~\eqref{minSp} 
to characterize the dynamics
of radially-symmetric solutions of the 
perturbed critical Nonlinear Schr\"odinger equation
\begin{equation} \label{NLS}
  \left\{ \begin{array}{c}
  i \partial_t u = -\Delta u - |u|^4 u - \eps |u|^{p-1} u  \\
  u(x,0) = u_0(x) \in H^1(\R^3)
  \end{array} \right.
\end{equation}
`below the ground state', in the spirit of \cite{Kenig}.
By standard local existence theory (details in Section~\ref{dynamics}),
the Cauchy problem~\eqref{NLS} admits a unique solution
$u \in C([0, T_{max}); H^1(\R^3))$ on a maximal time interval,
and central questions are whether this solution
{\it blows-up in finite time} ($T_{max} < \infty$) or is {\it global}
($T_{max} = \infty$), and if global, does it {\it scatter} (to $0$) in the sense
\[
  \lim_{t \to \infty} \| u(\cdot,t) - e^{i t \Delta} \phi_+ \|_{H^1} = 0 
\]
for some $\phi_+ \in H^1(\R^3)$. 
We have: 
\begin{theorem} \label{dichotomy}
Let $3 < p < 5$ and $0 < \eps < \eps_0$, let $u_0 \in H^1(\R^3)$
be radially-symmetric, and satisfy
\[
  S_{\eps,\omega(\eps)} (u_0) < S_{\eps,\omega(\eps)} (Q_\eps),
\]
and let $u$ be the corresponding solution to~\eqref{NLS}:
\begin{enumerate}
\item
If $K_{\eps}(u_0) \geq 0$, $u$ is global, and scatters to $0$ as $t \to \infty$; 
\item 
if $K_{\eps}(u_0) < 0$, $u$ blows-up in finite time .
\end{enumerate}
\end{theorem}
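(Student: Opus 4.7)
The plan is to follow the concentration-compactness/rigidity scheme of Kenig-Merle~\cite{Kenig}, adapted to the present perturbed-critical setting as in~\cite{Slim,Slim2}, leaning on the variational characterization of $Q_\eps$ furnished by Theorem~\ref{theorem2}.

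The preliminary step is a coercivity-plus-invariance package derived directly from~\eqref{minSp}. Computing
\begin{equation*}
  \mathcal{S}_{\eps,\omega}(u) - \tfrac{1}{3}\mathcal{K}_\eps(u)
  = \tfrac{1}{6}\|\nabla u\|_{L^2}^2 + \tfrac{1}{6}\|u\|_{L^6}^6
  + \eps \tfrac{p-3}{2(p+1)}\|u\|_{L^{p+1}}^{p+1} + \tfrac{\omega}{2}\|u\|_{L^2}^2,
\end{equation*}
whose coefficients are all positive for $p > 3$, yields a uniform $H^1$ bound on the sublevel set $\{\mathcal{S}_{\eps,\omega} < \mathcal{S}_{\eps,\omega}(Q_\eps),\ \mathcal{K}_\eps \geq 0\}$. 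Since $Q_\eps$ minimizes $\mathcal{S}_{\eps,\omega}$ on $\{\mathcal{K}_\eps = 0\}$ by Theorem~\ref{theorem2}, continuity of the $H^1$-flow and conservation of $\mathcal{S}_{\eps,\omega}$ forbid $u(t)$ from crossing $\{\mathcal{K}_\eps = 0\}$ while remaining strictly below the threshold, so the two sets appearing in cases (1) and (2) are each forward-invariant. Sharpening the comparison with the minimizer gives the quantitative gaps $\mathcal{K}_\eps(u(t)) \gtrsim \|\nabla u(t)\|_{L^2}^2$ in case (1) and $\mathcal{K}_\eps(u(t)) \leq -\delta < 0$ in case (2), with constants depending only on $\mathcal{S}_{\eps,\omega}(Q_\eps) - \mathcal{S}_{\eps,\omega}(u_0)$.

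For case (1), the $H^1$ bound gives global existence, and scattering is obtained via the Kenig-Merle roadmap. Small-data scattering in $H^1$ follows from Strichartz estimates, with $|u|^4 u$ energy-critical and $\eps|u|^{p-1}u$ strictly subcritical. If scattering were to fail at some critical action level $\mathcal{S}_c \leq \mathcal{S}_{\eps,\omega}(Q_\eps)$, a linear profile decomposition for radial $H^1$ data together with nonlinear perturbation theory for the quintic NLS (absorbing $\eps|u|^{p-1}u$ as a subcritical perturbation) would extract a nonzero critical element $u_c$ whose $H^1$-orbit is precompact modulo radial scaling. Rigidity is then established via a truncated virial $V_R(t) = \int \chi(|x|/R)\,|x|^2|u_c(t)|^2\,dx$, for which $V_R''(t) = 8\mathcal{K}_\eps(u_c(t)) + \mathcal{E}(R,t)$ with the error controlled by radial Strauss decay and the compactness of the orbit; the coercive lower bound on $\mathcal{K}_\eps$ then forces $V_R''(t) \geq c > 0$ for $R$ large, contradicting the a priori bound $|V_R'(t)| \lesssim \|xu\|_{L^2}\|\nabla u\|_{L^2}$ on integrating over a long time interval.

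In case (2), the same truncated virial is run directly on $u$, now with the sign $\mathcal{K}_\eps(u(t)) \leq -\delta$: radial Strauss decay and conservation of mass allow $R$ to be chosen large enough that $V_R''(t) \leq -4\delta$ throughout the lifespan, and the standard convexity argument forces $T_{\max} < \infty$. The main obstacle I expect is the rigidity step in case (1) --- specifically, verifying that the stability/perturbation theory for the energy-critical quintic equation from~\cite{Kenig} is robust enough to absorb the subcritical term in the profile decomposition, and ruling out critical elements of self-similar type in which the compactness scale $N(t)$ is unbounded. Both are treated in the higher-dimensional analyses of~\cite{Slim,Slim2}; the radial Strauss inequality on $\R^3$ plays the same role as there, and Theorem~\ref{theorem1}'s description of $Q_\eps$ as an $L^r$-small perturbation of $W$ ensures that the near-threshold analysis reduces to the one already carried out in Kenig-Merle around $W$.
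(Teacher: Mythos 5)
Your skeleton is the same as the paper's: coercivity from $\mathcal{S}_{\eps,\omega}-c\,\mathcal{K}_\eps$ (the paper uses $c=\frac{2}{3(p-1)}$ in Section 5, equivalent to your $c=\frac13$ for the range at hand), invariance of the sublevel sets by conservation and continuity, the classical Ogawa--Tsutsumi/Nawa cut-off virial for blow-up in the $\mathcal{K}_\eps<0$ case, and a Kenig--Merle concentration-compactness/rigidity argument for the $\mathcal{K}_\eps\geq 0$ case via a radial profile decomposition, a minimal critical element, and a localized virial estimate, all patterned on~\cite{Slim,Slim2,KOPV}. So the route is correct and matches the paper.

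The one place your sketch is imprecise in a way that matters: you state the critical element's orbit is precompact in $H^1$ \emph{modulo radial scaling}, and you list "ruling out critical elements of self-similar type" as an open obstacle. In the present non-scale-invariant setting (and in~\cite{KOPV}), the scaling modulus is eliminated before the rigidity step: the orbit is precompact in $H^1$ outright. The mechanism is not, as you suggest, the fact that $Q_\eps$ is an $L^r$-small perturbation of $W$; it is the \emph{strict action inequality} $m_{\eps,\omega}=\mathcal{S}_{\eps,\omega}(Q_\eps)<\mathcal{E}_0(W)=m_{0,0}$ established in Lemma~\ref{eminimizer}. For a profile with scale $\lambda_n^j\to 0$, the subcritical perturbation $\eps|u|^{p-1}u$ scales away and the nonlinear evolution is well-approximated by the \emph{unperturbed} quintic NLS; since such a profile inherits action $<m^*<m_{0,0}$ and $\mathcal{K}_0>0$ from the decoupling, it falls under the Kenig--Merle scattering result, hence cannot survive as (part of) a critical element. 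This is why only the $\lambda_n^1\equiv 1$ case remains and the rigidity virial argument can be run with genuine $H^1$-precompactness, rather than having to contend with an unbounded compactness scale $N(t)$. Without this observation the rigidity step as you wrote it (controlling $V_R'$ and $V_R''$ uniformly in $t$) does not close.
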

Note that the conclusion is sharp in the sense that $Q_\eps$ itself is
a global but {\it non-scattering} solution.
Below the action of the ground state the sets where $K_\eps(u)>0$ and $K_\eps(u)<0$ are invariant under the equation (\ref{NLSeps}).  
Despite the fact that $K_\eps(u_0) >0$ gives an a priori bound on the $H^1$ norm of the solution, the local existence theory is insufficient (since we have the energy critical power) to give global existence/scattering, and so we employ concentration compactness machinery.  

The blow-up argument is classical, while
the proof of the scattering result rests on that of~\cite{Kenig}
for the unperturbed problem, with adaptations to handle
the scaling-breaking perturbation coming from~\cite{Slim,Slim2}
(higher-dimensional case) and~\cite{KOPV} (defocusing case).
This is given in Section~\ref{dynamics}.


\section{Construction of Solitary Wave Profiles}
\label{construction}

This section is devoted to the proof of Theorem~\ref{theorem1},
constructing solitary wave profiles for the perturbed NLS via
perturbation from the unperturbed static solution $W$.

\subsection{Mathematical Setup} \label{SetUp}

Let $\lambda^2=\omega$ with $\lambda \geq 0$. 
Now substitute (\ref{Qexp}) to (\ref{elliptic}) to see 
\begin{align*}
	(-\Delta -5W^4 + \lambda^2) \eta = -\lambda^2 W + \eps f(W) + N(\eta)
\end{align*}
where 
\begin{align*}
	N(\eta)= (W+\eta)^5 - W^5 -5W^4 \eta + \eps\left( f(W+\eta) - f(W) \right)
\end{align*}
collects the terms which are (at least) quadratic in $\eta$,
or linear in $\eta$ with a factor of $\eps$. 
We can rewrite the above as
\begin{align}  \label{preinvert}
  (H + \lambda^2) \eta = \mathcal{F},
  \qquad H = -\Delta + V, \qquad V = - 5 W^4  
\end{align}
where 
\[
  \mathcal{F}=\mathcal{F}(\eps,\lambda,\eta) = 
  -\lambda^2 W + \eps f(W) + N(\eta) .
\]
To understand the resolvent 
$(H + \lambda^2)^{-1}$ 
for small $\lambda$, we follow~\cite{Kato}. Use the resolvent identity to write 
\begin{align*}
  (H + \lambda^2)^{-1} = (1 + R_0(-\lambda^2) V)^{-1} R_0(-\lambda^2)
\end{align*}
where 
\[
  R_0(\zeta)=(-\Delta -\zeta)^{-1}
\] 
is the free resolvent, and apply Lemma 4.3 of \cite{Kato} to obtain the expansion 
\begin{align}\label{expansionspirit}
  (1 + R_0(-\lambda^2) V)^{-1} = 
-  
  \frac{1}{\lambda} \langle V \psi, \cdot \rangle \psi + O(1) 
\end{align}
where $\psi$ is the normalized resonance eigenfunction~\eqref{res_un_norm}:
\begin{align} \label{Vpsi_int}
  \psi(x) = \frac{1}{\sqrt{3 \pi}} \Lambda W(x), \qquad 
  \int_{\R^3} V \psi = \sqrt{4 \pi}.
\end{align}  
The above expansion is understood in \cite{Kato} in weighted Sobolev spaces. We choose instead to work in higher $L^p$ spaces. 
Precise statements are found in the following Section~\ref{res}. 

To eliminate the singular behaviour as $\lambda \to 0$ we require 
\begin{align}\label{orth}
	0=\langle R_0(-\lambda^2) V \psi, \mathcal{F}(\eps,\lambda,\eta) \rangle . 
\end{align}
Satisfying this condition determines $\lambda=\lambda(\eps, \eta)$. 
This is done in Section ~\ref{findinglambda}. 
With this condition met, we can invert (\ref{preinvert}) to see 
\begin{align}\label{eta}
	\eta = (H+\lambda^2)^{-1} \mathcal{F} 
	= (H + \lambda^2(\eps,\eta))^{-1} \mathcal{F}(\eps,\lambda(\eps,\eta),\eta)
	 =: \mathcal{G}(\eta,\eps),
\end{align}
which can be solved for $\eta$ via a fixed point argument. 
This is done in Section~\ref{findingeta}. 

\subsection{Resolvent Estimates
}  \label{res}

We collect here some estimates that are necessary for the proof of Theorem \ref{theorem1}. 


In order to apply Lemma 4.3 of \cite{Kato} and so to use the expansion (\ref{expansionspirit}) 
in what follows (Lemmas \ref{expansion} and \ref{fullresolvent}) we must have that the operator $H$ has no zero eigenvalue.
However, it is true that $H (\partial W / \partial x_j)=0$ for each $j=1,2,3$. 
To this end, we restrict ourselves to considering only radial functions. 
In this way $H$ has no zero eigenvalues and only the one resonance, 
$\Lambda W$ (as follows, e.g., from \cite[Lemma 5.2]{Duyc}).

The free resolvent 
operator $R_0(-\lambda^2)$ for $\lambda>0$ has integral kernel
\begin{align}\label{kernel}
	R_0(-\lambda^2)(x)= \frac{e^{-\lambda|x|}}{4 \pi |x|}. 
\end{align}	
An application of Young's inequality/generalized Young's inequality  gives the bounds
\begin{alignat}{2}
&\|R_0(-\lambda^2) \|_{L^q \to L^r} &&\aleq  \lambda^{3(1/q-1/r)-2}, \quad 1\leq q \leq r \leq \infty
 \label{kernelbddnw} \\
&\|R_0(-\lambda^2) \|_{L_w^q \to L^r} &&\aleq  \lambda^{3(1/q-1/r)-2}, \quad 1 < q \leq r < \infty
 \label{kernelbddw}
\end{alignat}
with $3(1/q-1/r)<2$,  
as well as 
\begin{align}\label{kernelbdd}
	\|R_0(-\lambda^2)\|_{L^q \to L^r} \aleq 1
\end{align}
where $1<q<3/2$ and $3(1/q-1/r)=2$ (so $3<r<\infty$).
We will also need the additional bound 
\begin{align}\label{kernelinfty}
\|R_0(-\lambda^2)\|_{L^{\frac{3}{2}-} \cap L^{\frac{3}{2}+} \to L^\infty} \aleq 1, 
\end{align}
where the $+/-$ means the bound holds for any exponent greater/less than $3/2$, 
to replace the fact that we do not have (\ref{kernelbdd}) for $r=\infty$ and $q=3/2$.

Observe also that $R_0(0)=G_0$ has integral kernel
\begin{align*}
	G_0(x)=\frac{1}{4 \pi|x|}
\end{align*}
and is formally $(-\Delta)^{-1}$. 

We need also some facts about the operator $(1+R_0(-\lambda^2)V)^{-1}$. 
The idea is that we can think of the full resolvent $(1+R_0(-\lambda^2)V)^{-1} R_0(-\lambda^2)$ as behaving like
the free resolvent $R_0(-\lambda^2)$ provided we have a suitable orthogonality condition. 
Otherwise we lose a power of $\lambda$ due to the non-invertibility of $(1+G_0V)$: 
indeed,
\begin{equation} \label{kernels}
  \psi \in \ker(1 + G_0 V), \quad
  V \psi \in \ker\left( (1 + G_0 V)^*  = 1 + V G_0 \right) .
\end{equation}

First we recall some results of \cite{Kato}: 
\begin{lemma} \emph{(Lemmas 2.2 and 4.3 from \cite{Kato})} \label{expansion}
Let $s$ satisfy $3/2 < s < 5/2$ and denote $\mathcal{B} = B(H^{1}_{-s},H^{1}_{-s})$
where $H^1_{-s}$ is the weighted Sobolev space with norm
\begin{align*}
\|u\|_{H^1_{-s}} = \|(1+|x|^2)^{-s/2}u\|_{H^1}.
\end{align*} 
Then for $\zeta$ with $\emph{Im} \zeta \geq 0$ we have the expansions 
\begin{align*}
	1 + R_0(\zeta)V = 1 + G_0 V + i \zeta^{1/2} G_1 V + o(\zeta^{1/2}) \\
	\left( 1 + R_0(\zeta) V \right)^{-1} = - i \zeta^{-1/2} \langle \cdot , V \psi \rangle \psi + C^1_0 + o(1)
\end{align*}
in $\mathcal{B}$ with $|\zeta| \to 0$. 
Here $C^1_0$ is an explicit operator and $G_0$ and $G_1$ are convolution with the kernels 
\begin{align*}
G_0(x) = \frac{1}{4 \pi |x|}, \quad G_1(x) = \frac{1}{4 \pi}. 
\end{align*}
\end{lemma}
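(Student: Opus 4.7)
The plan is to establish the two expansions directly from the explicit kernel of the free resolvent together with a Grushin-type (Lyapunov–Schmidt) reduction that quarantines the contribution of the resonance $\psi$. The restriction $3/2 < s < 5/2$ is exactly what is needed so that $\psi$ and $V\psi$ (which decay like $|x|^{-1}$ and $|x|^{-5}$ respectively) lie in $H^1_{-s}$ and its dual $H^1_s$, while the remainders in the free-resolvent expansion are still controllable.

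First I would expand the free resolvent. With $\text{Im}\,\zeta^{1/2}\ge 0$ the integral kernel is
\[
  R_0(\zeta)(x) = \frac{e^{i\zeta^{1/2}|x|}}{4\pi |x|}
  = \frac{1}{4\pi|x|} + \frac{i\zeta^{1/2}}{4\pi} + O(\zeta|x|),
\]
so $R_0(\zeta) = G_0 + i\zeta^{1/2} G_1 + o(\zeta^{1/2})$ as a bounded map between appropriate weighted Sobolev spaces. Combined with the decay of $V=-5W^4$ (which supplies enough integrability to absorb the weights), this gives the first expansion $1 + R_0(\zeta)V = 1 + G_0 V + i\zeta^{1/2} G_1 V + o(\zeta^{1/2})$ in $\mathcal{B} = B(H^1_{-s},H^1_{-s})$.

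Next I would invert, using the Grushin method to handle the fact that $A_0 := 1 + G_0 V$ is not invertible. Writing $P = \langle \cdot , V\psi\rangle\, \psi / \langle \psi, V\psi\rangle$ for a projection onto $\ker A_0 = \text{span}\{\psi\}$ along a complement matched to $\text{coker}\,A_0 = \text{span}\{V\psi\}$ (using the identifications in~\eqref{kernels}), I would set up the $2\times 2$ Grushin matrix
\[
\mathcal{A}(\zeta) = \begin{pmatrix} 1 + R_0(\zeta) V & \psi \\ \langle \cdot , V\psi\rangle & 0 \end{pmatrix}
\]
acting on $H^1_{-s}\oplus \C$. On the kernel direction it is invertible by construction, and the off-kernel block is a small perturbation of an invertible operator, so $\mathcal{A}(\zeta)^{-1}$ exists and depends smoothly on $\zeta^{1/2}$. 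Reading off $(1+R_0(\zeta)V)^{-1}$ from the Schur complement reduces the inversion to inverting the scalar $E_{-+}(\zeta)$, which to leading order is
\[
  E_{-+}(\zeta) = -\langle (1+G_0 V + i\zeta^{1/2}G_1 V)\psi, V\psi\rangle + o(\zeta^{1/2})
  = -i\zeta^{1/2}\langle G_1 V\psi, V\psi\rangle + o(\zeta^{1/2}),
\]
using $A_0\psi = 0$. Since $G_1$ is the constant kernel $1/(4\pi)$ and $\int V\psi = \sqrt{4\pi}$ by~\eqref{Vpsi_int}, we get $\langle G_1 V\psi, V\psi\rangle = (\int V\psi)^2/(4\pi) = 1$, so $E_{-+}(\zeta)^{-1} = i\zeta^{-1/2} + O(1)$. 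Propagating this back through the Schur formula produces exactly the singular term $-i\zeta^{-1/2}\langle \cdot, V\psi\rangle\psi$ with a remainder $C^1_0 + o(1)$.

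The main obstacle is bookkeeping the function spaces so that all the pairings make sense and all the remainder terms are genuinely $o(\zeta^{1/2})$ or $o(1)$ in $\mathcal{B}$; this is where the weight restriction $3/2<s<5/2$ is crucial (it is the range in which $\psi$ lies in $H^1_{-s}$ while $V\psi$ lies in $H^1_s$, and simultaneously the $O(\zeta|x|)$ tail of the resolvent kernel remains controllable). Once this is set up, the algebraic identification of the leading coefficient is essentially forced by the normalization of $\psi$ and the integral identity~\eqref{Vpsi_int}, and the remaining operator $C^1_0$ is the explicit Schur complement of $\mathcal{A}(0)$.
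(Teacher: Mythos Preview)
Your sketch is correct in spirit and follows the standard Jensen--Kato approach: expand the free resolvent kernel to first order in $\zeta^{1/2}$, then invert $1+R_0(\zeta)V$ via a Lyapunov--Schmidt (Grushin/Schur complement) reduction that isolates the one-dimensional obstruction coming from $\ker(1+G_0V)=\mathrm{span}\{\psi\}$. The computation $\langle G_1 V\psi, V\psi\rangle = (4\pi)^{-1}\big(\int V\psi\big)^2 = 1$ using the normalization~\eqref{Vpsi_int} is exactly what pins down the leading coefficient, and the weight window $3/2<s<5/2$ is indeed dictated by needing $\psi\in H^1_{-s}$ together with controllability of the $O(\zeta|x|)$ remainder against the decay of $V$.

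That said, you should be aware that the paper does \emph{not} give its own proof of this lemma: it is stated purely as a quotation of Lemmas~2.2 and~4.3 of Jensen--Kato~\cite{Kato}, with two remarks adjusting the weight range and the number of terms in the expansion to the present potential (which has only $|V|\lesssim\langle x\rangle^{-4}$ rather than the faster decay assumed in~\cite{Kato}). So there is nothing to compare against here beyond the original reference, and your outline is essentially a condensed version of that argument.
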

\begin{remark}
The expansion is also valid in $ B(L^2_{-s}, L^2_{-s})$ where $L^2_{-s}$ is the weighted $L^2$ space with norm
\begin{align*}
\|u\|_{L^2_{-s}} = \|(1+|x|^2)^{-s/2}u\|_{L^2}.
\end{align*}
\end{remark}
\begin{remark}
Since our potential only has decay 
$|V(x)| \aleq  \langle x \rangle^{-4}$ our expansion has one less term than in \emph{\cite{Kato}} and we use $3/2<s<5/2$ rather than $5/2<s<7/2$. 
\end{remark}

The following is a reformulation of Lemma \ref{expansion} but using higher $L^p$ spaces rather than weighted spaces. 
This reformulation was also used in \cite{Gust}. 
\begin{lemma} \label{fullresolvent}
Take $3<r\leq \infty$ and $\lambda>0$ small. Then
\begin{align*}
  \|(1 + R_0(-\lambda^2)V)^{-1} f \|_{L^r} \aleq \frac{1}{\lambda} \|f\|_{L^r}. 
\end{align*}
If we also have $\langle V \psi, f \rangle =0$ then
\begin{align*}
\|(1 + R_0(-\lambda^2)V)^{-1} f \|_{L^r} \aleq \|f\|_{L^r} 
\end{align*}
and 
\begin{align} \label{leading}
\| (1 + R_0(-\lambda^2)V)^{-1} f - \bar{Q} &(1+G_0 V)^{-1} \bar{P} f \|_{L^r}
\nonumber \\
&\lesssim
{\left\lbrace
\begin{array}{c}
\lambda^{1-3/r}, \; \ 3<r<\infty \\
\lambda \log(1/\lambda), \; \ r = \infty
\end{array}
\right\rbrace}
\|f\|_{L^r} 
\end{align}
where
\begin{align}
\begin{split} \label{projections}
P := \frac{1}{\int V \psi^2} \langle V \psi, \cdot \rangle \psi, \quad \bar{P} = 1-P \\
Q := \frac{1}{\int V \psi} \langle V, \cdot \rangle \psi, \quad \bar{Q} = 1-Q  
\end{split}
\end{align}
\end{lemma}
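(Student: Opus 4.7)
The plan is to translate the weighted-Sobolev expansion of Lemma~\ref{expansion} into $L^r$ estimates by bootstrapping through the defining equation $u = f - R_0(-\lambda^2) V u$ for $u := (1 + R_0(-\lambda^2) V)^{-1} f$. Inserting $\zeta = -\lambda^2$ (so $\zeta^{1/2} = i\lambda$) into Kato's expansion gives, in $H^1_{-s}$,
\begin{equation*}
  u \; = \; -\frac{1}{\lambda}\, \langle f, V\psi\rangle\, \psi \; + \; C_0^1 f \; + \; o(1)
  \qquad (\lambda \to 0^+),
\end{equation*}
so the singular piece of $(1 + R_0(-\lambda^2) V)^{-1}$ is the rank-one operator $-\lambda^{-1}\langle \cdot, V\psi\rangle\psi$. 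The key observation is that $V\psi$ is bounded and decays like $\langle x\rangle^{-5}$, so $V\psi \in L^{r'}$ for every $r > 3$, while $\psi \in L^r \cap L^\infty$ for such $r$; hence the rank-one piece acts on $L^r \to L^r$ with precisely the stated $1/\lambda$ loss.

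For the first bound, this rank-one term contributes $\lambda^{-1}\|f\|_{L^r}$, and the regular remainder is transferred from $H^1_{-s}$ to $L^r$ using $u = f - R_0(-\lambda^2) V u$: the rapid decay of $V$ places $Vu$ in a good $L^q$ space with $q < 3/2$, whereupon \eqref{kernelbdd}--\eqref{kernelinfty} return $R_0(-\lambda^2) V u \in L^r$ uniformly for small $\lambda$. For the second bound, the orthogonality $\langle V\psi, f\rangle = 0$ annihilates the singular term outright, leaving only the uniformly bounded regular part.

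For the leading-order refinement \eqref{leading}, I need to (i)~identify $C_0^1 = \bar Q (1 + G_0 V)^{-1} \bar P$ on the subspace $\{\langle V\psi, f\rangle = 0\}$, and (ii)~upgrade Kato's $o(1)$ to the explicit rate. For (i), the relations \eqref{kernels} say that $1 + G_0 V$ has one-dimensional kernel $\operatorname{span}(\psi)$ and cokernel $\operatorname{span}(V\psi)$; the projection $\bar P$ maps $L^r$ into the range of $1 + G_0 V$, and $\bar Q$ selects the unique representative of $(1 + G_0 V)^{-1}\bar P f$ modulo $\operatorname{span}(\psi)$, matching the formula in \eqref{leading}. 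For (ii), I would Taylor-expand
\begin{equation*}
  R_0(-\lambda^2)(x) - G_0(x) \; = \; \frac{e^{-\lambda|x|}-1}{4\pi|x|} \; = \; -\frac{\lambda}{4\pi} + O(\lambda^2 |x|)
\end{equation*}
one order past the leading $\lambda$-term and insert into the resolvent identity, expanding on $\operatorname{Ran}(\bar P)$ via a Neumann series. The resulting error is controlled by the $L^r \to L^r$ operator norm of $(R_0(-\lambda^2) - G_0) V$, which \eqref{kernelbddnw}--\eqref{kernelbddw} give at rate $\lambda^{1-3/r}$ for $3 < r < \infty$; the endpoint $r = \infty$ forces the borderline $q = 3/2$ where \eqref{kernelbdd} fails, and substituting the weaker \eqref{kernelinfty} costs the logarithmic factor $\lambda \log(1/\lambda)$.

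The main obstacle is the quantitative refinement (ii): Kato's lemma delivers only qualitative $o(1)$ remainders, whereas \eqref{leading} demands a sharp power-law rate, and the endpoint $r = \infty$ introduces a logarithmic correction that must be tracked through the borderline kernel estimate \eqref{kernelinfty}. Secondarily, the explicit identification of $C_0^1$ with $\bar Q (1 + G_0 V)^{-1} \bar P$ requires verifying constants so that the singular term in Kato's expansion aligns precisely with the projections from \eqref{projections}.
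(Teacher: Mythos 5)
Your treatment of the first two bounds is essentially the paper's: iterate $u = f - R_0(-\lambda^2)Vu$, use the decay of $V$ to pass to a good Lebesgue space, and transfer Kato's weighted-space expansion via $L^r \subset L^2_{-2}$. That part is sound.

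The leading-order refinement \eqref{leading} is where your plan has a genuine gap. You propose to run a Neumann series for $(1+R_0(-\lambda^2)V)^{-1}$ around $(1+G_0V)^{-1}$ on $\mathrm{Ran}\,\bar P$, controlling the error by $\|(R_0(-\lambda^2)-G_0)V\|_{L^r\to L^r}$. This is not well set up: $(1+G_0V)^{-1}$ is only defined as a map $L^r\cap(V\psi)^\perp \to L^r\cap V^\perp$, and the perturbation $(R_0(-\lambda^2)-G_0)V$ does not preserve $(V\psi)^\perp$, so the series does not stay in the domain where $(1+G_0V)^{-1}$ makes sense. More importantly, the statement that ``$\bar Q$ selects the unique representative modulo $\mathrm{span}(\psi)$'' is bookkeeping, not a bound: you still must show that the $\psi$-component of $(1+R_0(-\lambda^2)V)^{-1}f - (1+G_0V)^{-1}\bar P f$ is small, and this is precisely the content the paper supplies in the inner-product estimate $|\langle V, (1+R_0(-\lambda^2)V)^{-1}f\rangle| \lesssim \lambda\,(\text{or }\lambda\log(1/\lambda))\,\|f\|_{L^r}$. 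That estimate in turn relies on the algebraic identities $(1+VG_0)(V\psi)=0$ and $\int V\psi=\sqrt{4\pi}$, combined with an $L^q$ bound on $h=V\tilde R V\psi$ where $\tilde R$ is the second-order Taylor remainder of the resolvent kernel. Your proposal contains no analogue of this step.

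Relatedly, your diagnosis of the endpoint logarithm is off. The $\lambda\log(1/\lambda)$ at $r=\infty$ does \emph{not} come from substituting \eqref{kernelinfty} for the failing \eqref{kernelbdd} in the operator-norm estimate for $(R_0(-\lambda^2)-G_0)V$; that kernel estimate gives $\lambda^{1-3/r}$ by Young's inequality with no log. The logarithm enters entirely through $\|h\|_{L^1}$ in the inner-product estimate, since the dual exponent of $r=\infty$ is $r'=1$, and $h=V\tilde R V\psi$ is only logarithmically controlled in $L^1$. To salvage your approach you would need to introduce this inner-product estimate (or an equivalent device) rather than attempt to push Kato's $o(1)$ expansion to a quantitative second-order form.
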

\begin{remark}
Since $V, \; V \psi \in L^1 \cap L^\infty$, the ``inner-product" 
$\langle V \psi, f \rangle$ makes sense
as a $L^{r'}$-$L^{r}$ duality pairing for $f \in L^r$, 
as do the actions of $P$ and $Q$ on $L^r$.
\end{remark}
\begin{remark}
The (non-self-adjoint) projections $P$ and $Q$ play a ``bookkeeping" role in~\eqref{leading}. Indeed, if $\langle V \psi, f \rangle = 0$, then 
simply $\bar{P} f = f$. The $\bar{P}$ is included to emphasize that 
$(1 + G_0 V)^{-1}$ is well-defined only on the subspace 
$L^r \cap (V \psi)^{\perp} = Ran \bar{P}$ (see proof below), and so any 
projection with $Ker P = (V \psi)^{\perp}$ would suffice. The particular choice
above is natural in the sense $Ran P = \langle \psi \rangle = Ker (1 + G_0 V)$.
Similarly, the $\bar{Q}$ indicates that 
$Ran(1 + G_0 V)^{-1} = L^r \cap V^{\perp} = Ran \bar{Q}$,
a consequence of estimate~\eqref{innerest} below.
\end{remark}
\begin{proof}
We start with the identity 
\begin{align*}
g :=(1 + R_0(-\lambda^2)V)^{-1} f 
&= f - R_0(-\lambda^2)V (1 + R_0(-\lambda^2)V)^{-1} f \\
&= f - R_0(-\lambda^2)V g 
\end{align*}
so 
\begin{align*}
\| g \|_{L^r} 
&\aleq \| f \|_{L^r} + \|R_0(-\lambda^2)V g \|_{L^r}.
\end{align*}
We treat the above second term in two cases. For $3<r<\infty$ let $1/q=1/r+2/3$ and use (\ref{kernelbdd}) and 
for $r=\infty$ use (\ref{kernelinfty}) 
\begin{align*}
\|R_0(-\lambda^2)V g \|_{L^r}
& \aleq  \left\{ 
\begin{array}{l} 
\|V g \|_{L^q}, \quad 3<r<\infty \\
\|V g \|_{L^{3/2^-} \cap L^{3/2^+}}, \quad r=\infty
\end{array}
\right. \\
& \aleq  \left\{ 
\begin{array}{l} 
\|V\langle x \rangle^{2}\|_{L^m} \|  g \|_{L^2_{-2}}, \quad 3<r<\infty \\
\|V\langle x \rangle^{2}\|_{L^{6^-} \cap L^{6^+}} \|  g \|_{L^2_{-2}}, \quad r=\infty 
\end{array}
\right. \\
&\aleq \| g \|_{L^2_{-2}}. 
\end{align*}
Here we used that 
$|V(x)| \aleq \langle x \rangle^{-4}$, and with 
$1/q=1/m+1/2$ we have $(4-2)m>3$. 
Finally we appeal to Lemma \ref{expansion} and use the fact that $L^r \subset L^{2}_{-2}$ to see 
\begin{align*}
\|R_0(-\lambda^2)V g \|_{L^r} \aleq \|(1 + R_0(-\lambda^2)V)^{-1} f \|_{L^2_{-2}} 
\aleq \frac{1}{\lambda} \|f\|_{L^2_{-2}}
\aleq \frac{1}{\lambda} \|f\|_{L^r}
\end{align*}
where we can remove the factor of $1/\lambda$ if our orthogonality condition is satisfied. 

In light of~\eqref{kernels},
\[
  1 + G_0 V : L^r \cap V^{\perp} \to L^r \cap (V \psi)^{\perp}
\]
is bijective, and so we treat the operator $(1+G_0V)^{-1}$ as acting
\[
  (1 + G_0 V)^{-1} : L^r \cap (V\psi)^{\perp} \rightarrow L^r \cap V^{\perp},
\]  
which is the meaning of the expression 
$\bar{Q} (1 + G_0 V)^{-1} \bar{P}$ involving the projections
$\bar{P}$ and $\bar{Q}$.
That the range should be taken to be $V^{\perp}$ is a consequence
of estimate~\eqref{innerest} below. 

To prove~\eqref{leading}, expand
\[
\begin{split}
  & R_0(-\lambda^2) = G_0 - \lambda G_1 + \lambda^2 \tilde{R}, \\
  & \tilde{R} := \frac{1}{\lambda^2} \left( R_0(-\lambda^2) - G_0 + \lambda G_1 \right)
   = \frac{1}{\lambda} \left( \frac{e^{-\lambda|x|} - 1 + \lambda|x|}{4 \pi \lambda|x|}
   \right) \ast
\end{split}  
\]
and consider $f \in (V \psi)^{\perp} \cap L^r$ with $3<r\leq \infty$. 
We first establish the estimates
\begin{align}\label{hest}
&\|h\|_{L^q}
\lesssim 
\begin{cases}
1, \quad 1<q<\infty\\
\log(1/\lambda), \quad q=1
\end{cases},
\qquad h := V \tilde{R} V \psi ,
 \\ 
&|\langle V, (1+R_0(-\lambda^2)V)^{-1}f \rangle | \lesssim 
{\left\lbrace
\begin{array}{c}
\lambda, \quad 3<r<\infty \\
\lambda \log(1/\lambda),\quad r = \infty
\end{array}
\right\rbrace}
\|f\|_{L^r} \label{innerest}
\end{align}
where, recall, the inner product in~\eqref{innerest} is interpreted
as an $L^{r'}$-$L^r$ duality pairing.
For the purpose of estimate~\eqref{hest}, we may make the following replacements: 
$V\psi \to \langle x \rangle^{-5}$, 
$V \to \langle x \rangle^{-4}$, 
and $\tilde{R}(x) \to \min(|x|,1/\lambda)$. 
To establish ($\ref{hest}$) we must therefore estimate 
\begin{align*}
\langle x \rangle^{-4} \int_{\R^3} \min(|y|, 1/\lambda) \langle y-x \rangle^{-5} dy,  
\end{align*}
and we proceed in two parts: 
\begin{itemize}
\item Take $|y|\leq 2|x|$. Then
\begin{align*}
\langle x \rangle^{-4} \int_{|y|\leq 2|x|} &\min(|y|, 1/\lambda) \langle y-x \rangle^{-5} dy \\
&\lesssim \langle x \rangle^{-4} \min(|x|, 1/\lambda)\int  \langle y-x \rangle^{-5} dy \\
&\lesssim \langle x \rangle^{-4} \min(|x|, 1/\lambda)
\end{align*}
and 
\begin{align*}
\|\langle x \rangle^{-4} &\min(|x|, 1/\lambda)\|_{L^q}^q \\
&\lesssim \int_0^1 r^{q+2} dr + \int_1^{1/\lambda} r^{-3q+2} dr + \frac{1}{\lambda} \int_{1/\lambda}^\infty r^{-4q+2}dr\\
&\lesssim 1 + 
{\left\lbrace
\begin{array}{c}
1, \quad q>1 \\
\log(1/\lambda), \quad q=1
\end{array}
\right\rbrace}
+ \lambda^{4(q-1)} \\
&\lesssim 
\begin{cases}
1, \quad q>1 \\
\log(1/\lambda), \quad q=1
\end{cases}
.
\end{align*}
\item Take $|y|\geq 2|x|$. Then 
\begin{align*}
\langle x \rangle^{-4} \int_{|y|\geq 2|x|} \min(|y|, 1/\lambda) \langle y-x \rangle^{-5} dy 
\lesssim
\langle x \rangle^{-4} \int |y| \langle y \rangle^{-5} dy 
\lesssim
\langle x \rangle^{-4}
\end{align*}
and 
\begin{align*}
\|\langle x \rangle^{-4}\|_{L^q} \lesssim 1 . 
\end{align*}
\end{itemize}

With (\ref{hest}) established we now prove (\ref{innerest}). 
Let $g= (1 + R_0(-\lambda^2)V)^{-1}f$ and observe 
\begin{align*}
0 &= \frac{1}{\lambda} \langle V \psi , f \rangle \\
&= \frac{1}{\lambda} \langle V \psi, (1 + R_0(-\lambda^2)V)g \rangle \\
&= \frac{1}{\lambda} \langle (1 + VR_0(-\lambda^2))(V \psi), g \rangle \\
&= \frac{1}{\lambda} \langle (1 + V(G_0-\lambda G_1 + \lambda^2 \tilde{R}) )(V \psi), g \rangle \\
&= \langle (  -VG_1 + \lambda V \tilde{R} )(V \psi), g \rangle \\
&= -\frac{1}{\sqrt{4\pi}} \langle V, g \rangle + \lambda \langle h, g \rangle 
\end{align*}
noting that $(1+VG_0)(V\psi)=0$ and using (\ref{Vpsi_int}). Now
\begin{align*}
|\langle V, g \rangle| \lesssim \lambda \|h\|_{L^{r'}} \|g\|_{L^r}
\lesssim \lambda 
{\left\lbrace
\begin{array}{c}
1, \quad 3<r<\infty \\
\log(1/\lambda), \quad r=\infty
\end{array}
\right\rbrace}
\|f\|_{L^r}
\end{align*}
applying (\ref{hest}).

With (\ref{innerest}) in place we finish the argument. For $f \in L^r \cap (V\psi)^{\perp}$
we write
\begin{align*}
g = (1+R_0(-\lambda^2)V)^{-1} f \quad \text{ and } \quad g_0 = (1 + G_0V)^{-1} f.
\end{align*}
We have
\begin{align*}
0 = (1 + R_0(-\lambda^2)V) g - (1 + G_0V)g_0 
\end{align*}
and so 
\begin{align*}
(1+G_0V)(g-g_0) = - \hat R Vg
\end{align*}
where $\hat R = R_0(-\lambda^2) - G_0$. 
The above also implies $\hat R Vg \perp V \psi$. 
We invert to see
\begin{align*}
g-g_0 = - (1+ G_0 V)^{-1} \hat R Vg + \alpha \psi
\end{align*}
noting that $\psi \in \ker(1+G_0V)$. Take now inner product with $V$ to see
\begin{align*}
\alpha \langle V, \psi \rangle = \langle V, g \rangle
\end{align*}
and so 
\begin{align*}
|\alpha| \lesssim |\langle V, g \rangle |
\lesssim 
{\left\lbrace
\begin{array}{c}
\lambda, \quad 3<r<\infty \\
\lambda \log(1/\lambda),\quad r = \infty
\end{array}
\right\rbrace}
\|f\|_{L^r}
\end{align*}
observing (\ref{innerest}). 
It remains to estimate 
$(1+G_0V)^{-1} \hat R Vg$. 
We note that 
\begin{align*}
\hat R = \left( \frac{ e^{-\lambda|x|}-1}{4 \pi |x|} \right) \ast
\end{align*}
and so for estimates we may replace $\hat R(x)$ with 
$\min ( \lambda, 1/|x| )$. 
There follows 
by Young's inequality 
\begin{align*}
\|(1+G_0V)^{-1} \hat R Vg\|_{L^r}
&\lesssim \|\hat R Vg\|_{L^r} \\
&\lesssim \|\min ( \lambda, 1/|x| ) \|_{L^r} \|Vg\|_{L^1} \\
&\lesssim \|\min ( \lambda, 1/|x| ) \|_{L^r} \|g\|_{L^r} \\
& \lesssim \lambda^{1-3/r} \|f\|_{L^r}. 
\end{align*}
And so after putting everything together we obtain~\eqref{leading}.
\end{proof}

We end this section by recording pointwise 
estimates of the nonlinear terms 
\begin{align*}
	N(\eta) &= (W+\eta)^5 - W^5 -5W^4 \eta + \eps\left( f(W+\eta) - f(W) \right) .\\
\end{align*}
Bound the first three terms as follows: 
\begin{align*}
|(W+\eta)^5 - W^5 -5W^4 \eta | \aleq  W^3 \eta^2 +  |\eta|^5 .
\end{align*}
For the other term we use the Fundamental Theorem of Calculus 
and Assumption~\ref{fassump} 
to see
\begin{align*} 
|f(W+\eta)-f(W)| 
&= \left| \int_0^1 \partial_\delta f(W+\delta \eta) d \delta \right|  \\
&=\left| \int_0^1  f'(W+\delta \eta) \eta d \delta \right|  \\
& \aleq  |\eta| \sup_{0 < \delta <1} \left(  |W+\delta \eta|^{p_1-1}  + |W+\delta \eta|^{p_2-1} \right)\\
& \aleq  |\eta| \left( W^{p_1-1} + |\eta|^{p_1-1} + W^{p_2-1} + |\eta|^{p_2-1} \right) \\
& \aleq  |\eta| \left(W^{p_1-1} + W^{p_2-1} \right) +  |\eta|^{p_1} +  |\eta|^{p_2}
\end{align*}
and so together we have 
\begin{align} \label{nonbdd}
|N(\eta)| \aleq
 W^3 \eta^2+|\eta|^5 
+ \eps |\eta| \left(W^{p_1-1} + W^{p_2-1} \right) +  \eps |\eta|^{p_1} + \eps |\eta|^{p_2}. 
\end{align}
Similarly 
\begin{align*}
	&|f(W
	+\eta_1) - f(W+ \eta_2)|  \\
	& \quad \quad \aleq |\eta_1-\eta_2| 
\left( W^{p_1-1} + |\eta_1|^{p_1-1} + |\eta_2|^{p_1-1} + W^{p_2-1} 
+ |\eta_1|^{p_2-1} + |\eta_2|^{p_2-1} \right) 
\end{align*}
and so
\begin{align} \label{nondiff}
|N(\eta_1) - N(\eta_2)| & \aleq  
 |\eta_1-\eta_2| (|\eta_1|+|\eta_2|) W^3 +  |\eta_1-\eta_2| (|\eta_1|^4+|\eta_2|^4)
\nonumber \\
& \quad + \eps |\eta_1 - \eta_2| \left( W^{p_1-1} + W^{p_2-1} \right) \nonumber \\
& 
\quad + \eps |\eta_1 - \eta_2| \left( |\eta_1|^{p_1-1} + |\eta_2|^{p_1-1} + 
|\eta_1|^{p_2-1} + |\eta_2|^{p_2-1} \right)
.
\end{align}

\subsection{Solving for the Frequency
}\label{findinglambda}

We are now in a position to construct solutions to (\ref{elliptic}) and so prove Theorem \ref{theorem1}. 
The proof proceeds in two steps. 
In the present section, we will solve for $\lambda$ in (\ref{orth}) 
for a given small $\eta$. 
Then in the following Section~\ref{eta},
we will treat $\lambda$ as a function of $\eta$ and solve (\ref{eta}). 
Both steps involve fixed point arguments.

We begin by computing the inner product (\ref{orth}). Write 
\begin{align*}
	0=\langle R_0(-\lambda^2) V \psi,\mathcal{F} \rangle
	=\langle R_0(-\lambda^2) V \psi,-\lambda^2 W + \eps f(W)  + N(\eta) \rangle
\end{align*}
so that
\begin{align}\label{lambda}
	\lambda \cdot \lambda \langle R_0(-\lambda^2) V \psi, W \rangle=
	\eps \langle R_0(-\lambda^2) V \psi,f(W) \rangle
	+\langle R_0(-\lambda^2) V \psi,N(\eta) \rangle. 
\end{align}
It is our intention to find a solution $\lambda$ of (\ref{lambda}) of the appropriate size. 
This is done in Lemma \ref{lambdalemma} but we first make some estimates on the leading order inner products appearing above. 

\begin{lemma}

We have the estimates 
\begin{align}
&\langle R_0(-\lambda^2) V \psi,f(W) \rangle =  -\langle \psi, f(W) \rangle + O( \lambda^{\delta_1}) \label{inner1} \\
&\lambda \langle R_0(-\lambda^2) V \psi, W \rangle = 2\sqrt{3 \pi} + O(\lambda) \label{inner2} 
\end{align}
where $\delta_1$ is defined in the statement of Theorem \ref{theorem1}. 
\end{lemma}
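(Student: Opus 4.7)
The plan is to handle the two inner products separately, in each case using the identity $G_0 V \psi = -\psi$ (i.e.\ $\psi \in \ker(1+G_0V)$) together with the expansion $R_0(-\lambda^2) = G_0 - \lambda G_1 + \lambda^2 \tilde R$ already introduced in the proof of Lemma~\ref{fullresolvent}.

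For \eqref{inner1}, I would first derive the clean identity
\[
  R_0(-\lambda^2) V \psi \;+\; \psi \;=\; \lambda^2 R_0(-\lambda^2) \psi,
\]
by applying $(-\Delta + \lambda^2)$ to both sides and using $H\psi = 0$, so that $-\Delta\psi = -V\psi$. Pairing with $f(W)$ reduces the claim to showing
\[
  \lambda^2 \, \langle R_0(-\lambda^2)\psi, f(W)\rangle \;=\; O(\lambda^{\delta_1}).
\]
I would move $R_0(-\lambda^2)$ to the other side by self-adjointness and apply Hölder with $\psi \in L^r$ ($r>3$, since $\psi \sim |x|^{-1}$) against $R_0(-\lambda^2) f(W) \in L^{r'}$ ($r' < 3/2$). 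The mapping property \eqref{kernelbddnw} gives $\|R_0(-\lambda^2)\|_{L^q \to L^{r'}} \lesssim \lambda^{3(1/q-1/r')-2}$. When $2 < p_1 \leq 3$, choose $q$ slightly above $3/p_1$ (permissible since $f(W) \in L^q$ for all such $q$) and $r'$ slightly below $3/2$, so that $3(1/q-1/r')$ is arbitrarily close to $p_1-2$; this yields the bound $\lambda^{p_1-2-\delta}$ for any small $\delta>0$. When $p_1 > 3$, choose $q=1$ to obtain $\lambda^{1-\delta}$. Combining gives $O(\lambda^{\delta_1})$ for every $\delta_1 < \min(1, p_1-2)$.

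For \eqref{inner2}, self-adjointness again gives $\langle R_0(-\lambda^2) V\psi, W\rangle = \langle V\psi, R_0(-\lambda^2) W\rangle$, and the essential observation is that $-\Delta W = W^5$, hence $W = R_0(-\lambda^2)(W^5 + \lambda^2 W)$, so
\[
  R_0(-\lambda^2) W \;=\; \lambda^{-2}\bigl( W - R_0(-\lambda^2) W^5 \bigr).
\]
Since $W^5 \in L^1$, I can legitimately expand $R_0(-\lambda^2)W^5 = G_0 W^5 - \lambda G_1 W^5 + \lambda^2 \tilde R W^5$, using the crucial facts $G_0 W^5 = W$ and $G_1 W^5 = \frac{1}{4\pi}\int W^5 = \sqrt{3}$ (the value of the integral is a direct computation with $W(x) = (1+|x|^2/3)^{-1/2}$). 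This collapses the formula to
\[
  R_0(-\lambda^2) W \;=\; \frac{\sqrt 3}{\lambda} \;-\; \tilde R W^5,
\]
and pairing with $V\psi$ and multiplying by $\lambda$ yields
\[
  \lambda \langle V\psi, R_0(-\lambda^2) W\rangle
  \;=\; \sqrt 3 \, \langle V\psi, 1\rangle \;-\; \lambda\langle V\psi, \tilde R W^5\rangle
  \;=\; \sqrt 3 \cdot \sqrt{4\pi} \;-\; \lambda \langle V\psi, \tilde R W^5\rangle,
\]
using $\int V\psi = \sqrt{4\pi}$ from \eqref{Vpsi_int}. The leading constant is exactly $2\sqrt{3\pi}$.

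The remaining task — and the only slightly delicate step — is to confirm $|\langle V\psi, \tilde R W^5\rangle| \lesssim 1$ uniformly in $\lambda$. The kernel bound $\tilde R(x) \lesssim \min(|x|,1/\lambda)$ from the proof of Lemma~\ref{fullresolvent}, together with $|V\psi(y)| \lesssim \langle y\rangle^{-5}$ and $W^5(z) \lesssim \langle z\rangle^{-5}$, reduces this to checking that $\iint \langle y\rangle^{-5}(|y|+|z|)\langle z\rangle^{-5}\,dy\,dz < \infty$, which is immediate from $|y| \langle y \rangle^{-5}, |z|\langle z\rangle^{-5} \in L^1$. The main conceptual obstacle is organizing the Hölder exponents in the proof of \eqref{inner1} so that one simultaneously gets a positive power of $\lambda$ and a finite $L^q$ norm of $f(W)$; the interval $q \in (3/p_1, 3/2)$ being nonempty exactly because $p_1 > 2$ is what makes everything fit.
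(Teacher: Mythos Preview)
Your proof is correct. For \eqref{inner1} you arrive at exactly the same remainder term $\lambda^2\langle R_0(-\lambda^2)\psi, f(W)\rangle$ as the paper does (via the resolvent identity $R_0(-\lambda^2)-G_0=-\lambda^2 R_0(-\lambda^2)G_0$ applied to $V\psi$); the only cosmetic difference is that you place $R_0$ on $f(W)$ and use \eqref{kernelbddnw}, whereas the paper keeps $R_0$ on $\psi$ and invokes the weak-type bound \eqref{kernelbddw} with $\psi\in L^3_w$. Both give the same $O(\lambda^{\delta_1})$.

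For \eqref{inner2} your route is genuinely different and considerably cleaner. The paper splits $W=\sqrt 3/|x|+\tilde W$, handles $\tilde W$ by \eqref{kernelbdd}, and for the singular $1/|x|$ piece passes to the Fourier side, writing $\hat g(\xi)=\hat g(0)+(\hat g(\xi)-\hat g(0))$ with $g=V\psi$ and then performing a careful high/low frequency remainder estimate that requires a further Taylor expansion of $\hat g$ near the origin. Your argument instead exploits the structural identity $-\Delta W=W^5$ to write $R_0(-\lambda^2)W=\lambda^{-2}\bigl(W-R_0(-\lambda^2)W^5\bigr)$ and then expands $R_0(-\lambda^2)$ acting on the well-localized $W^5\in L^1$; the cancellation $G_0W^5=W$ kills the divergent $\lambda^{-2}$ term and the explicit value $\int W^5=4\pi\sqrt 3$ delivers the constant directly. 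This avoids all Fourier analysis and reduces the remainder to a single elementary double integral. The trade-off is that the paper's approach uses only the $1/|x|$ tail of $W$ and the decay of $V\psi$, so it would transfer unchanged to any resonance problem with those asymptotics, while yours leans on the specific equation satisfied by $W$---but in the present setting it is shorter and more transparent.
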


\begin{proof}

Firstly
\begin{align*}
\langle R_0(-\lambda^2) V \psi,f(W) \rangle = \langle G_0 V \psi,f(W) \rangle + \langle (R_0(-\lambda^2)-R_0(0)) V \psi,f(W) \rangle
\end{align*}
First note that since $H \psi =0$ we have $V \psi = -(-\Delta \psi)$ so 
\begin{align*}
\langle G_0 V \psi,f(W) \rangle = \langle -(-\Delta)^{-1}(-\Delta \psi),f(W) \rangle = -\langle \psi,f(W) \rangle.
\end{align*}
Note that this inner product is finite. 
For the other term use the resolvent identity $R_0(-\lambda^2) - R_0(0)= -\lambda^2 R_0(-\lambda^2) R_0(0) $
to see
\begin{align*}
\langle (R_0(-\lambda^2)-R_0(0)) V \psi,f(W) \rangle = \lambda^2 \langle R_0(-\lambda^2) \psi, f(W) \rangle .
\end{align*}
Observe now that 
\begin{align*}
\lambda^2 | \langle R_0(-\lambda^2) \psi, f(W) \rangle | 
\leq \lambda^2 \| R_0(-\lambda^2) \psi\|_{L^r} \|f(W)\|_{L^{r^*}}
\end{align*}
where $1/r+1/r^*=1$. 
Choose an $r^*>1$ with $3/p_1 < r^* <3/2$. In this way $f(W) \in L^{r^*}$ observing Assumption \ref{fassump}. 
We now apply (\ref{kernelbddw}) with $q=3$ noting that $3 < r < \infty$. Hence
\begin{align*}
\lambda^2 | \langle R_0(-\lambda^2) \psi, f(W) \rangle | 
&\aleq \lambda^2 \cdot \lambda^{3(1/3-1/r)-2} \|\psi\|_{L_w^3} \|f(W)\|_{L^{r^*}}\\
&\aleq \lambda^{1-3/r}. 
\end{align*}
If $p_1 \geq 3$ we can take $r$ as large as we like. 
Otherwise we must take $3 < r < 3/(3-p_1)$ and so $1-3/r$ can be made close to $p_1-2$ (from below). 
We now see (\ref{inner1}). 

Next on to (\ref{inner2}). Note that this computation is taken from~\cite{Gust}. 
First we isolate the troublesome part of $W$ and write 
\[
  W = \frac{\sqrt{3}}{|x|}  + \tilde{W}. 
\]
There is no problem with the second term since 
$\tilde{W} \in L^{6/5}$ and $V\psi \in L^{6/5}$ so we can use (\ref{kernelbdd}) with $q=6/5$ and $r=6$ to see 
\begin{align} 
\lambda |\langle R_0(-\lambda^2) V \psi, \tilde{W} \rangle | 
&\aleq \lambda \|R_0(-\lambda^2)V\psi\|_{L^6} \|\tilde{W}\|_{L^{6/5}} \nonumber \\
& \aleq \lambda \|V \psi\|_{L^{6/5}} \|\tilde{W}\|_{L^{6/5}} \\
& \aleq \lambda. \label{Wtilde}
\end{align}
Set $g := V \psi$ and 
concentrate on 
\begin{align*}
\lambda \sqrt{3} \left\langle R_0(-\lambda^2) g ,  \frac{1}{|x|} \right\rangle 
= \sqrt{\frac{6}{\pi}} \lambda \left\langle \frac{\hat{g}(\xi)}{ |\xi|^2 + \lambda^2} , \frac{1}{|\xi|^2} \right\rangle
\end{align*}
where we work on the Fourier Transform side,
using Plancherel's theorem. So
\begin{align*}
& \sqrt{\frac{6}{\pi}} \lambda \left\langle \frac{\hat{g}(\xi)}{ |\xi|^2 + \lambda^2} , \frac{1}{|\xi|^2} \right\rangle \\
& \quad \quad \quad =  \sqrt{\frac{6}{\pi}} \lambda \; \hat{g}(0) \left\langle \frac{1}{|\xi|^2 + \lambda^2} , \frac{1}{|\xi|^2} \right\rangle
+  \sqrt{\frac{6}{\pi}} \lambda \left\langle \frac{\hat{g}(\xi) - \hat{g}(0)}{|\xi|^2 + \lambda^2} , \frac{1}{|\xi|^2} \right\rangle
\end{align*}
where the first term is the leading order. We invert the Fourier Transform and note that 
$\hat{g}(0) = (2 \pi)^{-\frac{3}{2}} \int g$ to see
\begin{align*}
 \sqrt{\frac{6}{\pi}} \lambda \; \hat{g}(0) \left\langle \frac{1}{|\xi|^2 + \lambda^2} , \frac{1}{|\xi|^2} \right\rangle
&= \sqrt{3} \left( \int g \right) \lambda
\left\langle \frac{e^{-\lambda|x|}}{4 \pi |x|}, \frac{1}{|x|} \right\rangle \\
&= \sqrt{3} \int g = 2 \sqrt{3 \pi}.
\end{align*}
We now must bound the remainder term. It is easy for the high frequencies 
\begin{align*}
\int_{|\xi| \geq 1} \frac{|\hat{g}(\xi) - \hat{g}(0)|}{|\xi|^2(|\xi|^2 + \lambda^2)} d\xi
\aleq \|\hat{g}\|_{L^\infty} \int_{|\xi| \geq 1} \frac{d \xi}{|\xi|^4} \aleq \|g\|_{L^1} \aleq 1. 
\end{align*}
For the low frequencies note that since $|x|g \in L^{1}$ we have that $\nabla \hat{g}$ is continuous and bounded. 
In light of this set 
\begin{align*}
h(\xi) := \phi(\xi) \left( \hat{g}(\xi) - \hat{g}(0) - \nabla \hat{g}(0) \cdot \xi \right)
\end{align*}
where $\phi$ is a smooth, compactly supported cutoff function with $\phi = 1$ on $|\xi| \leq 1$. 
Now since
\begin{align*}
\int_{|\xi|\leq 1} \frac{\xi}{|\xi|^2 (|\xi|^2 + \lambda^2)} d \xi = 0 
\end{align*}
we have 
\begin{align*}
\int_{|\xi| \leq 1} \frac{\hat{g}(\xi) - \hat{g}(0)}{|\xi|^2(|\xi|^2 + \lambda^2)} d\xi
= \int_{|\xi| \leq 1} \frac{h(\xi)}{|\xi|^2(|\xi|^2 + \lambda^2)} d\xi
\end{align*}
and so bound this integral instead. If we recall the form of $g$ we see $|g| \aleq \langle x \rangle^{-5}$ and so $(1+|x|^{1+\alpha})g \in L^1$ for some $\alpha>0$. Therefore $(1+|x|^{1+\alpha}) \check{h} \in L^1$ and noting also that $\nabla h(0) = 0$ we see $|\nabla h(\xi)| \aleq \text{min}(1,|\xi|^{\alpha})$.
The Mean Value Theorem along with $h(0)=0$ then gives $|h(\xi)| \aleq \text{min}(1,|\xi|^{1+\alpha})$. With this bound established we consider two regions of the integral
\begin{align*}
\int_{|\xi|\leq \lambda}\frac{|h(\xi)|}{|\xi|^2(|\xi|^2 + \lambda^2)} d\xi 
\aleq 
\int_{|\xi|\leq \lambda} \frac{|\xi|}{|\xi|^2(|\xi|^2 + \lambda^2)} d\xi 
\aleq
\int_{|\zeta|\leq 1} \frac{1}{|\zeta|(|\zeta|^2 + 1)} d\zeta 
\aleq 1
\end{align*}
and 
\begin{align*}
\int_{\lambda \leq |\xi|\leq 1}\frac{|h(\xi)|}{|\xi|^2(|\xi|^2 + \lambda^2)} d\xi 
&\aleq 
\int_{\lambda \leq |\xi|\leq 1} \frac{|\xi|^{1+\alpha}}{|\xi|^2(|\xi|^2 + \lambda^2)} d\xi \\
&\aleq
\lambda^{\alpha} \int_{1 \leq |\zeta|\leq 1/\lambda} \frac{|\zeta|^{\alpha-1}}{|\zeta|^2 + 1} d\zeta 
\aleq \lambda^{\alpha} \cdot \lambda^{-\alpha}
\aleq 1.
\end{align*}
Putting everything together gives (\ref{inner2}). 
\end{proof}

With the above estimates in hand we turn our attention to solving (\ref{lambda}). 

\begin{lemma}\label{lambdalemma}
For any $R>0$ there exists $\eps_0 = \eps_0(R)>0$ such that for $0<\eps \leq \eps_0$ and given a fixed $\eta \in L^\infty$ with $\|\eta\|_{L^\infty} \leq R \eps$ the equation 
\emph{(\ref{lambda})} has a unique 
solution 
$\lambda = \lambda(\eps, \eta)$ satisfying $\eps \lambda^{(1)} /2 \leq \lambda \leq 3 \eps \lambda^{(1)}/2$
where
\begin{align}\label{lambda1}
	\lambda^{(1)}= \frac{-\langle \Lambda W , f(W) \rangle}{6 \pi}>0. 
\end{align}
Moreover, we have the expansion 
\begin{align}\label{lambdaexp}
	\lambda = \lambda^{(1)} \eps + \tilde{\lambda},
       \qquad \tilde{\lambda}=O(\eps^{1+\delta_1}).
\end{align} 
\end{lemma}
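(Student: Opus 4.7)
The plan is to substitute the asymptotics \eqref{inner1}--\eqref{inner2} into \eqref{lambda}, recast it as a scalar fixed-point problem on the interval $I_\eps := [\eps\lambda^{(1)}/2,\, 3\eps\lambda^{(1)}/2]$, and apply Banach's theorem. Since \eqref{inner2} gives $\lambda\langle R_0(-\lambda^2)V\psi,W\rangle = 2\sqrt{3\pi}+O(\lambda)$, which is bounded away from zero for $\eps$ small, we may divide \eqref{lambda} by this quantity and define
\[
\Phi_\eps(\lambda) := \frac{\eps\,\langle R_0(-\lambda^2)V\psi, f(W)\rangle + \langle R_0(-\lambda^2)V\psi, N(\eta)\rangle}{\lambda\,\langle R_0(-\lambda^2)V\psi, W\rangle}.
\]
Using \eqref{inner1}, $\psi = \Lambda W/\sqrt{3\pi}$, and expanding the denominator via \eqref{inner2}, the first piece of $\Phi_\eps$ produces the leading order $\frac{-\eps\langle\Lambda W,f(W)\rangle}{6\pi} = \eps\lambda^{(1)}$ with an error $O(\eps\lambda^{\delta_1})=O(\eps^{1+\delta_1})$ on $I_\eps$.

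The main technical step is to control the nonlinear piece $\langle R_0(-\lambda^2)V\psi, N(\eta)\rangle$ uniformly for $\lambda\in I_\eps$ and $\|\eta\|_{L^\infty}\le R\eps$. My approach is to use the resolvent identity
\[
R_0(-\lambda^2)V\psi = G_0 V\psi + \lambda^2 R_0(-\lambda^2)\psi = -\psi + \lambda^2 R_0(-\lambda^2)\psi,
\]
where $G_0 V\psi = -\psi$ follows from $H\psi=0$. The first piece is paired against $N(\eta)$ using the pointwise bound \eqref{nonbdd}: since $\psi,W\lesssim \langle x\rangle^{-1}$, each term $\psi\cdot W^3\eta^2$, $\eps\psi\cdot |\eta|W^{p_i-1}$ is integrable at infinity thanks to $p_1>2$, giving $|\langle\psi,N(\eta)\rangle|\lesssim \eps^2$. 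For the terms in $N(\eta)$ without $W$-factors (like $|\eta|^5$, $\eps|\eta|^{p_i}$) one uses $\|R_0(-\lambda^2)V\psi\|_{L^\infty}\lesssim 1$ from \eqref{kernelinfty} and the fact that such terms are pointwise bounded by $\eps^{k}$ on the effective support of $R_0(-\lambda^2) V\psi$, which decays exponentially at scale $1/\lambda$; the factor $\lambda^2$ compensates losses in $\|R_0(-\lambda^2)\psi\|$ via \eqref{kernelbddw}. The net outcome is $|\langle R_0(-\lambda^2)V\psi, N(\eta)\rangle|\lesssim \eps^2$, and since $1+\delta_1<2$, this is absorbed into the $O(\eps^{1+\delta_1})$ error.

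Combining these estimates yields
\[
\Phi_\eps(\lambda) = \eps\lambda^{(1)} + O(\eps^{1+\delta_1})
\]
uniformly in $\lambda\in I_\eps$, so $\Phi_\eps$ maps $I_\eps$ to itself for $\eps$ small depending on $R$. For uniqueness, I would verify the contraction estimate $|\Phi_\eps(\lambda_1)-\Phi_\eps(\lambda_2)|\lesssim \eps|\lambda_1-\lambda_2|$ by differentiating in $\lambda$ via $\partial_\lambda R_0(-\lambda^2) = 2\lambda R_0(-\lambda^2)^2$, each application of which gains a factor of $\lambda$, pairing in the same norms as above to show that the relevant difference quotients of the numerator and denominator are all $O(\eps^2)$. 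Banach's fixed-point theorem then produces a unique $\lambda\in I_\eps$ solving \eqref{lambda}, and the identity $\lambda=\Phi_\eps(\lambda) = \eps\lambda^{(1)} + O(\eps^{1+\delta_1})$ gives precisely the decomposition \eqref{lambdaexp}.

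The principal obstacle is the nonlinear estimate for $\langle R_0(-\lambda^2)V\psi,N(\eta)\rangle$: with only an $L^\infty$ bound on $\eta$, terms like $|\eta|^5$ must be handled through pairings where $R_0(-\lambda^2)V\psi$ (not $\psi$) supplies the spatial integrability, and one must track carefully how the $\lambda^{-1}$-type losses in resolvent norms are offset by the $\lambda^2$ factor arising from the subtraction $R_0(-\lambda^2)V\psi + \psi$.
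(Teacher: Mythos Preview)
Your overall strategy---rewrite \eqref{lambda} as the fixed-point equation $\lambda=\Phi_\eps(\lambda)$ on $I_\eps$ and apply Banach---is exactly what the paper does (its map is called $\mathcal{H}$). The gaps are in the execution of the two key estimates.

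\textbf{The nonlinear term.} Your split $R_0(-\lambda^2)V\psi=-\psi+\lambda^2 R_0(-\lambda^2)\psi$ is unhelpful here. Neither piece is in $L^1$ with a useful bound, so you cannot pair against $|\eta|^5$ or $\eps|\eta|^{p_j}$ knowing only $\|\eta\|_{L^\infty}\le R\eps$; your ``effective support of scale $1/\lambda$'' heuristic is not a proof. More seriously, your claim that $\psi\cdot W^{p_1-1}\eta$ is integrable ``thanks to $p_1>2$'' is false: $\psi W^{p_1-1}\sim\langle x\rangle^{-p_1}$ is in $L^1(\R^3)$ only for $p_1>3$, so for $2<p_1\le 3$ the pairing $\langle\psi,W^{p_1-1}\eta\rangle$ need not even be finite. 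The paper avoids the split entirely and instead exploits the fast decay $|V\psi|\lesssim\langle x\rangle^{-5}$, which puts $V\psi\in L^1\cap L^\infty$. Then $\|R_0(-\lambda^2)V\psi\|_{L^q}$ is controlled directly from \eqref{kernelbddnw}--\eqref{kernelbdd} for any $1\le q\le\infty$ (e.g.\ $\|R_0(-\lambda^2)V\psi\|_{L^1}\lesssim\lambda^{-2}$), and one pairs term-by-term in \eqref{nonbdd}. The outcome is $|\langle R_0(-\lambda^2)V\psi,N(\eta)\rangle|\lesssim\eps^{1+\delta_1}$, not $\eps^2$; the weaker bound is sharp for $2<p_1\le 3$, coming precisely from the $\eps\,\eta W^{p_1-1}$ term.

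\textbf{The contraction.} Using $\partial_\lambda R_0(-\lambda^2)=2\lambda R_0(-\lambda^2)^2$ is fine in principle (equivalent to the resolvent identity the paper uses), but the assertion that ``each application gains a factor of $\lambda$'' is not uniformly true---it depends on the norms. The delicate piece is the denominator difference $\lambda_1\langle R_0(-\lambda_1^2)V\psi,W\rangle-\lambda_2\langle R_0(-\lambda_2^2)V\psi,W\rangle$: since $W\in L^3_w$ only, the resolvent bounds \eqref{kernelbddnw}--\eqref{kernelbdd} do not directly yield a Lipschitz estimate. The paper splits off $\sqrt{3}/|x|$ and handles that contribution on the Fourier side, exactly parallel to the computation establishing \eqref{inner2}; this step is essential and is missing from your sketch.
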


\begin{remark}
Writing the resolvent as \emph{(\ref{kernel})}, and thus the subsequent estimates \emph{(\ref{kernelbddnw})}-\emph{(\ref{kernelbdd})}, require $\lambda>0$ and so it is essential that we have established $\lambda^{(1)}>0$.  
This is the source of the sign condition in Assumption \ref{fassump}.
\end{remark}

\begin{proof}
We first estimate the remainder term. 
Take $  \eps \lambda^{(1)}/2 \leq \lambda \leq 3 \eps \lambda^{(1)} /2$ and $\eta$ with $\|\eta\|_{L^\infty} \leq R \eps$. 
We establish the estimate
\begin{align} \label{inner3}
| \langle R_0(-\lambda^2) V \psi, N(\eta) \rangle | \aleq \eps^{1+\delta_1}. 
\end{align}
We deal with each term in (\ref{nonbdd}). 
Take $j=1,2$. We frequently apply (\ref{kernelbddnw}), (\ref{kernelbdd}) and H\"older:
\begin{itemize}
\item
$ \ds
\begin{aligned}[t]
|\langle R_0(-\lambda^2) V \psi, W^3 \eta^2 \rangle | 
&\aleq \|R_0(-\lambda^2) V \psi \|_{L^6} \|W^3 \eta^2 \|_{L^{6/5}} \\
&\aleq \|V \psi\|_{L^{6/5}} \|\eta\|_{L^\infty}^2 \|W^3\|_{L^{6/5}} \\
& \aleq \eps^2
\end{aligned}
$
\item
$ \ds
\begin{aligned}[t]
|\langle R_0(-\lambda^2) V \psi, \eta^5 \rangle | 
&\aleq \|R_0(-\lambda^2) V \psi \|_{L^1} \|\eta^5\|_{L^\infty} \\
&\aleq \lambda^{-2} \|V \psi\|_{L^1} \|\eta\|_{L^\infty}^5 \\
&\aleq \eps^3
\end{aligned}
$
\item
$ \ds 
\begin{aligned}[t]
\eps |\langle R_0(-\lambda^2) V \psi, \eta^{p_j} \rangle | 
&\aleq \eps \|R_0(-\lambda^2) V\psi\|_{L^1} \|\eta^{p_j}\|_{L^\infty} \\
& \aleq \eps \lambda^{-2} \|V \psi\|_{L^1} \|\eta\|_{L^\infty}^{p_j} \\
& \aleq \eps \cdot \eps^{p_j-2}
\end{aligned}
$
\end{itemize}
The term that remains requires two cases. First take $p_j>3$ then
\begin{align*}
\eps |\langle R_0(-\lambda^2) V \psi, \eta W^{p_j-1} \rangle | 
&\aleq \eps \|R_0(-\lambda^2) V \psi\|_{L^r} \|\eta W^{p_j-1}\|_{L^{r^*}} \\
& \aleq \eps \|V \psi\|_{L^q} \|\eta\|_{L^\infty} \|W^{p_j-1}\|_{L^{r^*}} \\
& \aleq \eps^2 
\end{align*}
where we have used (\ref{kernelbdd}) for some $r^*<3/2$ and $r>3$. 
Now if instead $2<p_j \leq 3$ we use (\ref{kernelbddnw}) with $r^* = (3/(p_j-1))^+$ so $1-1/r = ((p_j-1)/3)^{-}$ and
\begin{align*}
\eps |\langle R_0(-\lambda^2) V \psi, \eta W^{p_j-1} \rangle | 
&\aleq \eps \|R_0(-\lambda^2) V \psi\|_{L^r} \|\eta W^{p_j-1}\|_{L^{r^*}} \\
& \aleq \eps \lambda^{3(1-1/r)-2} \|V \psi\|_{L^1} \|\eta\|_{L^\infty} \|W^{p_j-1}\|_{L^{r^*}} \\
& \aleq \eps \cdot \eps^{(p_j-2)^-}  
\end{align*}
and so we establish (\ref{inner3}).

With the estimates (\ref{inner1}), (\ref{inner2}), (\ref{inner3}) in hand we show that a solution to (\ref{lambda}) of the desired size exists. 
For this write (\ref{lambda}) as a fixed point problem 
\begin{align}\label{Hlambda}
\lambda=\mathcal{H}(\lambda) 
:= 
\frac{\eps \langle R_0(-\lambda^2) V \psi,f(W) \rangle  
+\langle R_0(-\lambda^2) V \psi,N(\eta) \rangle} 
{\lambda \langle R_0(-\lambda^2) V \psi, W \rangle}
\end{align}
with the intention of applying Banach Fixed Point Theorem. 
We show that for a fixed $\eta$ with $\|\eta\|_{L^\infty} \aleq \eps$ the function $\mathcal{H}$ maps the interval $ \eps \lambda^{(1)}/2 \leq \lambda \leq 3 \eps \lambda^{(1)}/2$ to itself and that $\mathcal{H}$ is a contraction. 

First note that $-\langle \psi, f(W) \rangle>0$ by Assumption \ref{fassump} and so after observing 
(\ref{inner1}), (\ref{inner2}), (\ref{inner3}) we see that $\mathcal{H}(\lambda) > 0$. 
Furthermore for $\eps$ small enough we have that 
$ \eps \lambda^{(1)}/2 \leq \mathcal{H}(\lambda) \leq 3 \eps \lambda^{(1)}/2$ 
and so $\mathcal{H}$ maps this interval to itself. 

We next show that $\mathcal{H}$ is a contraction.
Take $\eps \lambda^{(1)}/2  \leq \lambda_1,\lambda_2 \leq 3 \eps \lambda^{(1)}/2$ and again keep $\eta$ fixed with $\|\eta\|_{ L^\infty} \leq R \eps$.
Write
\begin{align*}
	\mathcal{H}(\lambda)= \frac{a(\lambda)+b(\lambda)}{c(\lambda)}
\end{align*}
so that 
\begin{align*}
	|\mathcal{H}(\lambda_1)-\mathcal{H}(\lambda_2)| 
	&\leq \frac{|a_1||c_2-c_1| +|a_1-a_2||c_1| + |b_1| |c_2-c_1| +|b_1-b_2| |c_1| }{|c_1c_2|} \\
	&\aleq  |a_1-a_2| + |b_1-b_2| + \eps |c_1-c_2| 
\end{align*}
using (\ref{inner1}), (\ref{inner2}), (\ref{inner3}). 
We treat each piece in turn. 

First
\begin{align*}
	|a_1-a_2|&=
	\eps |\langle  \left( R_0(-\lambda_1^2) -R_0(-\lambda_2^2) \right) V \psi,f(W) \rangle | \\
	&= \eps |\lambda_1^2 - \lambda_2^2| | \langle R_0(-\lambda_1^2) R_0(-\lambda_2^2) V \psi, f(W) \rangle |
\end{align*}
by the resolvent identity. 
Continuing we see 
\begin{align*}
	|a_1-a_2|
	\aleq  \eps^2 |\lambda_1-\lambda_2| \|R_0(-\lambda_1^2) R_0(-\lambda_2^2) V \psi \|_{L^r} \|f(W)\|_{L^{r^*}}
\end{align*}
where $1/r + 1/r^* =1$. Note that by Assumption \ref{fassump} we have 
$f(W) \in L^{r^*}$ for some $1<r^*<3/2$ so $3<r<\infty$. 
Applying now (\ref{kernelbdd}) we get
\begin{align*}
	|a_1-a_2|\aleq  \eps^2 |\lambda_1-\lambda_2| \|R_0(-\lambda_2^2) V \psi \|_{L^q}
\end{align*}
with $3(1/q-1/r)=2$ so $1<q<3/2$. Now apply the bound (\ref{kernelbddnw}) 
\begin{align*}
	|a_1-a_2| &\aleq   \eps^2 |\lambda_1-\lambda_2| \lambda^{3(1-1/q)-2} \|V \psi\|_{L^1} \\
	& \aleq  \eps^{3(1-1/q)} |\lambda_1-\lambda_2|
\end{align*}
and note that $3(1-1/q)>0$. 

Next consider 
\begin{align*}
	|b_1-b_2|=|\langle R_0(-\lambda_1^2) V \psi,N(\eta) \rangle-\langle R_0(-\lambda_2^2) V \psi,N(\eta) \rangle|.
\end{align*}
Proceeding as in the previous argument and using (\ref{kernelbddnw}) we see
\begin{align*}
	|b_1-b_2| &\aleq \eps |\lambda_1-\lambda_2| \|R_0(-\lambda^2_1) R_0(-\lambda^2_2) V \psi \|_{L^r} \|N(\eta)\|_{L^{r^*}} \\
&\aleq \eps |\lambda_1 - \lambda_2| \lambda_1^{-2} \|R_0(-\lambda^2_2) V \psi \|_{L^r} \|N(\eta)\|_{L^{r^*}} 
\end{align*}
for $1/r + 1/r^* =1$. We can estimate this term (using different $r$ and $r^*$ for different portions of $N(\eta)$) using the computations leading to (\ref{inner3}) to achieve 
\begin{align*}
|b_1-b_1| &\aleq \eps^{-1} \cdot \eps^{1+\delta_1} |\lambda_1-\lambda_2|
= \eps^{\delta_1} |\lambda_1 - \lambda_2|.
\end{align*}

Lastly consider 
\begin{align*}
	\eps |c_1-c_2|= \eps |\lambda_1 \langle R_0(-\lambda_1^2) V \psi, W \rangle
	-\lambda_2 \langle R_0(-\lambda_2^2) V \psi, W \rangle|.
\end{align*}
Again we write $W = \sqrt{3}/|x| + \tilde{W}$ where $\tilde{W} \in L^{6/5}$. 
The second term is easy. We compute
\begin{align*}
\eps  &|\lambda_1 \langle R_0(-\lambda_1^2) V \psi, \tilde{W} \rangle
-\lambda_2 \langle R_0(-\lambda_2^2) V \psi, \tilde{W} \rangle| \\
&\aleq \eps |\lambda_1-\lambda_2| |\langle R_0(-\lambda_1^2) V \psi, \tilde{W} \rangle |
+ \eps^3 |\lambda_1-\lambda_2| | \langle R_0(-\lambda_1^2)R_0(-\lambda_2^2) V \psi, \tilde{W} \rangle | \\
& \aleq \eps |\lambda_1-\lambda_2| + \eps^3 \lambda_1^{-2} \lambda_2^{3(1-1/6)-2} |\lambda_1-\lambda_2| 
\|V \psi\|_{L^1} \|\tilde{W}\|_{L^{6/5}} \\
& \aleq \eps |\lambda_1-\lambda_2| 
\end{align*}
where we have used (\ref{Wtilde}) once and (\ref{kernelbddnw}) twice. 
For the harder term we follow the computations which establish (\ref{inner2}) and so work on the Fourier Transform side
\begin{align*}
\eps \lambda_1 & \langle R_0(-\lambda_1^2) V \psi, 1/|x| \rangle 
-\eps \lambda_2 \langle R_0(-\lambda_2^2) V \psi, 1/|x| \rangle \\
&= C\eps \lambda_1 \left\langle \frac{\hat{g}(\xi)}{|\xi|^2 + \lambda_1^2}, \frac{1}{|\xi|^2} \right\rangle 
- C\eps \lambda_2 \left\langle \frac{\hat{g}(\xi)}{|\xi|^2 + \lambda_2^2}, \frac{1}{|\xi|^2} \right\rangle \\
&= C\eps \lambda_1 \left\langle \frac{\hat{g}(\xi)-\hat{g}(0)}{|\xi|^2 + \lambda_1^2}, \frac{1}{|\xi|^2} \right\rangle 
- C\eps \lambda_2 \left\langle \frac{\hat{g}(\xi)-\hat{g}(0)}{|\xi|^2 + \lambda_2^2}, \frac{1}{|\xi|^2} \right\rangle \\
& = C\eps (\lambda_1 - \lambda_2) \left\langle \frac{\hat{g}(\xi)-\hat{g}(0)}{|\xi|^2 + \lambda_1^2}, \frac{1}{|\xi|^2} \right\rangle \\
& \quad + C \eps \lambda_2 \left\langle (\hat{g}(\xi) - \hat{g}(0)) \left( \frac{1}{|\xi|^2 + \lambda_1^2} - \frac{1}{|\xi|^2+ \lambda_2^2} \right) , \frac{1}{|\xi|^2} \right\rangle 
\end{align*} 
where we have used the fact that 
\begin{align*}
\lambda_1 \left\langle \frac{\hat{g}(0)}{|\xi|^2+\lambda_1^2} , \frac{1}{|\xi|^2} \right\rangle
= \lambda_2 \left\langle \frac{\hat{g}(0)}{|\xi|^2+\lambda_2^2} , \frac{1}{|\xi|^2} \right\rangle. 
\end{align*}
Continuing 
as in the computations used to establish (\ref{inner2}), we bound
\begin{align*}
\eps |\lambda_1 - \lambda_2|
\left| \left\langle \frac{\hat{g}(\xi)-\hat{g}(0)}{|\xi|^2 + \lambda_1^2}, \frac{1}{|\xi|^2} \right\rangle \right|
\aleq \eps  |\lambda_1-\lambda_2|
\end{align*}
and 
\begin{align*}
\eps \lambda_2 & \left| \left\langle (\hat{g}(\xi) - \hat{g}(0)) \left( \frac{1}{|\xi|^2 + \lambda_1^2} - \frac{1}{|\xi|^2+ \lambda_2^2} \right) , \frac{1}{|\xi|^2} \right\rangle \right| \\
&\aleq \eps \lambda_2 (\lambda_1+\lambda_2) |\lambda_1-\lambda_2|
\int \frac{d \xi}{|\xi|(|\xi|^2+\lambda_1^2)(|\xi|^2+\lambda_2^2)}  \\
& \aleq \eps |\lambda_1-\lambda_2| \int \frac{d \zeta}{|\zeta|(|\zeta|^2 +1)(|\zeta|^2 + \lambda_2^2/\lambda_1^2)} \\
& \aleq \eps |\lambda_1-\lambda_2|.
\end{align*}
In this way we finally have
\begin{align*}
\eps |c_1-c_2| \leq \eps |\lambda_1-\lambda_2|. 
\end{align*}

So, putting everything together we see that by taking $\eps$ sufficiently small,
\begin{align*}
	|\mathcal{H}(\lambda_1)-\mathcal{H}(\lambda_2)| < \kappa |\lambda_1-\lambda_2|
\end{align*}
for some $0 < \kappa < 1$,
and hence $\mathcal{H}$ is a contraction. 
Therefore (\ref{Hlambda}) has a unique fixed point of the desired size.

To find the leading order $\lambda^{(1)}$ let $\lambda$ take the form in (\ref{lambdaexp}), substitute to (\ref{lambda}) use estimates (\ref{inner1}), (\ref{inner2}), (\ref{inner3}) and ignore higher order terms. 
An inspection of the higher order terms gives the order of $\tilde{\lambda}$. 
\end{proof}

In this way we now think of $\lambda$ as a function of $\eta$. 
We will also need the following Lipshitz condition for what follows in Lemma \ref{etalemma}. 
\begin{lemma}\label{lipshitz}
The $\lambda$ generated via Lemma \ref{lambdalemma} is Lipshitz continuous in $\eta$ in the sense that
\begin{align*}
	|\lambda_1-\lambda_2| \aleq  \eps^{\delta_1} \|\eta_1-\eta_2\|_{L^{\infty} }.
\end{align*}
\end{lemma}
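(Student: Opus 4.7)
The plan is to leverage the fixed-point formulation $\lambda = \mathcal{H}(\lambda)$ from the proof of Lemma~\ref{lambdalemma}, making its $\eta$-dependence explicit by writing $\mathcal{H} = \mathcal{H}(\lambda, \eta)$. Note that $\eta$ enters $\mathcal{H}$ only through the term $\langle R_0(-\lambda^2) V \psi, N(\eta) \rangle$ in the numerator, and that the denominator $\lambda \langle R_0(-\lambda^2) V \psi, W \rangle = 2\sqrt{3\pi} + O(\lambda)$ is of order $1$ by~\eqref{inner2}. The strategy is the standard one for Lipschitz dependence of a fixed point on a parameter: split
\[
  \lambda_1 - \lambda_2 = \bigl[\mathcal{H}(\lambda_1, \eta_1) - \mathcal{H}(\lambda_2, \eta_1)\bigr] + \bigl[\mathcal{H}(\lambda_2, \eta_1) - \mathcal{H}(\lambda_2, \eta_2)\bigr].
\]
The first bracket is bounded by $\kappa |\lambda_1 - \lambda_2|$ for some $\kappa < 1$ by the contraction estimate already established in the proof of Lemma~\ref{lambdalemma} (which was obtained at fixed $\eta$); this can be absorbed into the left-hand side. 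The task then reduces to bounding the second bracket by $\eps^{\delta_1} \|\eta_1 - \eta_2\|_{L^\infty}$.

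For this second bracket, only the numerator changes, yielding (up to the order $1$ denominator) the quantity $|\langle R_0(-\lambda_2^2) V \psi, N(\eta_1) - N(\eta_2)\rangle|$. I would invoke the pointwise inequality~\eqref{nondiff} to split $N(\eta_1) - N(\eta_2)$ into its four characteristic pieces and estimate each via H\"older together with the resolvent bounds~\eqref{kernelbddnw}, \eqref{kernelbdd}, exactly paralleling the derivation of~\eqref{inner3} but with an extra factor of $\|\eta_1 - \eta_2\|_{L^\infty}$ in place of one power of $\|\eta\|_{L^\infty}$. The purely $\eta$-type terms ($|\eta_1 - \eta_2|(|\eta_1| + |\eta_2|) W^3$ and $|\eta_1 - \eta_2|(|\eta_1|^4 + |\eta_2|^4)$) contribute at most $\eps \|\eta_1 - \eta_2\|_{L^\infty}$, and the $\eps |\eta_1 - \eta_2|(|\eta_1|^{p_j - 1} + |\eta_2|^{p_j - 1})$ terms contribute $\eps \cdot \eps^{p_j - 2}\|\eta_1 - \eta_2\|_{L^\infty}$, all of which are $\lesssim \eps^{\delta_1}\|\eta_1 - \eta_2\|_{L^\infty}$ since $\delta_1 < 1$.

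The main obstacle — and the term that actually sets the $\eps^{\delta_1}$ rate — is the $\eps |\eta_1 - \eta_2|(W^{p_1 - 1} + W^{p_2 - 1})$ piece when $2 < p_j \leq 3$, because then $W^{p_j - 1}$ decays too slowly to lie in any $L^{r^*}$ with $r^* < 3/2$ and the favorable bound~\eqref{kernelbdd} is unavailable. Following the $p_j \leq 3$ case of~\eqref{inner3}, I would instead use~\eqref{kernelbddnw} with $r^* = (3/(p_j - 1))^+$, producing a factor $\lambda_2^{3(1 - 1/r) - 2} \sim \eps^{(p_j - 2)^-}$ and hence an overall contribution $\eps^{1 + (p_j - 2)^-} \|\eta_1 - \eta_2\|_{L^\infty} \lesssim \eps^{\delta_1}\|\eta_1 - \eta_2\|_{L^\infty}$ for any admissible $\delta_1 < p_1 - 2$; for $p_j > 3$ one simply uses~\eqref{kernelbdd}, which is better. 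Combining all contributions and absorbing the contraction piece $\kappa |\lambda_1 - \lambda_2|$ into the left-hand side yields the claimed Lipschitz estimate.
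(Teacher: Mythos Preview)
Your proposal is correct and follows essentially the same route as the paper: split $\lambda_1-\lambda_2$ into a contraction piece (handled by the estimates already proved in Lemma~\ref{lambdalemma}) plus the change in the $N(\eta)$ term, then bound $|\langle R_0(-\lambda^2)V\psi,\,N(\eta_1)-N(\eta_2)\rangle|$ via~\eqref{nondiff} exactly as in the derivation of~\eqref{inner3}. The only cosmetic difference is that the paper freezes $\eta_2$ (rather than $\eta_1$) in the contraction bracket, and one of your intermediate exponents is slightly off (for $2<p_j\le 3$ the resolvent factor is $\lambda^{(p_j-3)^-}$, giving an overall $\eps^{(p_j-2)^-}\|\eta_1-\eta_2\|_{L^\infty}$ rather than $\eps^{1+(p_j-2)^-}$), but this still beats $\eps^{\delta_1}$ since $\delta_1<p_1-2$, so the conclusion is unaffected.
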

\begin{proof}
Take $\eta_1$ and $\eta_2$ with $\|\eta_1\|_{L^\infty},\|\eta_2\|_{L^\infty} \leq R \eps$. 
Let $\eta_1$ and $\eta_2$ give rise to $\lambda_1$ and $\lambda_2$ respectively through Lemma \ref{lambdalemma}. 
Consider now the difference 
\begin{align*}
	|\lambda_1-\lambda_2| 
	=& \bigg| \frac{ \eps \langle R_0(-\lambda_1^2) V \psi,f(W) \rangle 
	+\langle R_0(-\lambda_1^2) V \psi,N(\eta_1) \rangle}
	{\lambda_1 \langle R_0(-\lambda_1^2) V \psi, W \rangle} \\
	& \quad \quad -\frac{ \eps \langle R_0(-\lambda_2^2) V \psi,f(W) \rangle 
	+\langle R_0(-\lambda_2^2) V \psi,N(\eta_2) \rangle}
	{\lambda_2 \langle R_0(-\lambda_2^2) V \psi, W \rangle} \bigg| \\
	=: & \left| \frac{a(\lambda_1) + b(\lambda_1,\eta_1)}{c(\lambda_1)} 
	- \frac{a(\lambda_2)+b(\lambda_2,\eta_2)}{c(\lambda_2)} \right| \\
\end{align*}
observing (\ref{Hlambda}). Now we estimate
\begin{align*}
	|\lambda_1-\lambda_2| 
	\leq & \left| \frac{b(\lambda_1,\eta_1)-b(\lambda_1,\eta_2)}{c(\lambda_1)} \right|
	+ \left| \frac{a(\lambda_1) + b(\lambda_1,\eta_2)}{c(\lambda_1)}
	- \frac{a(\lambda_2) + b(\lambda_2,\eta_2)}{c(\lambda_2)} \right| \\
	\leq&  C |\langle R_0(-\lambda_1^2) V \psi, N(\eta_1) - N(\eta_2) \rangle | + \kappa|\lambda_1-\lambda_2|
\end{align*}
for some $0<\kappa<1$. 
The second term has been estimated using the computations of Lemma \ref{lambdalemma} and taking $\eps$ small enough. 
Now we estimate the first. 
Observing the terms in (\ref{nondiff}) we use the same procedure that established (\ref{inner3}) to obtain 
\begin{align*} 
|\langle R_0&(-\lambda_1^2) V \psi, N(\eta_1) - N(\eta_2) \rangle | 
\aleq  \eps^{\delta_1} \|\eta_1 - \eta_2\|_{L^{\infty}}. 
\end{align*}
So together we now see
\begin{align*}
	(1-\kappa)|\lambda_1-\lambda_2| \aleq  \eps^{\delta_1} \|\eta_1-\eta_2\|_{L^{\infty}}
\end{align*}
which gives the desired result. 
\end{proof}

\subsection{Solving for the Correction
} \label{findingeta}

We next solve (\ref{eta}), given that (\ref{orth}) holds. 
Recall the formulation of~\eqref{eta} as the fixed-point equation
\[
  \eta = \mathcal{G}(\eta,\eps) = (H + \lambda^2)^{-1} \mathcal{F}
\]
where in light of Lemma~\ref{lambdalemma}, we take
$\lambda = \lambda(\eps, \eta)$ and
$\mathcal{F} = \mathcal{F}(\eps, \lambda(\eps,\eta), \eta)$
so that~\eqref{orth} holds.

\begin{lemma} \label{etalemma}
There exists $R_0>0$ such that for any $R \geq R_0$, there is 
$\eps_1 = \eps_1(R) > 0$ such that for each $0<\eps \leq \eps_1$, there exists a 
unique solution $\eta \in L^{\infty}$ to \emph{(\ref{eta})} 
with $\|\eta\|_{L^\infty} \leq R \eps$. 
Moreover, we have the expansion 
\begin{align*}
\eta = \eps \bar{Q}(1+G_0V)^{-1}\bar{P} \left( G_0 f(W) - \lambda^{(1)} \sqrt{3} \lambda R_0(-\lambda^2) |x|^{-1} \right)
+ O_{L^\infty}(\eps^{1+\delta_1}) 
\end{align*}
where $\bar{P}$ and $\bar{Q}$ are given in \eqref{projections}. 
\end{lemma}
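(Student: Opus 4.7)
\medskip

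\noindent\textbf{Proof proposal.} The plan is to apply the Banach fixed point theorem to
$\mathcal{G}(\cdot, \eps)$ on the closed ball
\[
  B_R := \{ \eta \in L^\infty(\R^3) \text{ radial} \; | \; \|\eta\|_{L^\infty} \leq R \eps \},
\]
where throughout $\lambda = \lambda(\eps,\eta)$ is chosen via Lemma~\ref{lambdalemma}
so that the orthogonality condition (\ref{orth}) holds. The crucial observation is
that (\ref{orth}) reads $\langle V\psi, R_0(-\lambda^2) \mathcal{F} \rangle = 0$
(by self-adjointness of $R_0(-\lambda^2)$), so that Lemma~\ref{fullresolvent} gives
\[
  \|\mathcal{G}(\eta,\eps)\|_{L^\infty}
  = \| (1+R_0(-\lambda^2)V)^{-1} R_0(-\lambda^2) \mathcal{F} \|_{L^\infty}
  \lesssim \|R_0(-\lambda^2) \mathcal{F}\|_{L^\infty},
\]
that is, the singular $1/\lambda$ loss is avoided.

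For the self-mapping property I would estimate $\|R_0(-\lambda^2)\mathcal{F}\|_{L^\infty}$
term-by-term with $\mathcal{F} = -\lambda^2 W + \eps f(W) + N(\eta)$. The term
$\eps \|R_0(-\lambda^2) f(W)\|_{L^\infty} \lesssim \eps$ follows from (\ref{kernelinfty})
together with Assumption~\ref{fassump} (which puts $f(W) \in L^{3/2-} \cap L^{3/2+}$).
For $\lambda^2 W$, the split $W = \sqrt{3}/|x| + \tilde W$ from Lemma~\ref{lambdalemma}
is critical: on $\tilde W \in L^{3/2 \pm}$ we apply (\ref{kernelinfty}) to get $O(\lambda^2)$,
while the $1/|x|$ piece is handled by the explicit formula
\[
  R_0(-\lambda^2)(1/|x|) = \frac{1-e^{-\lambda |x|}}{\lambda^2 |x|},
  \qquad \|R_0(-\lambda^2)(1/|x|)\|_{L^\infty} \lesssim 1/\lambda,
\]
yielding an $O(\lambda) = O(\eps)$ contribution. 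For $N(\eta)$ I would mimic the
term-by-term analysis of (\ref{inner3}) using (\ref{nonbdd}): most pieces
($W^3 \eta^2$, $\eta^5$, $\eps \eta^{p_j}$) lie in good Lebesgue spaces and are handled
by (\ref{kernelinfty}) or (\ref{kernelbddnw}), giving $O(\eps^{1+\delta_1})$; the
cross-terms $\eps \eta W^{p_j-1}$ split into cases $p_j > 3$ (direct $L^{3/2\pm}$ bound)
and $2 < p_j \leq 3$, where I use (\ref{kernelbddnw}) with $r^*$ slightly above
$3/(p_j-1)$ to gain $\lambda^{3/r^* - 2}$, ultimately yielding $O(\eps^{1+\delta_1})$.
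This gives $\|\mathcal{G}(\eta,\eps)\|_{L^\infty} \leq C\eps$ for some absolute $C$, so
choosing $R_0 = 2C$ and $\eps_1$ small enough ensures self-mapping.

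For contraction, I would repeat these estimates on $\mathcal{G}(\eta_1) - \mathcal{G}(\eta_2)$
using (\ref{nondiff}) for $N(\eta_1) - N(\eta_2)$, the Lipschitz bound
$|\lambda_1 - \lambda_2| \lesssim \eps^{\delta_1}\|\eta_1 - \eta_2\|_{L^\infty}$
from Lemma~\ref{lipshitz} to control the $\lambda^2 W$ and resolvent differences (via
the resolvent identity as in Lemma~\ref{lambdalemma}), obtaining a contraction constant
$\lesssim \eps^{\delta_1} \to 0$. This yields a unique fixed point in $B_R$. Finally,
for the asymptotic expansion I would invoke the leading-order formula (\ref{leading}) of
Lemma~\ref{fullresolvent} to replace $(1+R_0(-\lambda^2)V)^{-1}$ by
$\bar Q (1+G_0 V)^{-1} \bar P$ modulo $O(\lambda \log(1/\lambda))$ in $L^\infty$,
applied to $R_0(-\lambda^2)\mathcal{F}$. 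The $N(\eta)$ contribution is already
$O(\eps^{1+\delta_1})$, while $R_0(-\lambda^2)[\eps f(W)] = \eps G_0 f(W) + O(\eps \cdot \lambda)$
by the resolvent identity, and $-\lambda^2 R_0(-\lambda^2) W$ reduces (via $W = \sqrt{3}/|x| + \tilde W$
and $\lambda = \lambda^{(1)}\eps + O(\eps^{1+\delta_1})$ from Lemma~\ref{lambdalemma})
to $-\eps \lambda^{(1)} \sqrt{3} \lambda R_0(-\lambda^2)(1/|x|) + O(\eps^{1+\delta_1})$
in $L^\infty$. The main obstacle in the whole argument is the slow decay of $W$: because
$W \notin L^q$ for $q \leq 3$, the $\lambda^2 W$ source term cannot be treated by a simple
$L^p$ resolvent bound, and the explicit $1/|x|$ computation is what forces one to keep
$\lambda R_0(-\lambda^2)|x|^{-1}$ unsimplified in the leading-order expansion.
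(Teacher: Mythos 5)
Your proposal follows essentially the same route as the paper: Banach fixed point in the $L^\infty$ ball of radius $R\eps$, with $\lambda=\lambda(\eps,\eta)$ determined by Lemma~\ref{lambdalemma} so that the orthogonality condition kills the $1/\lambda$ singularity in Lemma~\ref{fullresolvent}, then the same term-by-term estimates on $R_0(-\lambda^2)\mathcal{F}$, a contraction argument splitting the difference of resolvents and invoking Lemma~\ref{lipshitz}, and finally the leading-order expansion via~\eqref{leading}. The one cosmetic difference is that you use the explicit formula $R_0(-\lambda^2)(1/|x|) = (1-e^{-\lambda|x|})/(\lambda^2|x|)$ for the $\lambda^2 W$ source term, whereas the paper arrives at the same $O(\lambda)$ bound by a scaling/convolution argument; both are fine.

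One small overclaim worth flagging: you write $R_0(-\lambda^2)[\eps f(W)] = \eps G_0 f(W) + O(\eps\cdot\lambda)$ ``by the resolvent identity,'' i.e.\ error $O(\eps^2)$. This is too strong when $2<p_1<3$. In that regime $f(W)\sim |x|^{-p_1}$ at infinity puts $f(W)$ only in $L^q$ for $q>3/p_1>1$, and the paper's estimate of $\eps\|(R_0(-\lambda^2)-G_0)f(W)\|_{L^\infty}$ gives only $O(\eps^{1+(p_1-2)^-})$ (choosing $3/q=(p_1-2)^-$ in~\eqref{kernelbddnw}), not $O(\eps^2)$. This is still $O(\eps^{1+\delta_1})$ since $\delta_1<\min(1,p_1-2)$, so your final conclusion and the stated expansion survive, but the intermediate assertion as written is incorrect for $p_1<3$ and should be weakened to match what the resolvent identity actually yields.
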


\begin{proof}
We proceed by means of Banach Fixed Point Theorem. We show that 
$\mathcal{G}(\eta)$ maps a ball to itself and is a contraction. 
In this way we establish a solution to $\eta = \mathcal{G}(\eta,\eps,\lambda(\eps,\eta))$ in (\ref{eta}). 

Let $R>0$ (to be chosen) and take $\eps<\eps_0(R)$ as in Lemma \ref{lambdalemma}. 
In this way given $\eta \in L^\infty$ with $\|\eta\|_{L^\infty} \leq R \eps$ we can generate
\[
  \lambda = \lambda(\eps,\eta) = \lambda^{(1)} \eps + o(\eps).
\] 
We aim to take $\eps$ smaller still in order to run fixed point in the $L^\infty$ ball of radius $R \eps$. 

Consider
\begin{align*}
\|\mathcal{G}\|_{L^{\infty}} &= \| (1+R_0(-\lambda^2)V)^{-1} R_0(-\lambda^2) \mathcal{F}\|_{L^{\infty}} \\
&\aleq  \|R_0(-\lambda^2) \mathcal{F}\|_{L^{\infty}}
\end{align*}
in light of Lemma \ref{fullresolvent} and since we have chosen $\lambda$ to satisfy (\ref{orth}). Continuing with
\begin{align*}
\|\mathcal{G}\|_{L^{\infty}} \aleq  \|R_0(-\lambda^2) \left( -\lambda^2 W + \eps f(W) + N(\eta) \right) \|_{L^{\infty}}
\end{align*}
we treat each term separately. 
For the first term it is sufficient to replace $W$ with $1/|x|$ (otherwise we simply apply (\ref{kernelinfty}))
\begin{align*}
\lambda^2 \|R_0(-\lambda^2)W\|_{L^\infty} 
& \aleq \lambda \left\| \lambda R_0(-\lambda^2) \frac{1}{|x|} \right\|_{L^\infty} \\
& \aleq  \lambda \left\| \lambda \int \frac{e^{-\lambda |y|}}{|y|} \frac{1}{|x-y|} dy \right\|_{L^\infty} \\
& \aleq  \lambda \left\| \int \frac{ e^{-|z|}}{|z|} \frac{1}{|\lambda x - z|} dz \right\|_{L^\infty} \\
& \aleq  \lambda \left\| \left( \frac{ e^{-|x|}}{|x|} * \frac{1}{|x|} \right) (\lambda x) \right\|_{L^\infty} \\
& \aleq  \lambda \aleq \eps. 
\end{align*}
Now for the second term use (\ref{kernelinfty}) 
\begin{align*}
\eps \|R_0(-\lambda^2)f(W)\|_{L^\infty} \aleq \eps \|f(W)\|_{L^{3/2^-} \cap L^{3/2^+}} \aleq \eps. 
\end{align*}
And for the higher order terms we employ (\ref{kernelbddnw}) and (\ref{kernelinfty})
\begin{itemize}
\item
$ \ds
\begin{aligned}[t]
\|R_0(-\lambda^2) (W^3 \eta^2) \|_{L^{\infty}} 
&\aleq \| W^3 \eta^2 \|_{L^{3/2^-} \cap L^{3/2^+} } \\
&\aleq \|W^3 \|_{L^{3/2^-} \cap L^{3/2^+}}  \| \eta\|_{L^{\infty}}^2 \\
&\aleq R^2 \eps^{2} 
\end{aligned}
$
\item
$
\|R_0(-\lambda^2) \eta^5 \|_{L^{\infty}} \aleq \lambda^{-2} \|\eta^5\|_{L^{\infty}} 
\aleq \lambda^{-2} \|\eta\|_{L^{\infty}}^5 \aleq R^5 \eps^{3}
$
\item
$ \ds
\begin{aligned}[t]
\eps \|R_0(-\lambda^2) \eta^{p_j} \|_{L^{\infty}} 
&\aleq \eps \lambda^{-2} \|\eta^{p_j} \|_{L^{\infty}} \\
&\aleq \eps^{-1} \|\eta\|_{L^{\infty}}^{p_j} \\
&\aleq R^{p_j} \eps^{p_j -1}
\end{aligned}
$
\end{itemize}
for $j=1,2$. 
The remaining remainder term again requires two cases. For $p_j>3$ we use (\ref{kernelinfty}) to see
\begin{align*}
\eps \left\|R_0(-\lambda^2) \left( \eta W^{p_j-1} \right) \right\|_{L^{\infty}} 
&\aleq \eps \|\eta W^{p_j-1} \|_{L^{3/2^-} \cap L^{3/2^+}} \\
&\aleq \eps \|\eta\|_{L^\infty} \|W^{p_j-1} \|_{L^{3/2^-} \cap L^{3/2^+}} \\
& \aleq R \eps^2
\end{align*}
and for $2<p_j \leq 3$ we apply (\ref{kernelbddnw})
\begin{align*}
\eps \left\|R_0(-\lambda^2) \left( \eta W^{p_j-1} \right) \right\|_{L^{\infty}} 
&\aleq \eps \lambda^{(p_j-1)^- -2} \|\eta\|_{L^\infty} \|W^{p_j-1} \|_{L^{3/(p_j-1)^+}} \\
&\aleq  R \eps^{1+(p_j-2)^-}
\end{align*}
Collecting the above yields
\begin{align} \label{Ginfty}
\|\mathcal{G}\|_{L^\infty} \leq C \eps \left( 1 +  R^2 \eps + R^5 \eps^2 
+ R^{p_1} \eps^{p_1 - 2} + R^{p_2} \eps^{p_2 - 2} + R \eps
+ R \eps^{(p_1-2)-} \right)
\end{align}
and so taking $R_0 = 2C$, $R \geq R_0$, and then 
$\eps$ small enough so that 
$R \eps + R^4 \eps^2 
+ R^{p_1-1} \eps^{p_1 - 2} + R^{p_2-1} \eps^{p_2 - 2} + \eps
+ \eps^{(p_1-2)-} \leq \frac{1}{2C}$, we arrive at
\[
  \|\mathcal{G}\|_{L^\infty} \leq  R \eps.
\]
Hence $\mathcal{G}$ maps the ball of radius $R \eps$ in $L^\infty$ to itself.

Now we show that $\mathcal{G}$ is a contraction. 
Take $\eta_1$ and $\eta_2$ and let them give rise to $\lambda_1$ and $\lambda_2$ respectively. 
Again $\|\eta_j\|_{L^{\infty}} \leq R \eps$  
and denote $\mathcal{F}(\eta_j)$ by $\mathcal{F}_j$, $j=1,2$.
Consider
\begin{align*}
&\|\mathcal{G}(\eta_1,\eps) - \mathcal{G}(\eta_2,\eps)\|_{L^{\infty}} \\
&= \|(1+R_0(-\lambda^2_1)V)^{-1} R_0(-\lambda^2_1) \mathcal{F}_1
-(1+R_0(-\lambda^2_2)V)^{-1} R_0(-\lambda^2_2) \mathcal{F}_2 \|_{L^{\infty}} \\
& \leq \|(1+R_0(-\lambda^2_1)V)^{-1} \left( R_0(-\lambda^2_1) \mathcal{F}_1 - R_0(-\lambda^2_2) \mathcal{F}_2
\right)\|_{L^{\infty}} \\
& \quad + \|\left( (1+R_0(-\lambda^2_1)V)^{-1} -(1+R_0(-\lambda^2_2)V)^{-1} \right) R_0(-\lambda^2_2) \mathcal{F}_2\|_{L^{\infty}} \\
& \leq \| R_0(-\lambda^2_1) \mathcal{F}_1 - R_0(-\lambda^2_2) \mathcal{F}_2 \|_{L^{\infty}} \\
& \quad + \|\left( (1+R_0(-\lambda^2_1)V)^{-1} -(1+R_0(-\lambda^2_2)V)^{-1} \right) R_0(-\lambda^2_2) \mathcal{F}_2\|_{L^{\infty}} \\
& \leq \| R_0(-\lambda^2_1)\left( \mathcal{F}_1 - \mathcal{F}_2 \right) \|_{L^{\infty}}
 + \| \left( R_0(-\lambda_1^2) - R_0(-\lambda^2_2) \right) \mathcal{F}_2 \|_{L^{\infty}} \\
& \quad + \|\left( (1+R_0(-\lambda^2_1)V)^{-1} -(1+R_0(-\lambda^2_2)V)^{-1} \right) R_0(-\lambda^2_2) \mathcal{F}_2\|_{L^{\infty}} \\
&=: \text{I} + \text{II} + \text{III} 
\end{align*}
where we have applied Lemma \ref{fullresolvent}, observing the orthogonality condition. 
We treat each part in turn. 

Start with \textrm{I}. This computation is similar to those previous. We also apply Lemma \ref{lipshitz}:
\begin{align*}
\| R_0(-\lambda^2_1)\left( \mathcal{F}_1 - \mathcal{F}_2 \right) \|_{L^{\infty}}
&= \|R_0(-\lambda^2_1) \left( (\lambda_2^2 - \lambda_1^2) W + N(\eta_1)-N(\eta_2) \right) \|_{L^{\infty}} \\
& \aleq  |\lambda_1 - \lambda_2|  +  \eps^{\delta_1} \|\eta_1-\eta_2\|_{L^{\infty}} \\
&\aleq \eps^{\delta_1} \|\eta_1-\eta_2\|_{L^{\infty}}. 
\end{align*}

Part \textrm{II} is also similar to  previous computations:
\begin{align*}
\| \left( R_0(-\lambda^2_1) - R_0(-\lambda^2_2) \right) \mathcal{F}_2 \|_{L^{\infty}} 
&= |\lambda_1^2 - \lambda_2^2| \|R_0(-\lambda^2_1)R_0(-\lambda^2_2) \mathcal{F}_2 \|_{L^{\infty}} \\
& \aleq |\lambda_1 + \lambda_2| |\lambda_1 -\lambda_2| \lambda_1^{-2} \|R_0(-\lambda^2_2) \mathcal{F}_2 \|_{L^{\infty}} \\
& \aleq  \eps^{1} \cdot \eps^{-2} \cdot \eps |\lambda_1-\lambda_2|  \\
& \aleq  |\lambda_1-\lambda_2| \\
& \aleq \eps^{\delta_1} \|\eta_1-\eta_2\|_{L^{\infty}}. 
\end{align*}

Part \textrm{III} is the hardest. First we find a common denominator 
\begin{align*}
  (1+R_0(&-\lambda^2_1)V)^{-1} -(1+R_0(-\lambda^2_2)V)^{-1}  \\
&= (1+R_0(-\lambda^2_1)V)^{-1} (1+R_0(-\lambda^2_2)V) (1+R_0(-\lambda^2_2)V)^{-1} \\
&\quad - (1+R_0(-\lambda^2_1)V)^{-1} (1+R_0(-\lambda^2_1)V) (1+R_0(-\lambda^2_2)V)^{-1} \\
&= (1+R_0(-\lambda^2_1)V)^{-1} \left(  R_0(-\lambda^2_2)V - R_0(-\lambda^2_1)V \right)  (1+R_0(-\lambda^2_2)V)^{-1}
\end{align*}
so that
\begin{align*}
&\left((1+R_0(-\lambda^2_1)V)^{-1} -(1+R_0(-\lambda^2_2)V)^{-1} \right) R_0(-\lambda^2_2) \mathcal{F}_2 = \\
&(1+R_0(-\lambda^2_1)V)^{-1} \left(  R_0(-\lambda^2_2)V - R_0(-\lambda^2_1)V \right)  (1+R_0(-\lambda^2_2)V)^{-1}
 R_0(-\lambda^2_2) \mathcal{F}_2 \\
&= (1+R_0(-\lambda^2_1)V)^{-1} \left(  R_0(-\lambda^2_2)V - R_0(-\lambda^2_1)V \right) \mathcal{G}(\eta_2).
\end{align*}
Now
\begin{align*}
\text{III} = \|
(1+R_0(-\lambda^2_1)V)^{-1} \left(  R_0(-\lambda^2_2)V - R_0(-\lambda^2_1)V \right) \mathcal{G}(\eta_2) \|_{L^{\infty}}
\end{align*}
and here we just suffer the loss of one $\lambda$ (Lemma \ref{fullresolvent}) to achieve
\begin{align*}
\text{III} &\aleq
 \lambda_1^{-1} \| 
\left(  R_0(-\lambda^2_2)V - R_0(-\lambda^2_1)V \right) \mathcal{G}(\eta_2) \|_{L^{\infty}} \\
& \aleq  \lambda_1^{-1} |\lambda^2_2 - \lambda^2_1| \| R_0(-\lambda^2_2) R_0(-\lambda^2_1) V 
\mathcal{G}(\eta_2)\|_{L^{\infty}} \\
& \aleq  \lambda_1^{-1} |\lambda_2+\lambda_1| |\lambda_2 - \lambda_1| \lambda_2^{-1/2} \|R_0(-\lambda_1^2) V \mathcal{G}(\eta_2)\|_{L^{2}} \\
& \aleq  \eps^{-1} \cdot \eps^1 |\lambda_2 - \lambda_1| \lambda_2^{-1/2}\lambda_1^{-1/2} \|V \mathcal{G}(\eta_2)\|_{L^1} \\
& \aleq  \eps^{-1} |\lambda_2-\lambda_1| \|V\|_{L^1} \|\mathcal{G}(\eta_2)\|_{L^\infty}
\end{align*}
and using Lemma \ref{lipshitz} and (\ref{Ginfty}) we see
\begin{align*}
\text{III} \aleq |\lambda_1-\lambda_2|
\aleq \eps^{\delta_1} \|\eta_1-\eta_2\|_{L^{\infty}}. 
\end{align*}
Hence, by taking $\eps$ smaller still if needed, we have
\begin{align*}
\|\mathcal{G}(\eta_1,\eps) - \mathcal{G}(\eta_2,\eps)\|_{L^{\infty}} \leq \kappa \|\eta_1 - \eta_2\|_{L^{\infty}}
\end{align*}
for some $0<\kappa<1$ and so $\mathcal{G}$ is a contraction. 
Therefore, invoking the Banach fixed-point theorem,
we have established the existence of a unique $\eta$, 
with $\| \eta \|_{L^\infty} \leq R \eps$, satisfying (\ref{eta}).

To see the leading order observe the order of the terms appearing in the previous computations as well as the following. First if $3\leq p_1<5$ then
\begin{align*}
\eps \|\left(R_0(-\lambda^2) - G_0 \right) f(W) \|_{L^\infty}
&\aleq \eps \lambda^{2} \|R_0(-\lambda^2) G_0 f(W)\|_{L^\infty} \\
&\aleq \eps \lambda^2 \cdot \lambda^{-1^-} \|G_0 f(W) \|_{L^{3^+}} \\
& \aleq \eps \lambda^{1^-} \|f(W)\|_{L^{1^+}} \\
& \aleq \eps^{2^-}
\end{align*}
and if instead $2<p_1<3$ then take $3/q=(p_1-2)^-$ and 
\begin{align*}
\eps \|\left(R_0(-\lambda^2) - G_0 \right) f(W) \|_{L^\infty}
&\aleq \eps \lambda^{2} \|R_0(-\lambda^2) G_0 f(W)\|_{L^\infty} \\
& \aleq \eps \lambda^2 \cdot \lambda^{3/q-2} \|G_0 f(W)\|_{L^q} \\
& \aleq \eps \lambda^{3/q} \|f(W)\|_{L^{(3/p_1)^+}} \\
& \aleq \eps^{1 + (p_1-2)^-}.
\end{align*}
The lemma is now proved. 
\end{proof}

With the existence of $\eta$ established we can improve the space in which $\eta$ lives.

\begin{lemma} \label{etabound}
The $\eta$ established in Lemma \ref{etalemma} is in $L^{r} \cap \dot{H}^1$ for any $3<r \leq \infty$. 
The function $\eta$ also enjoys the bounds
\begin{alignat*}{2}
&\|\eta\|_{L^r} &&\aleq  \eps^{1-3/r} \\
&\|\eta\|_{\dot{H}^1} &&\aleq  \eps^{1/2} 
\end{alignat*}
for all $3<r \leq \infty$.
Furthermore we have the expansion 
\begin{align*}
\eta = \bar{Q}(1+G_0V)^{-1}\bar{P} R_0(-\lambda^2) (-\lambda^2 \sqrt{3} |x|^{-1}) + \tilde{\eta}
\end{align*}
with 
\begin{align*}
&\|\tilde{\eta}\|_{L^r}
\lesssim \\
&\max
{\left\lbrace
{\left\lbrace
\begin{array}{c}
\eps^{1-}, \emph{ if } 2<p_1<3 \emph{ and } r=3/(p_1-2)  \\
\eps, \quad \emph{else}
\end{array}
\right\rbrace}
, 
\eps^{p_1-2  +1-3/r}
, 
\eps^{2(1-3/r)}
\right\rbrace}
\end{align*}
for $3<r<\infty$ and 
where $\bar{P}$ and $\bar{Q}$ are given in \emph{(\ref{projections})}. 
\end{lemma}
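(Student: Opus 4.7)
The plan is to propagate the fixed-point representation $\eta = (1+R_0(-\lambda^2)V)^{-1} R_0(-\lambda^2)\mathcal{F}$ from \eqref{eta} (with the orthogonality condition \eqref{orth} already enforced by the choice of $\lambda$ in Lemma~\ref{lambdalemma}) into stronger spaces. By the orthogonality version of Lemma~\ref{fullresolvent}, bounding $\eta$ in $L^r$ for $3<r\le\infty$ reduces to bounding $R_0(-\lambda^2)\mathcal{F}$ in $L^r$, while the sharper expansion \eqref{leading} isolates the leading term in $\eta$ and lets one read off $\tilde\eta$ as a sum of three distinct errors.

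For the $L^r$ estimate I would split $\mathcal{F} = -\lambda^2 W + \eps f(W) + N(\eta)$ and $W = \sqrt 3/|x| + \tilde W$, with the dominant contribution coming from $\lambda^2 R_0(-\lambda^2)(\sqrt 3/|x|)$. The scaling identity
\[
R_0(-\lambda^2)(1/|x|)(y) \;=\; \lambda^{-1}\, h(\lambda y),\qquad h := (-\Delta+1)^{-1}(1/|x|),
\]
together with $h \in L^r$ for all $r>3$ (since $h$ is bounded near the origin and decays like $1/|y|$ at infinity), yields $\lambda^2\|R_0(-\lambda^2)(1/|x|)\|_{L^r}\sim\lambda^{1-3/r}\sim \eps^{1-3/r}$, the desired leading order. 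The remaining pieces $\lambda^2\tilde W$, $\eps f(W)$, and $N(\eta)$ are handled by the resolvent bounds \eqref{kernelbddnw}, \eqref{kernelbdd}, \eqref{kernelinfty}, the pointwise nonlinearity bound \eqref{nonbdd}, and $\|\eta\|_{L^\infty}\lesssim \eps$, exactly as in the proofs of Lemmas~\ref{lambdalemma} and \ref{etalemma} but with $L^\infty$ replaced by $L^r$; each produces a strictly smaller power of $\eps$.

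For the $\dot H^1$ bound I would differentiate to obtain $\nabla\eta=\nabla R_0(-\lambda^2)(\mathcal F-V\eta)$. The dominant term is $\nabla[\lambda^2 R_0(-\lambda^2)(\sqrt 3/|x|)](y)=\lambda^2(\nabla h)(\lambda y)$, whose $L^2$ norm equals $\lambda^{1/2}\|\nabla h\|_{L^2}\sim \eps^{1/2}$ (with $\nabla h\in L^2$ since $\nabla h\sim 1/|y|^2$ at infinity). For the other pieces I would use Hardy--Littlewood--Sobolev, $\||\nabla|^{-1}g\|_{L^2}\lesssim\|g\|_{L^{6/5}}$: after combining \eqref{nonbdd} with $\|\eta\|_{L^\infty}\lesssim\eps$ and $W^4,\tilde W\in L^{6/5}$, each of $\lambda^2\tilde W$, $V\eta$, $\eps f(W)$ (treating the sub-case $p_1\le 5/2$ with a frequency split exploiting the $\lambda$-cutoff in $R_0(-\lambda^2)$), and $N(\eta)$ contributes $O(\eps)$ or smaller to $\|\nabla\eta\|_{L^2}$.

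The expansion for $\tilde\eta$ is then obtained by applying \eqref{leading} to $R_0(-\lambda^2)\mathcal F$ and subtracting the leading term $\bar Q(1+G_0V)^{-1}\bar P R_0(-\lambda^2)(-\lambda^2 \sqrt 3/|x|)$. The three maxima in the claimed bound correspond to (i) an $\eps$ (or $\eps^{1-}$) contribution from $\tilde W$ and from $\eps G_0 f(W)$; (ii) an $\eps^{p_1-2+1-3/r}$ contribution from applying \eqref{kernelbddnw} to $\eps f(W)$ when $r<3/(p_1-2)$ and $p_1<3$; and (iii) the operator remainder $O(\lambda^{1-3/r})\cdot\|R_0(-\lambda^2)\mathcal F\|_{L^r}=O(\eps^{2(1-3/r)})$ coming from \eqref{leading}. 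The chief difficulty throughout is that $W\sim 1/|x|$ lies in no $L^q$ with $q\le 3$, so direct Young-type estimates fail on the $-\lambda^2 W$ piece of $\mathcal F$, and both the $\eps^{1-3/r}$ $L^r$-rate and the $\eps^{1/2}$ $\dot H^1$-rate emerge only through explicit scaling analysis of $R_0(-\lambda^2)(1/|x|)$ via the Yukawa kernel.
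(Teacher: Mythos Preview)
Your overall architecture matches the paper's proof: use the orthogonality from Lemma~\ref{lambdalemma} together with Lemma~\ref{fullresolvent} to reduce to bounding $R_0(-\lambda^2)\mathcal{F}$, isolate the slow-decay piece $\sqrt{3}/|x|$ of $W$ as the source of the leading $\eps^{1-3/r}$ (resp.\ $\eps^{1/2}$) rate, and read off $\tilde\eta$ from the three error mechanisms. Your scaling identity $R_0(-\lambda^2)(|x|^{-1})=\lambda^{-1}h(\lambda\cdot)$ is a clean alternative to the paper's use of~\eqref{kernelbddw} with $W\in L^3_w$; both give the same power.

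There is one genuine gap. You assert that the nonlinear piece $N(\eta)$ is controlled in $L^r$ (and in $L^{6/5}$ for the $\dot H^1$ argument) using only $\|\eta\|_{L^\infty}\lesssim\eps$. This fails for the pure-$\eta$ terms $|\eta|^5$ and $\eps|\eta|^{p_j}$: from $\eta\in L^\infty$ alone these lie only in $L^\infty$, and there is no bound $R_0(-\lambda^2):L^\infty\to L^r$ for $r<\infty$ (the Yukawa kernel is not in $L^r$ for $r\ge 3$). The paper closes this by a bootstrap: it estimates, for instance,
\[
\|R_0(-\lambda^2)\eta^5\|_{L^r}\lesssim \lambda^{-2}\|\eta\|_{L^\infty}^{4}\,\|\eta\|_{L^r}
\lesssim \eps^{2}\|\eta\|_{L^r},
\]
so that all nonlinear contributions take the form $\kappa\|\eta\|_{L^r}$ with $\kappa=o(1)$ and are absorbed into the left side. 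The $\dot H^1$ estimate is closed the same way: the paper differentiates through the nonlinearity (writing $\nabla f(W+\eta)=f'(W+\eta)(\nabla W+\nabla\eta)$), producing $\kappa\|\eta\|_{\dot H^1}$ terms to absorb. Your HLS route $\|\nabla R_0 g\|_{L^2}\lesssim\|g\|_{L^{6/5}}$ works for the $W$-carrying terms, but for $|\eta|^5$ and $\eps|\eta|^{p_j}$ with $p_j\le 5/2$ you will still need either the already-established $L^r$ bounds (hence the bootstrap must come first) or the $\lambda$-weighted estimate $\|\nabla R_0\|_{L^2\to L^2}\lesssim\lambda^{-1}$ combined with $\|\eta\|_{L^{2p_j}}$; either way, the claim ``$\|\eta\|_{L^\infty}$ alone suffices'' is not correct and the absorption step must be made explicit.
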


\begin{proof}
The computations which produce (\ref{Ginfty}) are sufficient to establish the result with $r=\infty$. 
Take $3<r<\infty$ and consider:
\begin{align*}
\|\eta\|_{L^r} \aleq \lambda^2 \|R_0(-\lambda^2)W\|_{L^r} +  \eps \|R_0(-\lambda^2)f(W)\|_{L^r} 
+ \|R_0(-\lambda^2) N(\eta)\|_{L^r}.
\end{align*}

For the first term use (\ref{kernelbddw})
\begin{align*}
	\lambda^2 \|R_0(-\lambda^2)W\|_{L^r} \aleq \lambda^2 \cdot \lambda^{3(1/3-1/r)-2} \|W\|_{L^3_w}
	\aleq \eps^{1-3/r} 
\end{align*}
to see the leading order contribution. 

While the second term contributed to the leading order in Lemma \ref{etalemma} it is inferior to the first term when measured in $L^r$. We do however need several cases. 
Suppose that $3 \leq p_1 <5$ or $r> 3/(p_1-2)$ and apply (\ref{kernelbdd}) with $1/q=1/r+2/3$
\begin{align*}
\eps \|R_0(-\lambda^2)f(W)\|_{L^r} 
\aleq \eps \|f(W)\|_{L^q} 
\aleq \eps. 
\end{align*}
Note that under these conditions $f(W) \in L^q$. 
Now suppose that $2<p_1<3$ and $r=3/(p_1-2)$ and apply (\ref{kernelbddnw}) with $q=(3/p_1)^+$
\begin{align*}
\eps \|R_0(-\lambda^2)f(W)\|_{L^r} 
\aleq \eps \lambda^{3(1/q-(p_1-2)/3)-2} \|f(W)\|_{L^q}
\aleq \eps^{1^-}. 
\end{align*}
And if $2<p_1<3$ and $3<r<3/(p_1-2)$ apply (\ref{kernelbddw}) with $q=3/p_1$
\begin{align*}
\eps \|R_0(-\lambda^2)f(W)\|_{L^r} 
\aleq \eps \lambda^{3(p_1/3-1/r)-2} \|f(W)\|_{L^q_w}
\aleq \eps^{1-3/r+p_1-2}. 
\end{align*}

And thirdly the remaining terms.
First use (\ref{kernelbdd}) where $1/q = 1/r + 2/3$ to see 
\begin{align*}
	\|R_0(-\lambda^2) (W^3 \eta^2)\|_{L^r} \aleq \|W^3 \eta^2\|_{L^q}
	\aleq \|W^3\|_{L^{3/2}} \|\eta\|_{L^r} \|\eta\|_{L^\infty}
	\aleq \eps \|\eta\|_{L^r} 
\end{align*}
and now use (\ref{kernelbddnw}) with $1/q=1/r$ to obtain
\begin{align*}
\|R_0(-\lambda^2) \eta^5\|_{L^r} \aleq \lambda^{-2} \|\eta^5\|_{L^r} 
\aleq \lambda^{-2} \|\eta\|_{L^r} \|\eta\|_{L^{\infty}}^4 
\aleq \eps^2 \|\eta\|_{L^r}.
\end{align*}
and similarly for $j=1,2$ 
\begin{align*}
\eps \|R_0(-\lambda^2) \eta^{p_j} \|_{L^r} 
\aleq \eps \lambda^{-2} \|\eta^{p_j}\|_{L^r}
\aleq \eps^{-1} \|\eta\|_{L^\infty}^{p_j-1} \|\eta\|_{L^r}
\aleq \eps^{p_j-2} \|\eta\|_{L^r}
\end{align*}
noting that $p_2-2 \geq p_1-2>0$. 
For the last remainder term we have two cases.
If $p_j>3$ then use (\ref{kernelbdd}) with $1/q = 1/r+2/3$
\begin{align*}
\eps \|R_0(-\lambda^2) \left( \eta W^{p_j-1}  \right) \|_{L^r} 
\aleq \eps \|\eta W^{p_j-1}\|_{L^q} 
\aleq \eps \|\eta\|_{L^r} \| W^{p_j-1}\|_{L^{3/2}}
\aleq \eps \|\eta\|_{L^r}. 
\end{align*}
If instead $2<p_j \leq 3$ then we need (\ref{kernelbddw}) with $1/q = (p_j-1)/3$ so that
\begin{align*}
\eps \|R_0(-\lambda^2) \left( \eta W^{p_j-1}  \right) \|_{L^r} 
&\aleq \eps \lambda^{3(1/q-1/r)-2} \|\eta W^{p_j-1} \|_{L^q_w} \\
&\aleq \eps \lambda^{p_1-1-3/r-2} \|\eta\|_{L^\infty} \|W^{p_j-1}\|_{L^q_w} \\
& \aleq \eps^{p_j-2} \eps^{1-3/r} 
\end{align*}
So together we have
\begin{align*}
  \|\eta\|_{L^r} \leq C \eps^{1-3/r} + \kappa \| \eta \|_{L^r}
\end{align*}
where $\kappa$ may be chosen sufficiently small to yield the
desired $L^r$ bound for $3<r<\infty$. 
An inspection of the higher order terms gives the size of $\tilde{\eta}$.
We also must note Lemma \ref{fullresolvent}. 
There are several competing terms which determine the size of $\tilde{\eta}$ depending on $p_1$ and $r$. 

On to the $\dot{H}^1$ norm. We need the identity 
\begin{align*}
\eta &= (1+ R_0(-\lambda^2)V)^{-1} R_0(-\lambda^2)\mathcal{F} \\
&= R_0(-\lambda^2)\mathcal{F} 
- R_0(-\lambda^2) V (1 + R_0(-\lambda^2)V)^{-1} R_0(-\lambda^2) \mathcal{F} \\
&= R_0(-\lambda^2)\mathcal{F} - R_0(-\lambda^2) V \eta
\end{align*}
so we have two parts
\begin{align*}
	\|\eta\|_{\dot{H}^1} \leq \|R_0(-\lambda^2)\mathcal{F}\|_{\dot{H}^1} + \|R_0(-\lambda^2) V \eta \|_{\dot{H}^1}.
\end{align*}
For the first
\begin{align*}
\|R_0(-\lambda^2) \mathcal{F}\|_{\dot{H}^1} 
&\aleq \lambda^2 \|R_0(-\lambda^2)W\|_{\dot{H}^1} + \eps \|R_0(-\lambda^2) f(W+\eta) \|_{\dot{H}^1} \\
& \quad + \|R_0(-\lambda^2) \left( W^3 \eta^2 + W^2 \eta^3 + W \eta^4 + \eta^5 \right) \|_{\dot{H}^1} \\
\end{align*}
and 
\begin{align*}
\lambda^2 \|R_0(-\lambda^2)W\|_{\dot{H}^1}
&\aleq \lambda^2 \|R_0(-\lambda^2) \nabla W \|_{L^2} \\
& \aleq \lambda^2 \cdot \lambda^{1/2-2} \|\nabla W \|_{L^{3/2}_w} \\
& \aleq \eps^{1/2}
\end{align*}
and
\begin{align*}
\eps \|R_0(-\lambda^2) f(W+\eta) \|_{\dot{H}^1}
&\aleq \eps \|R_0(-\lambda^2) f'(W+\eta) (\nabla W + \nabla \eta) \|_{L^2} \\
&\aleq  \eps \lambda^{-1/2} \|f'(W+\eta) \nabla W \|_{L^1} \\
& \quad + \eps \lambda^{1^-} \|f'(W+\eta) \nabla \eta\|_{L^{6/5^-}} \\
&\aleq \eps^{1/2} \|f'(W+\eta)\|_{L^{3^-}} \|\nabla W\|_{L^{3/2^+}} \\
& \quad + \eps^{0^+} \|f'(W+\eta)\|_{L^{3^-}} \|\nabla \eta\|_{L^2} \\
& \aleq \eps^{1/2} + \kappa \|\eta\|_{\dot{H}^1}
\end{align*}
with $\kappa$ small and  
\begin{align*}
\|R_0(-\lambda^2) &\left( W^3 \eta^2 + W^2 \eta^3  + W \eta^4 + \eta^5 \right) \|_{\dot{H}^1} 
\aleq \|R_0(-\lambda^2) \eta (\nabla W f_1 + \nabla \eta f_2) \|_{L^2} 
\end{align*}
where $f_1$ and $f_2$ are in $L^2$ so 
\begin{align*}
\|R_0(-\lambda^2) &\left( W^3 \eta^2 + W^2 \eta^3  + W \eta^4 + \eta^5 \right) \|_{\dot{H}^1} \\
&\aleq \lambda^{-1/2} \|\eta\|_{L^\infty}  \left( \|\nabla W\|_{L^2} \|f_1\|_{L^2} + \|\nabla \eta\|_{L^2} \|f_2\|_{L^2}
\right)\\
& \aleq \eps^{1/2} + \kappa \|\eta\|_{\dot{H}^1}. 
\end{align*}
For the second
\begin{align*}
\|R_0(-\lambda^2)V\eta \|_{\dot{H}^1} 
= \left\|\left( \nabla \frac{e^{-\lambda|x|}}{|x|}\right)* \left( V \eta \right)\right\|_{L^2}
= \left\|\left( \lambda^2 g(\lambda x) \right) * \left( V \eta \right)\right\|_{L^2}
\end{align*}
where $g \in L^{3/2}_w$. So using weak Young's we obtain
\begin{align*}
\|R_0(-\lambda^2)V\eta \|_{\dot{H}^1} 
&\aleq \lambda^2 \|g(\lambda x)\|_{L^{3/2}_w} \|V \eta \|_{L^{6/5}} \\
&\aleq \lambda^2 \cdot \lambda^{-2} \|V\|_{L^{3/2}} \|\eta\|_{L^6} \\
& \aleq \|\eta\|_{L^6} \\
& \aleq \eps^{1/2}. 
\end{align*}
So putting everything together gives
\begin{align*}
	\|\eta\|_{\dot{H}^1} \leq C \left( \eps^{1/2} + \kappa \| \eta \|_{\dot H^1} \right) 
\end{align*}
which gives the desired bound by taking $\kappa$ sufficiently small. 
\end{proof}

Combining Lemmas \ref{lambdalemma}, \ref{etalemma}, \ref{etabound} and Remark \ref{H1dot} completes the proof of Theorem \ref{theorem1}. 

At this point we demonstrate the following monotonicity result which will be used in Section \ref{variational}. 
\begin{lemma}
\label{monotone}
Suppose that $f(W)=W^p$ with $3<p<5$. 
Take $\eps_1$ and $\eps_2$ with $0<\eps_1<\eps_2<\eps_0$. 
Let $\eps_1$ give rise to $\lambda_1$ and $\eta_1$ and let $\eps_2$ give rise to $\lambda_2$ and $\eta_2$ via Theorem \ref{theorem1}. 
We have
\begin{align}
|(\lambda_2 - \lambda_1) - \lambda^{(1)}(\eps_2-\eps_1) | \lesssim o(1) |\eps_2-\eps_1|. 
\end{align}
\end{lemma}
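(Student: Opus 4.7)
Since Lemma~\ref{lambdalemma} yields $\lambda_j = \lambda^{(1)} \eps_j + \tilde\lambda_j$, the statement is equivalent to $|\tilde\lambda_2 - \tilde\lambda_1| = o(1)|\eps_2-\eps_1|$, i.e.\ a Lipschitz-in-$\eps$ bound for the map $\eps \mapsto \lambda(\eps)$ with leading constant $\lambda^{(1)}$. The plan is to set up the analog of Lemma~\ref{lipshitz} but differentiating in $\eps$, feeding it into a companion Lipschitz-in-$\eps$ estimate for $\eta$.

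First I would decompose
\[
  \lambda_2 - \lambda_1 = [\lambda(\eps_2,\eta_2)-\lambda(\eps_1,\eta_2)] + [\lambda(\eps_1,\eta_2)-\lambda(\eps_1,\eta_1)].
\]
Lemma~\ref{lipshitz} directly controls the second bracket by $O(\eps^{\delta_1})\|\eta_2-\eta_1\|_{L^\infty}$. For the first bracket, I would subtract the two identities
\[
  \lambda_j\, c(\lambda_j) = \eps_j \tilde a(\lambda_j) + \tilde b(\lambda_j,\eta_2), \qquad j=1,2,
\]
where $c(\lambda) = \lambda\langle R_0(-\lambda^2)V\psi, W\rangle$, $\tilde a(\lambda) = \langle R_0(-\lambda^2)V\psi, f(W)\rangle$, $\tilde b(\lambda,\eta)=\langle R_0(-\lambda^2)V\psi, N(\eta)\rangle$. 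Exactly the Lipschitz-in-$\lambda$ bounds derived for $a,b,c$ in the proof of Lemma~\ref{lambdalemma} apply, and combined with \eqref{inner1}--\eqref{inner2} they give
\[
  2\sqrt{3\pi}\,(\lambda(\eps_2,\eta_2)-\lambda(\eps_1,\eta_2)) = 2\sqrt{3\pi}\,\lambda^{(1)}(\eps_2-\eps_1) + o(1)\bigl(|\lambda_2-\lambda_1|+|\eps_2-\eps_1|\bigr),
\]
the $o(1)$ being $\eps^{\delta_1}$ or $\eps$ factors that go to $0$ as $\eps \to 0$. Absorbing $o(1)|\lambda_2-\lambda_1|$ yields
\[
  \lambda(\eps_2,\eta_2)-\lambda(\eps_1,\eta_2) = \lambda^{(1)}(\eps_2-\eps_1) + o(1)|\eps_2-\eps_1|.
\]

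The remaining task is to prove the auxiliary Lipschitz bound $\|\eta_2-\eta_1\|_{L^\infty} \lesssim |\eps_2-\eps_1|$, which then makes the second bracket above $o(1)|\eps_2-\eps_1|$ and closes the argument. I would run the contraction analysis of Lemma~\ref{etalemma} one more time, comparing
\[
  \eta_2-\eta_1 = \mathcal{G}(\eta_2,\eps_2)-\mathcal{G}(\eta_1,\eps_1)
  = \bigl[\mathcal{G}(\eta_2,\eps_2)-\mathcal{G}(\eta_1,\eps_2)\bigr] + \bigl[\mathcal{G}(\eta_1,\eps_2)-\mathcal{G}(\eta_1,\eps_1)\bigr].
\]
The first bracket is controlled by $\kappa\|\eta_2-\eta_1\|_{L^\infty}$ exactly as in Lemma~\ref{etalemma}. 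For the second bracket, the $\eps$-dependence of $\mathcal{G}(\eta_1,\cdot)$ enters through the explicit $\eps f(W)$ term, the explicit $\eps$ in $N(\eta_1)$, and $\lambda=\lambda(\eps,\eta_1)$; each piece, passed through $(1+R_0(-\lambda^2)V)^{-1}R_0(-\lambda^2)$ in $L^\infty$ via \eqref{kernelinfty} and Lemma~\ref{fullresolvent} (orthogonality holds since $\lambda$ is chosen to enforce \eqref{orth}), contributes $O(|\eps_2-\eps_1|)$. Rearranging gives $\|\eta_2-\eta_1\|_{L^\infty}\lesssim |\eps_2-\eps_1|$.

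The main obstacle is the analog of \emph{Part III} of Lemma~\ref{etalemma}, where the operator $(1+R_0(-\lambda^2)V)^{-1}$ itself varies with $\eps$ through $\lambda$; as there, the loss of a factor $\lambda^{-1}\sim \eps^{-1}$ must be absorbed using $|\lambda_2^2-\lambda_1^2|\lesssim \eps|\lambda_2-\lambda_1|$ together with the (already established) bound $|\lambda_2-\lambda_1|\lesssim |\eps_2-\eps_1|$ coming from (i). All other contributions are routine repetitions of the resolvent estimates of Section~\ref{res} already worked out in Lemmas~\ref{lambdalemma}--\ref{etalemma}.
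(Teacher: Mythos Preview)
Your approach is correct and essentially the same as the paper's: both proofs rewrite $\lambda_2-\lambda_1$ via the fixed-point relation $\lambda=\mathcal{H}(\lambda)=(a+b)/c$, isolate the piece linear in $\eps_2-\eps_1$ coming from $a$, control the rest by the Lipschitz-in-$\lambda,\eta$ estimates already proved in Lemmas~\ref{lambdalemma}--\ref{lipshitz}, and close with an auxiliary bound $\|\eta_2-\eta_1\|_{L^\infty}\lesssim|\eps_2-\eps_1|$ obtained by rerunning the contraction of Lemma~\ref{etalemma}. The paper organizes this in two passes (a crude bound $|\lambda_2-\lambda_1|\le(\lambda^{(1)}+o(1))|\eps_2-\eps_1|$ first, then the refined statement), whereas you do it in one pass via the intermediate value $\lambda(\eps_1,\eta_2)$; this is a cosmetic difference. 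One small point to watch: your decomposition requires $\lambda(\eps_1,\eta_2)$ to exist, i.e.\ $\|\eta_2\|_{L^\infty}\le R\eps_1$, which is not automatic for arbitrary $0<\eps_1<\eps_2<\eps_0$. The paper sidesteps this by never invoking an intermediate fixed point, only evaluating the expressions $a(\eps,\lambda),b(\eps,\lambda,\eta),c(\lambda)$ at mixed arguments; in your framework you can simply note that the only application (Lemma~\ref{trivscale}) takes $\eps_2-\eps_1\ll\eps_1$, or enlarge $R$ at the outset.
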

\begin{proof}
We first establish the estimate
\begin{align}\label{monostep}
|\lambda_2-\lambda_1| \leq \left( \lambda^{(1)} + o(1) \right) |\eps_2-\eps_1|
\end{align}
We write, as in Lemma \ref{lambdalemma} and Lemma \ref{lipshitz}
\begin{align*}
\lambda_2 - \lambda_1 &= \frac{ a(\eps_2,\lambda_2) + b(\eps_2,\lambda_2,\eta_2)}{c(\lambda_2)} 
- \frac{ a(\eps_1,\lambda_1) + b(\eps_1,\lambda_1,\eta_1)}{c(\lambda_1)} \\
&=\frac{a(\eps_2,\lambda_2) - a(\eps_1,\lambda_2) + b(\eps_2,\lambda_2,\eta_2) - b(\eps_1,\lambda_2,\eta_2)}{c(\lambda_2)}
\\ &\quad + \frac{ a(\eps_1,\lambda_2) + b(\eps_1,\lambda_2,\eta_2)}{c(\lambda_2)}
- \frac{ a(\eps_1,\lambda_1) + b(\eps_1,\lambda_1,\eta_1)}{c(\lambda_1)}. 
\end{align*}
The second line, containing only $\eps_1$ and not $\eps_2$, has been dealt with in the proof of Lemma \ref{lipshitz} and so there follows 
\begin{align*}
|\lambda_2 - \lambda_1| &\leq \left|
\frac{a(\eps_2,\lambda_2) - a(\eps_1,\lambda_2) + b(\eps_2,\lambda_2,\eta_2) - b(\eps_1,\lambda_2,\eta_2)}{c(\lambda_2)}
\right|
\\ & \quad 
+ o(1) \|\eta_2-\eta_1\|_{L^\infty} + o(1)|\lambda_2-\lambda_1| \\
& \leq |\eps_2-\eps_1| \left( \lambda^{(1)} + o(1) \right) + o(1)\|\eta_2-\eta_1\|_{L^\infty} + o(1)|\lambda_2-\lambda_1|
.
\end{align*}
For the $\eta$'s we estimate
\begin{align*}
\|\eta_2-\eta_1\|_{L^\infty} \lesssim o(1)\|\eta_2-\eta_1\|_{L^\infty} + |\lambda_2-\lambda_1| + |\eps_2-\eps_1|
\end{align*}
appealing to Lemma \ref{etalemma}.
So putting everything together we have 
\begin{align*}
|\lambda_2-\lambda_1| \leq \left( \lambda^{(1)} + o(1) \right) |\eps_2-\eps_1|  
\end{align*}
establishing (\ref{monostep}). 

Now we proceed to the more refined (\ref{monotone}). 
Observing the computations leading to (\ref{monostep}) we have 
\begin{align*}
|\lambda_2 - \lambda_1 - (\eps_2-\eps_1)\lambda^{(1)}|
\leq \left| \frac{a(\eps_2,\lambda_2) - a(\eps_1,\lambda_2)}{c(\lambda_2)} - (\eps_2-\eps_1)\lambda^{(1)}\right| \\
+o(1)\|\eta_2-\eta_1\|_{L^\infty} +o(1)|\lambda_2-\lambda_1|.  
\end{align*}
By (\ref{monostep}) the last two terms are of the correct size and so we focus on the first. 
We have
\begin{align*}
&\left| \frac{a(\eps_2,\lambda_2) - a(\eps_1,\lambda_2)}{c(\lambda_2)} - (\eps_2-\eps_1)\lambda^{(1)}\right| \\
& \quad \quad \quad \quad \quad \quad \quad \quad \quad \quad \quad \quad
=\left| (\eps_2-\eps_1) \left( \frac{\langle R_0(-\lambda_2^2) V \psi, W^p \rangle}
{\lambda_2 \langle R_0(-\lambda_2^2) V \psi,W \rangle} - \lambda^{(1)} \right) \right| \\
& \quad \quad \quad \quad \quad \quad \quad \quad \quad \quad \quad \quad
= o(1) |\eps_2-\eps_1|
\end{align*}
noting (\ref{inner1}) and (\ref{inner2}). 
And so, putting everything together we achieve 
\begin{align*}
|\lambda_2 - \lambda_1 - (\eps_2-\eps_1)\lambda^{(1)}|
\lesssim o(1)|\eps_2-\eps_1|
\end{align*}
as desired. 
\end{proof}

\section{
Variational Characterization
} \label{variational}

It is not clear from the construction that the solution $Q$ is
in any sense a {\it ground state} solution. 
It is also not clear that the solution is positive. 
In this section we first establish the existence of a ground state solution; one that minimizes the action subject to a constraint.
We then demonstrate that this minimizer must be our constructed solution. 
In this way we prove Theorem~\ref{theorem2}.

In this section we restrict our nonlinearity and take only 
$f(Q) = |Q|^{p-1} Q$ with $3<p<5$. 
Then the action is 
\begin{align}\label{action}
\mathcal{S}_{\eps,\omega}(u) =   
\frac{1}{2} \|\nabla u \|_{L^2}^2 - \frac{1}{6} \|u\|_{L^6}^6 - 
\frac{\eps}{p+1} \|u\|_{L^{p+1}}^{p+1} + \frac{\omega}{2} \|u\|_{L^2}^2.
\end{align}
We are interested in the constrained minimization problem 
\begin{align} \label{mini}
m_{\eps,\omega} 
:= \inf \{ \mathcal{S}_{\eps,\omega}(u) \; | \;  u \in H^1(\R^3) 
\setminus \{0\},  \; \mathcal{K}_\eps(u) = 0 \}
\end{align}
where 
\begin{align*}
\mathcal{K}_\eps(u) = \frac{d}{d\mu} \mathcal{S}_{\eps,\omega} (T_\mu u  ) \bigg|_{\mu=1} 
= \|\nabla u\|_{L^2}^2 - \|u\|_{L^6}^6 - \frac{3(p-1)}{2(p+1)} \eps \|u\|_{L^{p+1}}^{p+1}
\end{align*}
and $(T_\mu u)(x) = \mu^{3/2} u (\mu x)$ is the $L^2$ scaling operator. 
Note that for $Q_\eps = W + \eta$ as constructed in Theorem \ref{theorem1} we have $\mathcal{K}_\eps(Q_\eps)=0$ 
since any solution to (\ref{elliptic}) will satisfy $\mathcal{K}_\eps(Q) = 0$. 

Before addressing the minimization problem we investigate the implications of our generated solution $Q_\eps$ with specified $\eps$ and corresponding 
$\omega=\omega(\eps)$. 
In particular there is a scaling that generates for us additional solutions to the equation
\begin{align} \label{pureelliptic}
-\Delta Q - Q^5 - \eps |Q|^{p-1} Q + \omega Q = 0
\end{align}
with $3<p<5$.

\begin{remark}\label{scales}
For any $0 < \tilde{\eps} \leq \eps_0$, we have 
solutions to \eqref{pureelliptic} given by
\begin{align*}
  Q^\mu = \mu^{1/2} Q_{\tilde{\eps}}(\mu \cdot)
\end{align*} 
with $\eps = \mu^{(5-p)/2} \tilde{\eps}$ 
and $\omega = \mu^2 \omega(\tilde{\eps})$. So for any $\eps > 0$,
we obtain the family of solutions
\[
  \{ \; Q^\mu \; | \; \mu = \left( \frac{\eps}{\tilde{\eps}} \right)^{\frac{2}{5-p}},
  \; \tilde{\eps} \in (0, \eps_0] \; \} 
\]
with
\[
  \omega =  \left( \frac{\eps}{\tilde{\eps}} \right)^{\frac{4}{5-p}}
  \omega(\tilde{\eps}) \; \in \; \left[ \left( \frac{\eps_0}{\tilde{\eps_0}} 
  \right)^{\frac{4}{5-p}} \omega(\tilde{\eps_0}), \; \infty \right)
\]   
since  as $\tilde{\eps} \downarrow 0$, 
$\left( \frac{\eps}{\tilde{\eps}} \right)^{\frac{4}{5-p}} \omega(\tilde{\eps})
\; \sim \; \tilde{\eps}^{\frac{2(3-p)}{5-p}} \to \infty$.
\end{remark}

We now address the minimization problem by first addressing the existence of a minimizer. 
\begin{lemma} \label{eminimizer}
Take $3<p<5$. 
Let $Q = Q_\eps$ solving \eqref{pureelliptic} with $\omega = \omega(\eps)$ 
be as constructed in Theorem \ref{theorem1}. 
There exists $\eps_0 > 0$ such that for $0<\eps \leq \eps_0$
we have
\begin{align*}
\mathcal{S}_{\eps,\omega(\eps)} (Q_\eps) 
<  \frac{1}{3}\|W\|_{L^6}^6 
= \mathcal{S}_{0,0}(W). 
\end{align*}
It follows, as in Proposition 2.1 of \cite{Slim}, which is in turn based on the earlier 
\cite{Brezis}, 
that the variational problem \eqref{mini} 
with $\omega = \omega(\eps)$ 
admits a positive, radially-symmetric minimizer, which moreover
solves~\eqref{pureelliptic}.
\end{lemma}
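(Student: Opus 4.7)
The proof splits into two pieces: (i) verifying the strict inequality $\mathcal{S}_{\eps,\omega(\eps)}(Q_\eps) < \tfrac{1}{3}\|W\|_{L^6}^6$, which then (ii) triggers a standard variational scheme producing the minimizer. My plan is to use $Q_\eps$ itself as a test function (it is admissible since $\mathcal{K}_\eps(Q_\eps)=0$, being a solution of \eqref{pureelliptic}), and to leverage both Pohozaev identities. Combining $\mathcal{K}_\eps(Q_\eps)=0$, which gives $\|\nabla Q_\eps\|_{L^2}^2 = \|Q_\eps\|_{L^6}^6 + \tfrac{3(p-1)}{2(p+1)}\eps\|Q_\eps\|_{L^{p+1}}^{p+1}$, with the $L^2$-scaling Pohozaev $\omega\|Q_\eps\|_{L^2}^2 = \tfrac{5-p}{2(p+1)}\eps\|Q_\eps\|_{L^{p+1}}^{p+1}$, one finds after routine algebra the clean identity $\mathcal{S}_{\eps,\omega(\eps)}(Q_\eps) = \tfrac{1}{3}\|\nabla Q_\eps\|_{L^2}^2$. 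Since $\|\nabla W\|_{L^2}^2 = \|W\|_{L^6}^6$, the target inequality is equivalent to $\|\nabla Q_\eps\|_{L^2}^2 < \|\nabla W\|_{L^2}^2$.

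Writing $Q_\eps = W+\eta$ and combining with $\mathcal{K}_\eps(Q_\eps)=0$ one more time gives the more convenient form
\[
\|\nabla Q_\eps\|_{L^2}^2 - \|\nabla W\|_{L^2}^2 = (\|Q_\eps\|_{L^6}^6 - \|W\|_{L^6}^6) + \tfrac{3(p-1)}{2(p+1)}\eps\|Q_\eps\|_{L^{p+1}}^{p+1}.
\]
The Taylor remainder in $\|Q_\eps\|_{L^6}^6 = \|W\|_{L^6}^6 + 6\int W^5\eta + \cdots$ is $O(\eps^2)$ using $\|\eta\|_{L^\infty}\lesssim\eps$ together with $W\in L^q$ for $q>3$ (the boundary-layer contribution from $|x|\gtrsim 1/\eps$, where $\eta\sim -W$, is integrable against $W^4$ and smaller). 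To compute the leading order of $\int W^5\eta$, I test the equation $(H+\lambda^2)\eta = -\lambda^2 W + \eps W^p + N(\eta)$ against $W$: using $HW=-4W^5$ and integration by parts (justified because the boundary term $\int_{\partial B_R}\eta\,\partial_r W\,dS \to 0$ as $R\to\infty$ for radial $\eta$), and absorbing the formally divergent $\lambda^2\int W^2$ into the convergent combination $\lambda^2\int W Q_\eps$, yields
\[
-4\int W^5\eta \;=\; -\lambda^2\int W Q_\eps + \eps\int W^{p+1} + \int W\,N(\eta),
\]
where $\int W\,N(\eta) = O(\eps^2)$. Writing $\lambda^2\int W Q_\eps = \omega\|Q_\eps\|_{L^2}^2 - \omega\int Q_\eps\eta$ and invoking both the $L^2$-Pohozaev identity and the control $\omega\int Q_\eps\eta = -\|\nabla\eta\|^2 + O(\eps^{3/2})$ (obtained by pairing the $\eta$-equation against $\eta$ and grouping divergences into $\omega\int Q_\eps\eta$), the various $\|\nabla\eta\|^2$ contributions cancel and the leading-order result is
\[
\|\nabla Q_\eps\|_{L^2}^2 - \|\nabla W\|_{L^2}^2 \;=\; -\tfrac{3(p-1)}{4(p+1)}\eps\|W\|_{L^{p+1}}^{p+1} + o(\eps),
\]
which is strictly negative for $\eps$ small.

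For the existence of a minimizer I would invoke the concentration-compactness scheme of Brezis-Nirenberg, implemented in this setting as in Proposition 2.1 of \cite{Slim}. The strict inequality $m_{\eps,\omega(\eps)} \leq \mathcal{S}_{\eps,\omega(\eps)}(Q_\eps) < \tfrac{1}{3}\|W\|_{L^6}^6$ is precisely what is needed to rule out the bubbling-off of an Aubin-Talenti profile in a minimizing sequence. Radial symmetry and non-negativity come for free from Schwarz rearrangement (which preserves $L^q$ norms and does not increase $\|\nabla\cdot\|_{L^2}$); positivity then follows from the strong maximum principle applied to the Euler-Lagrange equation. The Lagrange multiplier in that equation is shown to equal $1$ by testing with $T_\mu$-scaling: the fact that $\tfrac{d}{d\mu}\mathcal{S}_{\eps,\omega}(T_\mu u)|_{\mu=1}=\mathcal{K}_\eps(u)$ forces the multiplier to be unity, so the minimizer solves \eqref{pureelliptic}.

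The main obstacle is the leading-order computation of $\int W^5\eta$ in the middle paragraph: the divergence of $\int W^2$ and $\int W\eta$ must be carefully grouped into convergent combinations $\int W Q_\eps$ and $\int Q_\eps\eta$, and the apparent $O(\eps)$ contributions from $\|\nabla\eta\|^2$ must be tracked through two independent testings of the $\eta$-equation to verify they cancel in the final answer, leaving the favorable sign $-\tfrac{3(p-1)}{4(p+1)}\eps\|W\|_{L^{p+1}}^{p+1}$ intact.
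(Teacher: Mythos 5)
Your opening algebraic observation is correct and elegant: combining $\mathcal{K}_\eps(Q_\eps)=0$ with the $L^2$-scaling Pohozaev identity $\omega\|Q_\eps\|_{L^2}^2 = \frac{5-p}{2(p+1)}\eps\|Q_\eps\|_{L^{p+1}}^{p+1}$ does indeed give the clean identity $\mathcal{S}_{\eps,\omega(\eps)}(Q_\eps)=\frac{1}{3}\|\nabla Q_\eps\|_{L^2}^2$, and the paper's own manipulations (using \eqref{theotherone} to write $\mathcal{S}=\frac{1}{3}\int Q^6+\frac{p-1}{2(p+1)}\eps\int|Q|^{p+1}$) are equivalent. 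Both routes reduce the problem to computing the sign of $\int W^5\eta$ modulo lower-order corrections.

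The gap is in your middle paragraph, and it is a genuine one: the scheme you describe --- testing the $\eta$-equation against $W$, testing it against $\eta$, and feeding in the Pohozaev identities --- is circular and cannot determine $\int W^5\eta$ to the required $O(\eps)$ precision. Testing against $W$ gives
$-4\int W^5\eta = -\lambda^2\int WQ_\eps + \eps\int W^{p+1} + O(\eps^2)$, and writing $\lambda^2\int WQ_\eps=\omega\|Q_\eps\|_2^2-\omega\int Q_\eps\eta$, the Pohozaev handles the first piece, while testing against $\eta$ gives $\omega\int Q_\eps\eta = -\|\nabla\eta\|_{L^2}^2+O(\eps^2)$, so you are left with one equation in the two unknowns $\int W^5\eta$ and $\|\nabla\eta\|_{L^2}^2$, both of genuine size $O(\eps)$. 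The only other relation available from these pairings is $\mathcal{K}_\eps(Q_\eps)=0$ expanded as $\|\nabla\eta\|_{L^2}^2 = 4\int W^5\eta + \frac{3(p-1)}{2(p+1)}\eps\int W^{p+1}+O(\eps^2)$; substituting one into the other yields $0=0$ at order $\eps$. The constraints are consistent but degenerate, and the $\|\nabla\eta\|_{L^2}^2$ contributions do \emph{not} cancel --- they survive and remain undetermined. A symptom of the problem is that your claimed coefficient $-\frac{3(p-1)}{4(p+1)}$ does not vanish at $p=3$, whereas the correct coefficient is $-\frac{3(p-3)}{2(p+1)}$ (so that $\mathcal{S}_{\eps,\omega}(Q_\eps)-\frac{1}{3}\int W^6 = -\frac{p-3}{2(p+1)}\eps\int W^{p+1}+O(\eps^{2^-})$, vanishing at $p=3$); the paper explicitly remarks on this vanishing as a structural feature. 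The extra information required comes from the explicit resolvent formula $\eta=(1+R_0(-\lambda^2)V)^{-1}R_0(-\lambda^2)\mathcal{F}$ and the asymptotics of Lemma~\ref{fullresolvent}: in the paper, $2\int W^5\eta$ is computed by pairing $W^5$ against the resolvent, using $(1+VG_0)W^5=-4W^5$, the orthogonality $\Lambda W\perp W^5$, and the leading-order asymptotics $\lambda\langle R_0(-\lambda^2)g,W\rangle=\sqrt{3}\int g+O(\lambda)$ from \eqref{inner2}. These are the ingredients your proposal is missing, and there does not appear to be a shortcut around them via Pohozaev-type bookkeeping alone.

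Your last paragraph, invoking Br\'ezis-Nirenberg-type concentration-compactness and Schwarz rearrangement to produce a positive radial minimizer once the strict inequality is in hand, is the standard route and matches the paper's citation of Proposition 2.1 of \cite{Slim}.
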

\begin{remark}
Strictly speaking, the variational problem~\eqref{mini} produces 
a non-negative minimizer (\cite{Slim}). Since it is then a smooth, 
radially-symmetric solution of~\eqref{pureelliptic}, it follows from 
standard ODE theory that the minimizer is strictly positive.   
\end{remark}

\begin{proof}
We compute directly, ignoring higher order contributions. 
Using (\ref{theotherone}) we write the action as 
\begin{align*}
\mathcal{S}_{\eps,\omega}(Q) &= \frac{1}{3} \int Q^6 + \frac{p-1}{2(p+1)} 
\eps \int |Q|^{p+1} \\
& = \frac{1}{3} \int (W+\eta)^6 + \frac{p-1}{2(p+1)} \eps \int |W+\eta|^{p+1}. 
\end{align*}
Rearranging we have
\begin{align*}
\mathcal{S}_{\eps,\omega}(Q) - \frac{1}{3} \int W^6 = 2 \int W^5 \eta + \frac{p-1}{2(p+1)} \eps \int W^{p+1} + O(\eps^2) 
\end{align*}
where the higher order terms are controlled for $3<p<5$:
\begin{itemize}
\item $\|W^4 \eta^2\|_{L^1} \aleq \|W^4\|_{L^1}\|\eta\|_{L^\infty}^2 \aleq \eps^2 $
\item $\|\eta^6\|_{L^1} \aleq \|\eta\|_{L^6}^6 \aleq \eps^3  $
\item $ \eps\|W^p \eta\|_{L^1} \aleq \eps\|W^p\|_{L^1} \|\eta\|_{L^\infty} \aleq \eps^2$
\item $ \eps \|\eta^{p+1}\|_{L^1} \aleq \eps \|\eta\|_{L^{p+1}}^{p+1} \aleq \eps^{p-1} $. 
\end{itemize} 
We now compute 
\begin{align*}
2 \int W^5 \eta 
&= 2 \left\langle W^5, (H+\lambda^2)^{-1} \left( \eps W^p - \lambda^2 W + N(\eta) \right) \right\rangle  \\
& = 2 \left\langle W^5, (1 + R_0(-\lambda^2)V)^{-1} \bar{P}R_0(-\lambda^2) \left( \eps W^p - \lambda^2 W + N(\eta) \right) \right\rangle
\end{align*}
where we have inserted the definition of $\eta$ from (\ref{eta}) and so identify the two leading order terms. 
There is no problem to also insert the projection $\bar{P}$ from (\ref{projections}) since we have the orthogonality condition (\ref{orth}) by the way we defined $\eps$, $\lambda$, $\eta$. 

We approximate in turn writing only $R_0$ for $R_0(-\lambda^2)$. 
In what follows we use the operators $(1 + VG_0)^{-1}$ and $(1 + VR_0)^{-1}$. The former as acts on the spaces 
\begin{align*}
(1 + VG_0)^{-1} : L^1 \cap (\Lambda W)^{\perp} \to L^1 \cap (1)^{\perp}
\end{align*}
and the later has the expansion 
\begin{align*}
(1 + VR_0)^{-1} = \frac{1}{\lambda} \langle \Lambda W, \cdot \rangle V \Lambda W + O(1) 
\end{align*}
in $L^1$. 
We record here also the adjoint of $\bar{P}$:
\begin{align*}
\bar{P}^* = 1 - P^*, \quad \quad \quad \quad P^* = \frac{\langle \Lambda W, \cdot \rangle}{\int V(\Lambda W)^2} V \Lambda W. 
\end{align*}

To estimate the first term write
\begin{align*}
2 \eps \langle W^5, (1 + R_0V)^{-1} \bar{P} R_0 W^p \rangle 
&= 2 \eps \langle (1 + VR_0)^{-1} W^5,  \bar{P} R_0 W^p \rangle \\
& = 2 \eps \langle (1 + VG_0)^{-1} W^5,  \bar{P} R_0 W^p \rangle + O(\eps^2). 
\end{align*}
The error is controlled with a resolvent identity:   
\begin{align*}
 \eps &\left| \left\langle \left( (1 + VR_0)^{-1} - (1 + VG_0)^{-1} \right) W^5,  \bar{P} R_0 W^p \right\rangle \right| \\
&=  \eps \left| \left\langle (1 + VR_0)^{-1} V (G_0 - R_0) (1 + VG_0)^{-1} W^5, \bar{P} R_0 W^p \right\rangle \right| \\
& =  \eps \left| \left\langle \bar{P}^* (1 + VR_0)^{-1} V (G_0 - R_0) \left( -W^5/4 + V \Lambda W/2 \right) ,  R_0 W^p \right\rangle \right| \\
& \aleq \eps \left\|\bar{P}^* (1 + VR_0)^{-1} V (G_0 - R_0) \left( -W^5/4 + V \Lambda W/2 \right)  \right\|_{L^1} \left\| R_0 W^p \right\|_{L^\infty} \\
& \aleq \eps \left\| V \bar{R} \left( -W^5/4 + V \Lambda W/2 \right) \right\|_{L^1} \|W^p \|_{L^{3/2^-} \cap L^{3/2^+}} \\
& \aleq \eps \lambda \\
& \aleq \eps^2
\end{align*}
where we have written 
\[
  G_0 - R_0 = \lambda G_1 + \bar{R}, \quad
  \bar{R} = \la^2 \tilde{R},
\]
observed that 
$G_1 (-W^5/4 + V \Lambda W/2) = 0$, since $(-W^5/4 + V \Lambda W/2) \perp 1$, and have estimated 
\begin{align*}
\left\| V \bar{R} \left( -W^5/4 + V \Lambda W/2 \right) \right\|_{L^1} 
\aleq \int \langle x \rangle^{-1} dx \int \lambda \frac{|\lambda y|}{\langle \lambda y \rangle} \langle x - y \rangle^{-5} dy \aleq \lambda . 
\end{align*}
Continuing, we have 
\begin{align*}
2 \eps \langle W^5, (1 + R_0V)^{-1} \bar{P} R_0 W^p \rangle 
&= 2 \eps \langle \bar{P}^*(1 + VG_0)^{-1} W^5,  R_0 W^p \rangle + O(\eps^2) \\
&= -\frac{1}{2} \eps \langle W^5, R_0 W^p \rangle + O(\eps^2)
\end{align*}
noting that $ (1 + VG_0)W^5 = W^5 - 5W^4 (-\Delta)^{-1} W^5 = -4W^5 $ since $-\Delta W = W^5$
and that the $\bar{P}^{*}$ can be dropped since $ \Lambda W \perp W^5$ (Remark \ref{nosup}). 
So
\begin{align*}
2 \eps \langle W^5, (1 + R_0V)^{-1} \bar{P} R_0 W^p \rangle 
& = -\frac{1}{2} \eps \langle R_0 W^5, W^p \rangle  + O(\eps^2) \\
& = -\frac{1}{2} \eps \langle G_0 W^5, W^p \rangle + O(\eps^2) \\
& =- \frac{1}{2} \eps \langle W, W^p \rangle + O(\eps^2) \\
& =- \frac{1}{2} \eps \int W^{p+1} + O(\eps^2) 
\end{align*}
where the other error term is bounded:
\begin{align*}
\eps \left| \langle (R_0 - G_0)W^5, W^p \rangle \right|
\aleq \eps \la^2 \left| \langle R_0 G_0 W^5, W^p \rangle \right|
\aleq \eps \la^2 \left| \langle R_0 W, W^p \rangle \right| 
\aleq \eps^2
\end{align*}
observing the computations that produce (\ref{inner2}).  	
Indeed, in (\ref{inner2}) we achieved $ \lambda \langle R_0 (V \psi), W \rangle = \sqrt{3} \int (V \psi) + O(\lambda)$ so our last bound above comes from replacing $V \psi$ with $W^p$. 

For the second term we proceed in a similar manner
\begin{align*}
-2 \la^2 \langle W^5, (1 + R_0V)^{-1} \bar{P} R_0 W \rangle 
&= \frac{1}{2} \la^2 \langle W^5, R_0 W \rangle + O(\eps^{2^-}) \\
&= \frac{1}{2} \la \sqrt{3} \int W^5 + O(\eps^{2^-}) \\
& = 6 \pi \lambda + O(\eps^{2^-}) \\ 
& = - \eps \langle \Lambda W, W^p \rangle + O(\eps^{2^-}) \\
& = \eps \left( \frac{3}{p+1} - \frac{1}{2} \right) \int W^{p+1}+ O(\eps^{2^-})
\end{align*}
where the first equality is just as in the previous computation, and the 
second comes from replacing $V \psi$ in~\eqref{inner2} with $W^5$. 
The error term coming from the difference of the resolvents is similar. Note
\begin{align*}
\la^2 &\left| \left\langle \left( (1 + VR_0)^{-1} - (1 + VG_0)^{-1} \right) W^5,  \bar{P} R_0 W \right\rangle \right| \\
& \aleq \la^2 \left\|\bar{P}^* (1 + VR_0)^{-1} V (G_0 - R_0) \left( -W^5/4 + V \Lambda W/2 \right)  \right\|_{L^1} \left\| R_0 W \right\|_{L^\infty} \\
& \aleq \lambda^3 \left\| R_0 W \right\|_{L^\infty} \\
& \aleq \lambda^3 \lambda^{-1^-} \|W\|_{L^{3^+}} \\
& \aleq \lambda^{2^-} \\
& \aleq \eps^{2^-}. 
\end{align*}

The term coming from $N(\eta)$ is controlled similarly, and so, all together we have
\begin{align*}
\mathcal{S}_{\eps,\omega}(Q) - \frac{1}{3} \int W^6 
&= \left( \frac{3}{p+1} - \frac{1}{2} -\frac{1}{2}   + \frac{p-1}{2(p+1)} \right) \eps \int W^{p+1} + O(\eps^{2^-}) \\
&= -\frac{p-3}{2(p+1)} \eps \int W^{p+1} + O(\eps^{2^-})
\end{align*}
which is negative for $3<p<5$ and $\eps>0$ and small. 
We note that when $p=3$, this leading order term vanishes.
\end{proof}

\begin{lemma}\label{minseq}
Take $3<p<5$. 
Denote by $V=V_\eps$ a non-negative, radially-symmetric
minimizer for~\eqref{mini} with $\omega = \omega(\eps)$
(as established in Lemma \ref{eminimizer}).
Then for any $\eps_j \to 0$, $V_{\eps_j}$
is a minimizing sequence for the (unperturbed) variational problem
\begin{align}\label{0min}
\mathcal{S}_{0,0}(W) =  \min \{ \mathcal{S}_{0,0}(u) \; | \; 
u \in \dot{H}^1 \setminus \{0\}, \; \mathcal{K}_0(u) =0 \}
\end{align}
in the sense that
\[
  \mathcal{K}_0(V_{\eps_j}) \to 0, \qquad  
  \limsup_{\eps \to 0} \mathcal{S}_{0,0}(V_{\eps_j}) 
  \leq \mathcal{S}_{0,0}(W). 
\]
\end{lemma}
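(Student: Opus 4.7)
The plan is to combine three ingredients: (a) the constraint $\mathcal{K}_\eps(V_\eps)=0$, (b) the upper action bound from Lemma~\ref{eminimizer}, and (c) the Pohozaev identity $\mathcal{K}^{(0)}_{\eps,\omega}(V_\eps)=0$, which is available because Lemma~\ref{eminimizer} guarantees $V_\eps$ solves~\eqref{pureelliptic}.

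First I would use $\mathcal{K}_\eps(V_\eps)=0$ to eliminate $\|\nabla V_\eps\|_{L^2}^2$ in the action, giving
\[
\mathcal{S}_{\eps,\omega}(V_\eps) = \frac{1}{3}\|V_\eps\|_{L^6}^6 + \frac{3p-7}{4(p+1)}\eps\|V_\eps\|_{L^{p+1}}^{p+1} + \frac{\omega}{2}\|V_\eps\|_{L^2}^2.
\]
For $3<p<5$ all three coefficients are strictly positive, so together with the bound $\mathcal{S}_{\eps,\omega(\eps)}(V_\eps) \leq \mathcal{S}_{\eps,\omega(\eps)}(Q_\eps) \leq \tfrac{1}{3}\|W\|_{L^6}^6 + O(\eps)$ from Lemma~\ref{eminimizer}, this yields uniform bounds on $\|V_\eps\|_{L^6}$, $\eps\|V_\eps\|_{L^{p+1}}^{p+1}$ and $\omega\|V_\eps\|_{L^2}^2$, and hence on $\|\nabla V_\eps\|_{L^2}$ via the constraint.

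Next, the Pohozaev identity reads $\omega\|V_\eps\|_{L^2}^2 = \tfrac{5-p}{2(p+1)}\eps\|V_\eps\|_{L^{p+1}}^{p+1}$. Combining with the Gagliardo--Nirenberg inequality
\[
\|V_\eps\|_{L^{p+1}}^{p+1} \aleq \|\nabla V_\eps\|_{L^2}^{3(p-1)/2}\|V_\eps\|_{L^2}^{(5-p)/2}
\]
and the uniform $\dot H^1$-bound gives the self-improving estimate $\omega\|V_\eps\|_{L^2}^{(p-1)/2} \aleq \eps$, so $\|V_\eps\|_{L^2} \aleq (\eps/\omega)^{2/(p-1)} \sim \eps^{-2/(p-1)}$ (using $\omega(\eps) \sim \eps^2$ from Theorem~\ref{theorem1}). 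Substituting back,
\[
\omega\|V_\eps\|_{L^2}^2 \aleq \eps^{2(p-3)/(p-1)} \to 0 \qquad (\eps \to 0),
\]
which, via Pohozaev, forces $\mathcal{K}_0(V_\eps) = \tfrac{3(p-1)}{2(p+1)}\eps\|V_\eps\|_{L^{p+1}}^{p+1} \to 0$.

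Finally, the identity $\mathcal{S}_{0,0}(u) = \tfrac{1}{3}\|u\|_{L^6}^6 + \tfrac{1}{2}\mathcal{K}_0(u)$, together with the $L^6$-bound $\|V_\eps\|_{L^6}^6 \leq \|W\|_{L^6}^6 + o(1)$ coming from the first step and the decay $\mathcal{K}_0(V_\eps) \to 0$, yields $\limsup_{\eps \to 0} \mathcal{S}_{0,0}(V_{\eps_j}) \leq \mathcal{S}_{0,0}(W)$. The main obstacle is the a priori possibility that $\|V_\eps\|_{L^2}$ blows up as $\eps \to 0$; turning the uniform bound on $\omega\|V_\eps\|_{L^2}^2$ into actual decay requires the self-improving Pohozaev--Gagliardo--Nirenberg combination, and this is also where the full range $3<p<5$ enters crucially, ensuring both the positivity of the coefficient $\tfrac{3p-7}{4(p+1)}$ in the first step and a positive decay exponent $\tfrac{2(p-3)}{p-1}$.
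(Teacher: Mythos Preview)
Your argument is correct and arrives at the same conclusion via essentially the same mechanism (uniform bounds on the positive pieces of the action, followed by Gagliardo--Nirenberg/interpolation to force $\eps\|V_\eps\|_{L^{p+1}}^{p+1}\to 0$), but there is one noteworthy difference from the paper. You invoke the second Pohozaev identity $\mathcal{K}^{(0)}_{\eps,\omega}(V_\eps)=0$ to link $\omega\|V_\eps\|_{L^2}^2$ and $\eps\|V_\eps\|_{L^{p+1}}^{p+1}$, whereas the paper never uses this: it works directly from the constraint $\mathcal{K}_\eps(V_\eps)=0$ and the auxiliary functional $\mathcal{I}_{\eps,\omega}=\mathcal{S}_{\eps,\omega}-\tfrac{1}{3}\mathcal{K}_\eps$, then estimates $\eps\int V^{p+1}\lesssim \eps\,\omega^{-(5-p)/4}\bigl(\omega\int V^2\bigr)^{(5-p)/4}$ by pure interpolation, using only the uniform bound on $\omega\int V^2$. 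Your route is legitimate (Lemma~\ref{eminimizer} does guarantee $V_\eps$ solves~\eqref{pureelliptic}, so Pohozaev applies) and in fact yields the sharper decay exponent $\eps^{2(p-3)/(p-1)}$ versus the paper's $\eps^{(p-3)/2}$; the paper's version is slightly cleaner in that it needs only the variational constraint, not that $V_\eps$ be a genuine solution. One cosmetic point: your bound $\|V_\eps\|_{L^6}^6\le\|W\|_{L^6}^6+o(1)$ can be upgraded to a strict inequality $\|V_\eps\|_{L^6}^6<\|W\|_{L^6}^6$ directly from Lemma~\ref{eminimizer}, so the $o(1)$ is unnecessary.
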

\begin{proof}
Since 
\[
  0 = K_\eps(V) = K_0(V) - \frac{3(p-1)}{2(p+1)} \eps \int V^{p+1}, 
\]
and by Lemma~\ref{eminimizer},
\begin{equation} \label{test}
  S_{0,0}(W) > m_{\eps,\omega(\eps)} = S_{\eps,\omega(\eps)}(V) = 
  S_{0,0}(V) - \frac{1}{p+1} \eps \int V^{p+1} + \frac{1}{2} \omega \int V^2,
\end{equation}
the lemma will be implied by the claim:
\begin{equation} \label{toZero}
  \eps \int V^{p+1} \to 0 \quad \mbox{ as } \eps \to 0. 
\end{equation}
To address the claim, first introduce the functional
\[
\begin{split}
 \mathcal{I}_{\eps,\omega}(u) &:= S_{\eps,\omega}(u) - \frac{1}{3} K_\eps(u) \\
 &= \frac{1}{6} \int |\nabla u|^2 + \frac{1}{6} \int |u|^6 + \frac{p - 3}{2(p+1)} \eps \int |u|^{p+1}
 + \frac{1}{2} \omega \int |u|^2 
\end{split}
\]
and observe that since $\mathcal{K}_\eps(V)=0$,
\[
  \mathcal{I}_{\eps,\omega(\eps)}(V) = \mathcal{S}_{\eps,\omega}(V) 
  < \mathcal{S}_{0,0}(W)
\]
and so the following quantities are all bounded uniformly in $\eps$:
\begin{align*}
\int |\nabla V|^2, \;\; \int V^6, \;\; \eps \int V^{p+1}, \;\; \omega \int V^2 \;\; \lesssim 1. 
\end{align*}
By interpolation
\begin{align*}
\eps \int V^{p+1} 
&\leq \eps \|V\|_{L^2}^{(5-p)/2} \|V\|_{L^6}^{3(p-1)/2} \\
& \lesssim \eps \omega^{-(5-p)/4} \left( \omega\int V^2 \right)^{(5-p)/4}.
\end{align*}
So~\eqref{toZero} holds, provided that $\eps^{4/(5-p)} \ll \omega$. 
Since $\omega \sim \eps^2$, this indeed holds for $3<p<5$. 

With the claim in hand we can finish the argument.  
The fact that $\mathcal{K}_0(V) \to 0$ now follows from $\mathcal{K}_\eps(V) = 0$. 
Also, from Lemma \ref{eminimizer} we know that for $\eps \geq 0$
\begin{align*}
\mathcal{S}_{0,0}(V) - \frac{\eps}{p+1} \int V^{p+1} \leq \mathcal{S}_{\eps,\omega}(V) \leq \mathcal{S}_{0,0}(W)
\end{align*}
and so 
$\limsup_{\eps \to 0} \mathcal{S}_{0,0}(V) \leq \mathcal{S}_{0,0}(W)$. 
\end{proof}

\begin{lemma}\label{H1conv}
For a sequence $\eps_j \downarrow 0$, let 
$V=V_{\eps_j}$ be corresponding non-negative, radially-symmetric
minimizers  of \eqref{mini} with $\omega = \omega(\eps_j)$. 
There is a subsequence $\eps_{j_k}$ and a 
scaling $\mu=\mu_{k}$ such that along the subsequence, 
\begin{align*}
V^{\mu} = \mu^{1/2} V (\mu \cdot) \to \nu W
\end{align*}
in $\dot{H}^1$ with $\nu=1$. 
\end{lemma}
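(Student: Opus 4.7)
The plan is to interpret Lemma \ref{minseq} as saying $\{V_{\eps_j}\}$ is an extremizing sequence for the sharp (radial) Sobolev embedding $\dot H^1(\R^3) \hookrightarrow L^6(\R^3)$, apply a radial profile decomposition to extract a strong $\dot H^1$ limit after rescaling by the $\dot H^1$-invariant symmetry $S_{\mu_j}$, and then identify the limit via Aubin--Talenti.

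First I would convert the conclusions of Lemma \ref{minseq} into norm convergence. The identity
$$
  \mathcal{S}_{0,0}(u) = \tfrac{1}{3}\|\nabla u\|_{L^2}^2 + \tfrac{1}{6}\mathcal{K}_0(u),
$$
combined with $\mathcal{K}_0(V_{\eps_j})\to 0$ and $\limsup \mathcal{S}_{0,0}(V_{\eps_j}) \leq \mathcal{S}_{0,0}(W) = \tfrac{1}{3}\|\nabla W\|_{L^2}^2$, forces $\limsup \|\nabla V_{\eps_j}\|_{L^2}^2 \leq \|\nabla W\|_{L^2}^2$. For the matching lower bound, the sharp Sobolev inequality $\|V\|_{L^6}^6 \leq \|\nabla V\|_{L^2}^6/\|\nabla W\|_{L^2}^4$ together with $\mathcal{K}_0(V_{\eps_j})\to 0$ gives
$$
  \|\nabla V_{\eps_j}\|_{L^2}^2 \bigl(1 - \|\nabla V_{\eps_j}\|_{L^2}^4/\|\nabla W\|_{L^2}^4\bigr) \leq o(1),
$$
which (after excluding the vanishing case $\|\nabla V_{\eps_j}\|_{L^2}\to 0$, which would drive $\mathcal{S}_{0,0}(V_{\eps_j})\to 0$ and contradict Lemma \ref{eminimizer} via $V_{\eps_j}$ being no worse than $Q_{\eps_j}$ as a test function) yields $\|\nabla V_{\eps_j}\|_{L^2} \to \|\nabla W\|_{L^2}$ and hence $\|V_{\eps_j}\|_{L^6} \to \|W\|_{L^6}$. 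Thus $\{V_{\eps_j}\}$ is a radial, non-negative extremizing sequence for the sharp Sobolev inequality.

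Next I would invoke the radial profile decomposition (G\'erard) for bounded sequences in $\dot H^1(\R^3)$. Since the Sobolev constant is saturated along the sequence, and vanishing in $L^6$ is excluded, strict subadditivity of the Sobolev quotient under multi-bubble splitting forces a single non-trivial profile: up to a subsequence, there exist $\mu_j > 0$ with $V^{\mu_j} := S_{\mu_j} V_{\eps_j} \to \phi$ strongly in $\dot H^1$. Non-negativity and radial symmetry pass to $\phi$ via a.e.\ subsequential convergence, and $\phi$ is a non-trivial extremizer of the Sobolev inequality, so by Aubin--Talenti $\phi = \nu W_{\tilde\mu}$ for some $\nu \geq 0$ and $\tilde\mu > 0$. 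Absorbing $\tilde\mu$ into the rescaling $\mu_j$, we obtain $V^{\mu_j} \to \nu W$ in $\dot H^1$; using that $S_\mu$ preserves the $\dot H^1$ norm,
$$
  \nu \|\nabla W\|_{L^2} = \|\nabla(\nu W)\|_{L^2} = \lim_j \|\nabla V^{\mu_j}\|_{L^2} = \lim_j \|\nabla V_{\eps_j}\|_{L^2} = \|\nabla W\|_{L^2},
$$
forcing $\nu = 1$.

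The main technical obstacle is the rigorous exclusion of multi-bubble splittings in the profile decomposition. In the radial category this is cleaner than in the general case: translations are absent, and two profiles supported at asymptotically separated scales $\mu_j^{(1)}/\mu_j^{(2)} \to 0$ or $\infty$ produce asymptotically orthogonal contributions in both $\dot H^1$ and $L^6$, so their joint presence would strictly violate the equality case of Sobolev. The usual orthogonality computation rules this out, and the remainder of the argument is straightforward.
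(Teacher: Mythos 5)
Your overall strategy — turn Lemma \ref{minseq} into strong $L^6$ and $\dot H^1$ norm convergence, run a radial profile decomposition, and identify the limit via Aubin--Talenti — is essentially the same route the paper takes (the paper cites G\'erard's bubble decomposition through the notes of Killip--Vi\c{s}an to get $\nu\in\{0,1\}$ directly). But there is a genuine gap at the single most delicate point: the exclusion of the vanishing alternative $\|\nabla V_{\eps_j}\|_{L^2}\to 0$, equivalently $\nu=0$.

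You assert that vanishing "would drive $\mathcal{S}_{0,0}(V_{\eps_j})\to 0$ and contradict Lemma \ref{eminimizer} via $V_{\eps_j}$ being no worse than $Q_{\eps_j}$ as a test function." This does not work. Lemma \ref{eminimizer} only supplies the \emph{upper} bound $m_{\eps,\omega}=\mathcal{S}_{\eps,\omega}(V_\eps)\leq\mathcal{S}_{\eps,\omega}(Q_\eps)<\mathcal{S}_{0,0}(W)$, and the comparison with $Q_\eps$ as a test function only strengthens that upper bound; it gives no lower bound on $m_{\eps,\omega}$ and in particular does not forbid $m_{\eps,\omega}$ (or $\mathcal{S}_{0,0}(V_\eps)$) from tending to $0$. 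Moreover, $\mathcal{S}_{0,0}(V_\eps)\to 0$ is not the same as $\mathcal{S}_{\eps,\omega}(V_\eps)\to 0$: since $\mathcal{K}_\eps(V_\eps)=0$ one has $\mathcal{S}_{\eps,\omega}(V_\eps)=\mathcal{I}_{\eps,\omega}(V_\eps)$, and even if the gradient, $L^6$, and $\eps\int V^{p+1}$ terms vanish, the $\tfrac12\omega\int V_\eps^2$ term is not controlled by this information and could remain bounded away from zero. So no contradiction is reached.

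The actual exclusion must use the quantitative scaling relation between $\eps$ and $\omega(\eps)$. Starting from $\mathcal{K}_\eps(V_\eps)=0$ and the Sobolev inequality one gets $\int|\nabla V_\eps|^2\lesssim\eps\int V_\eps^{p+1}$ once $\|\nabla V_\eps\|_{L^2}$ is small; then interpolation $\int V_\eps^{p+1}\lesssim \|V_\eps\|_{L^2}^{(5-p)/2}\|\nabla V_\eps\|_{L^2}^{3(p-1)/2}$ together with the uniform bound on $\omega\int V_\eps^2$ (from $\mathcal{I}_{\eps,\omega}(V_\eps)\leq\mathcal{S}_{0,0}(W)$) gives
\[
\Bigl(\textstyle\int|\nabla V_\eps|^2\Bigr)^{(7-3p)/4}\lesssim \eps\,\omega^{-(5-p)/4}\Bigl(\omega\textstyle\int V_\eps^2\Bigr)^{(5-p)/4}.
\]
Since $(7-3p)/4<0$ for $p>7/3$, the left side tends to $\infty$ if $\|\nabla V_\eps\|\to 0$, while the right side tends to $0$ because $\eps^{4/(5-p)}\ll\omega\sim\eps^2$ for $p>3$. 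That is the contradiction. This step is not cosmetic: it is exactly where the hypothesis $\omega=\omega(\eps)\sim\eps^2$ (as opposed to an arbitrary choice of $\omega$) enters, and your argument never uses it. Once vanishing is excluded, the rest of your write-up (dichotomy from the sharp Sobolev inequality, profile decomposition, Aubin--Talenti identification, and the $\dot H^1$-norm computation giving $\nu=1$) is sound and close to the paper's.
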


\begin{proof}
The result with $\nu=1$ or $\nu=0$ follows from the bubble decomposition of G\'erard \cite{Gerard}
(see eg. the notes of Killip and Vi\c{s}an \cite{Killip}, in particular Theorem 4.7 and the proof of Theorem 4.4). 
Therefore we need only eliminate the possibility that $\nu=0$. 

If $\nu=0$ then $\int |\nabla V_\eps|^2 \to 0$ (along the given subsequence). 
Then by the Sobolev inequality,
\begin{align*}
  0 = \mathcal{K}_\eps(V_\eps) = 
  (1 + o(1)) \int |\nabla V_\eps|^2 - \frac{3(p-1)}{2(p+1)} \eps \int V_\eps^{p+1}, 
\end{align*}
and so 
\begin{align*}
\int |\nabla V_\eps|^2 \aleq \eps \int V_\eps^{p+1}. 
\end{align*}
However, we have already seen
\begin{align*}
\int |\nabla V_\eps|^2 \lesssim \eps \int V_\eps^{p+1} 
\lesssim \eps \omega^{-(5-p)/4} \left( \omega \int V_\eps^2 \right)^{(5-p)/4} \left( \int |\nabla V_\eps|^2 \right)^{3(p-1)/4}
\end{align*}
via interpolation and so 
\begin{align*}
\left( \int |\nabla V_\eps|^2 \right)^{(7-3p)/4} 
\lesssim \eps \omega^{-(5-p)/4} \left( \omega \int V_\eps^2 \right)^{(5-p)/4}
\to 0
\end{align*}
as above. Note that $(7-3p)/4 = -3(p-7/3)/4 <0$. 
Hence $\nu=0$ is impossible and so we conclude that $\nu=1$. 
The result follows. 
\end{proof}
\begin{remark}
This lemma implies in particular that
for $V = V_\eps$, $\omega = \omega(\eps)$, 
$S_{0,0}(V) = S_{0,0}(V^\mu) \to S_{0,0}(W)$,
and so by~\eqref{test} and~\eqref{toZero},
\[
  \omega \int V^2 \;\; \to 0.
\]  
\end{remark}
\begin{remark}
Note that $V^\mu$ is a minimizer of the minimization problem \eqref{mini}, 
and a solution to \eqref{pureelliptic}, with $\eps$ and $\omega$ replaced with
\[ 
  \tilde{\eps} = \mu^{\frac{5-p}{2}} \eps, \qquad \tilde{\omega}=\mu^2 \omega.
\] 
Under this scaling the following properties are preserved:
\begin{align*}
\begin{split}
&\tilde{\eps}^{\frac{4}{5-p}} = \mu^2 \eps^{\frac{4}{5-p}} \ll \mu^2 \omega = \tilde{\omega} \\
&\tilde{\eps} \int (V^{\mu})^{p+1}  = \eps \int V^{p+1} \; \to 0 \\
&\tilde{\omega} \int (V^\mu)^2 = \omega \int V^2 \; \to 0. 
\end{split}
\end{align*}
Moreover,
\[
  \tilde{\eps} \to 0, \quad \tilde{\omega} \to 0,
\]
the latter since otherwise $\| V^\mu \|_{L^2} \to 0$ along some subsequence,
contradicting $V^\mu \to W \not\in L^2$ in $\dot H^1$, 
and then the former by the first relation above.
\end{remark}

\begin{lemma} \label{minconv}
Let 
\begin{align*}
  V^\mu = W + \tilde{\eta}, \quad 
  \|\tilde{\eta}\|_{\dot{H}^1} \to 0, \quad
  \tilde{\eps} \to 0
\end{align*}
be a sequence as provided by Lemma \ref{H1conv}.
There is a further scaling
\begin{align*}
  \nu = \nu_{\tilde{\eps}} = 1 + o(1) 
\end{align*}
so that 
\begin{align*}
\left( V^\mu \right)^\nu = W^\nu + \tilde{\eta}^\nu =: W + \hat\eta 
\end{align*}
retains $\|\hat{\eta}\|_{\dot{H}^1} \to 0$, but also satisfies the orthogonality condition
\begin{align} \label{orthhat}
0=\langle R_0(-\hat{\omega}) V \psi , \mathcal{F}(\hat{\eps}, \hat{\omega}, \hat{\eta}) \rangle 
\end{align}
with the corresponding $\hat{\eps} = \nu^{(5-p)/2} \tilde\eps $ and $\hat{\omega} = \nu^2 \tilde\omega$.
\end{lemma}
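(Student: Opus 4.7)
The strategy is to set up the orthogonality condition \eqref{orthhat} as a scalar equation $\Phi(\nu)=0$ in the single scaling parameter $\nu$, and solve it by the implicit function theorem around $\nu=1$, exploiting the fact that $V^\mu$ is already an \emph{exact} solution of \eqref{pureelliptic} to produce the required smallness of $\Phi(1)$.

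Define
\[
\Phi(\nu) := \langle R_0(-\nu^2 \tilde\omega)\, V\psi, \; \mathcal{F}(\hat\eps(\nu), \hat\omega(\nu), \hat\eta(\nu))\rangle,
\]
with $\hat\eps(\nu)=\nu^{(5-p)/2}\tilde\eps$, $\hat\omega(\nu)=\nu^2\tilde\omega$, and $\hat\eta(\nu)=S_\nu(W+\tilde\eta)-W$. Since $V^\mu=W+\tilde\eta$ solves \eqref{pureelliptic} with parameters $(\tilde\eps,\tilde\omega)$, the identity $(H+\tilde\omega)\tilde\eta=\mathcal{F}(\tilde\eps,\tilde\omega,\tilde\eta)$ holds. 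Using $H\psi=0$, which yields $R_0(-\tilde\omega) V\psi=-\psi+\tilde\omega R_0(-\tilde\omega)\psi$ and hence $(H+\tilde\omega) R_0(-\tilde\omega) V\psi=\tilde\omega V R_0(-\tilde\omega)\psi$, together with self-adjointness (the boundary terms vanish thanks to the exponential decay of $R_0(-\tilde\omega)V\psi$), we obtain
\[
\Phi(1) = \tilde\omega \, \langle V R_0(-\tilde\omega)\psi, \; \tilde\eta\rangle.
\]
The pointwise bound $|R_0(-\tilde\omega)\psi(x)|\aleq \min(1/\sqrt{\tilde\omega},\, 1/(\tilde\omega|x|))$ (reflecting the $1/|x|$ decay of $\psi$) combined with the fast decay of $V$ gives $\|V R_0(-\tilde\omega)\psi\|_{L^{6/5}}\aleq 1/\sqrt{\tilde\omega}$, and hence
\[
|\Phi(1)| \aleq \sqrt{\tilde\omega}\, \|\tilde\eta\|_{L^6} \aleq \sqrt{\tilde\omega}\, \|\tilde\eta\|_{\dot H^1} = o(\sqrt{\tilde\omega}).
\]

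On the other hand, the inner-product expansions \eqref{inner1}, \eqref{inner2} together with nonlinear estimates as in \eqref{inner3} yield
\[
\Phi(\nu) = -A\nu\sqrt{\tilde\omega} + B\nu^{(5-p)/2}\tilde\eps + o(\sqrt{\tilde\omega}+\tilde\eps),
\]
with $A=2\sqrt{3\pi}$, $B=-\langle\psi,f(W)\rangle>0$, and $\lambda^{(1)}=B/A$. Matching with the estimate on $\Phi(1)$ forces $B\tilde\eps-A\sqrt{\tilde\omega}=o(\sqrt{\tilde\omega}+\tilde\eps)$, which (ruling out both $\tilde\eps\gg\sqrt{\tilde\omega}$ and $\tilde\eps\ll\sqrt{\tilde\omega}$) pins down $\sqrt{\tilde\omega}/\tilde\eps\to\lambda^{(1)}$. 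The derivative at $\nu=1$ then reads
\[
\partial_\nu \Phi(1) = -A\sqrt{\tilde\omega} + B\tfrac{5-p}{2}\tilde\eps + o(\sqrt{\tilde\omega}+\tilde\eps) = A\tfrac{3-p}{2}\sqrt{\tilde\omega}\,(1+o(1)),
\]
nondegenerate of size $\sqrt{\tilde\omega}$ since $3<p<5$. A contraction-mapping argument paralleling Lemma \ref{lambdalemma} then produces a unique zero $\nu^*$ with $|\nu^*-1|\aleq |\Phi(1)|/|\partial_\nu \Phi(1)|=o(1)$. The conclusion $\|\hat\eta\|_{\dot H^1}\to 0$ follows: scale invariance gives $\|\tilde\eta^{\nu^*}\|_{\dot H^1}=\|\tilde\eta\|_{\dot H^1}\to 0$, while continuity of $\nu\mapsto S_\nu W$ at $\nu=1$ gives $\|W^{\nu^*}-W\|_{\dot H^1}\to 0$.

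The main technical hurdle is the key estimate $|\Phi(1)|=o(\sqrt{\tilde\omega})$: the slow decay of $\psi$ makes $R_0(-\tilde\omega)\psi$ singular of size $1/\sqrt{\tilde\omega}$ as $\tilde\omega\to 0$ (another manifestation of the resonance), and this singularity is just barely absorbed by the $\tilde\omega$ prefactor produced by $(H+\tilde\omega)$ acting on $R_0(-\tilde\omega)V\psi$, leaving room for the $o(1)$ gain from the Sobolev-smallness of $\tilde\eta$.
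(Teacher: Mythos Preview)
Your key identity $\Phi(1)=\tilde\omega\langle VR_0(-\tilde\omega)\psi,\tilde\eta\rangle$ is exactly the right observation, and in fact it is the heart of the paper's proof as well. The gap is that you use it only at $\nu=1$, and then try to recover the $\nu$-dependence via the raw expansion of $\mathcal{F}$ and the estimates ``as in \eqref{inner3}''. Those estimates, however, assume $\|\eta\|_{L^\infty}\aleq\eps$, which you do not have here: at this stage you only know $\|\hat\eta\|_{L^6}=o(1)$. With only $L^6$ control, the contribution of $W^3\hat\eta^2$ to $\langle R_0(-\hat\omega)V\psi,N(\hat\eta)\rangle$ is merely $O(\|\hat\eta\|_{L^6}^2)=o(1)$, not $o(\sqrt{\tilde\omega}+\tilde\eps)$, since no rate on $\|\tilde\eta\|_{L^6}$ is available. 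This error then contaminates both the matching that was supposed to give $\sqrt{\tilde\omega}/\tilde\eps\to\lambda^{(1)}$ and the derivative $\partial_\nu\Phi(1)$, so the implicit-function step is not justified.

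The fix is already implicit in your own computation: because $S_\nu$ carries solutions of \eqref{pureelliptic} with parameters $(\tilde\eps,\tilde\omega)$ to solutions with $(\hat\eps,\hat\omega)$, the identity $(H+\hat\omega)\hat\eta=\mathcal{F}(\hat\eps,\hat\omega,\hat\eta)$ holds for \emph{every} $\nu$, not just $\nu=1$. Hence $\Phi(\nu)=\hat\omega\langle VR_0(-\hat\omega)\psi,\hat\eta\rangle$ for all $\nu$, and the nonlinear term $N(\hat\eta)$ never has to be estimated separately. Writing $\hat\eta=(W^\nu-W)+\tilde\eta^\nu$, Taylor-expanding $W^\nu-W=(\nu-1)\Lambda W+O_{L^6}((\nu-1)^2)$, and using $\|W^4R_0(-\hat\omega)\Lambda W\|_{L^{6/5}}\aleq\hat\omega^{-1/2}$ together with the computation (parallel to \eqref{inner2}) $\sqrt{\hat\omega}\langle\Lambda W,W^4R_0(-\hat\omega)\Lambda W\rangle\to\frac{6\sqrt{3}\pi}{5}$, one gets directly
\[
0=(\nu-1)\bigl(1+o(1)\bigr)+O\bigl((\nu-1)^2\bigr)+O\bigl(\|\tilde\eta\|_{L^6}\bigr),
\]
which is solvable for $\nu=1+o(1)$ without any a priori relation between $\tilde\omega$ and $\tilde\eps$. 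This is precisely what the paper does.
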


\begin{proof}
By the definitions~\eqref{preinvert} of 
$\mathcal{F}(\hat{\eps}, \hat{\omega}, \hat{\eta})$, and~\eqref{Vpsi_int} 
of $\psi$, we may rewrite the above inner-product as
\[
\langle R_0(-\hat{\omega}) V \psi , \mathcal{F}(\hat{\eps}, \hat{\omega}, \hat{\eta}) \rangle
= -\frac{5}{\sqrt{3}} \langle \hat{\eta}, (H + \hat{\omega}) R_0(-\hat{\omega}) W^4 \Lambda W \rangle 
\]
and observe from the resonance equation~\eqref{res_un_norm}
\[
  5 R_0(-\hat{\omega}) W^4 \Lambda W = \Lambda W - \hat{\omega} R_0(-\hat{\omega})
  \Lambda W
\]
and so 
\[
\begin{split}
  (H + \hat{\omega}) R_0(-\hat{\omega}) W^4 \Lambda W &=
  (-\Delta + \hat{\omega} - 5 W^4) R_0(-\hat{\omega}) W^4 \Lambda W \\ &=
  (1 - 5 W^4 R_0(-\hat{\omega})) W^4 \Lambda W = \hat{\omega} W^4 R_0(-\hat{\omega})
  \Lambda W
\end{split}
\]
so the desired orthogonality condition reads
\[
0 = \frac{1}{\sqrt{\hat{\omega}}} \langle \hat{\eta}, (H + \hat{\omega}) R_0(-\hat{\omega}) 
W^4 \Lambda W \rangle 
= \sqrt{\hat{\omega}} \langle W^{\nu} - W + \tilde\eta^\nu, W^4 R_0(-\hat{\omega}) 
\Lambda W \rangle.
\]
Now since $\Lambda W = \frac{d}{d \mu} W^\mu |_{\mu=1}$, by Taylor expansion
\[
  \| W^\nu - W - (\nu-1) \Lambda W \|_{L^6} \aleq (\nu-1)^2,
\]
and using~\eqref{kernelbddw}
\[
  \| W^4 R_0(-\hat{\omega}) \Lambda W \|_{L^{\frac{6}{5}}} \aleq
  \| R_0(-\hat{\omega}) \Lambda W \|_{L^\infty} \aleq
  \frac{1}{\sqrt{\hat{\omega}}},
\]
we arrive at
\[
  0 = (\nu - 1) \left( \sqrt{\hat{\omega}} \langle \Lambda W, \; W^4 R_0(-\hat{\omega}) 
  \Lambda W \rangle \right)
  + O((\nu-1)^2) + O(\| \tilde{\eta}^\nu \|_{L^6})
\]
Computations exactly as for (\ref{inner2}) lead to
\[
  \sqrt{\hat{\omega}} \langle \Lambda W, \; W^4 R_0(-\hat{\omega}) 
  \Lambda W \rangle = \frac{6 \sqrt{3} \pi}{5} + O(\sqrt{\hat{\omega}}),
\] 
and so the desired orthogonality condition reads
\[
  0 = (\nu - 1) (1 + o(1)) 
  + O((\nu-1)^2) + O(\| \tilde{\eta}^\nu \|_{L^6})
\]
which can therefore be solved for $\nu=1+o(1)$
using $\| \tilde{\eta}^\nu \|_{L^6} = o(1)$.
\end{proof}
The functions
\[
  W_{\hat{\eps}} := (V^\mu)^\nu = W + \hat{\eta}
\]
produced by Lemma~\ref{minconv} solve the minimization problem (\ref{mini}), 
and the PDE (\ref{pureelliptic}), with $\eps$ and $\omega$ replaced (respectively) 
by $\hat{\eps} \to 0$ and $\hat{\omega}$. 
Since $\nu_{\tilde{\eps}} = 1 + o(1)$, the properties
\[
  \hat{\eps}^{\frac{4}{5-p}} \ll \hat{\omega} \to 0, \quad
  \hat{\eps} \int W_{\hat{\eps}}^{p+1} \to 0, \quad
  \hat{\omega} \int W_{\hat{\eps}}^2 \to 0
\]
persist.

It remains to show that that $V_{\hat{\eps}}$ agrees with $Q_{\hat{\eps}}$ constructed in 
Theorem~\ref{theorem1}. First:
\begin{lemma}\label{etabounds}
For $3 < r \leq \infty$, and $\hat{\eps}$ sufficiently small,
\[
  \|\hat{\eta} \|_{L^r} \lesssim \hat{\eps} + \sqrt{\hat{\omega}}^{1-\frac{3}{r}}. 
\]
\end{lemma}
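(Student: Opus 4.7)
The plan is to follow the template of Lemma~\ref{etabound}. Since $W_{\hat{\eps}} = W + \hat{\eta}$ solves \eqref{pureelliptic} with parameters $(\hat{\eps}, \hat{\omega})$, the correction $\hat{\eta}$ satisfies $(H + \hat{\omega})\hat{\eta} = \hat{\mathcal{F}}$ with $\hat{\mathcal{F}} = -\hat{\omega}\,W + \hat{\eps}\,W^p + N(\hat{\eta})$. Invoking the orthogonality condition \eqref{orthhat} secured in Lemma~\ref{minconv}, Lemma~\ref{fullresolvent} removes the singular $1/\sqrt{\hat{\omega}}$ factor and yields
\[
\|\hat{\eta}\|_{L^r} \lesssim \|R_0(-\hat{\omega})\,\hat{\mathcal{F}}\|_{L^r}.
\]
The first step is then to estimate each contribution on the right. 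Splitting $W = \sqrt{3}/|x| + \tilde{W}$ and applying the weak-type bound \eqref{kernelbddw} gives $\hat{\omega}\,\|R_0(-\hat{\omega})W\|_{L^r} \lesssim \sqrt{\hat{\omega}}^{\,1-3/r}$, exactly as in Lemma~\ref{etabound}. For the second piece, since $3<p<5$ the function $W^p$ lies in appropriate $L^q$ spaces, and \eqref{kernelbdd} (or \eqref{kernelinfty} when $r=\infty$) yields $\hat{\eps}\,\|R_0(-\hat{\omega})W^p\|_{L^r} \lesssim \hat{\eps}$.

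For the nonlinear term $N(\hat{\eta})$, I would apply the pointwise bound \eqref{nonbdd} (with $p_1 = p_2 = p$) together with H\"older and the kernel estimates \eqref{kernelbddnw}--\eqref{kernelinfty}, paralleling the calculations in Lemma~\ref{etabound}. Each resulting contribution is of the form $c\,\|\hat{\eta}\|_{L^r}$, where $c$ is a product of small quantities: positive powers of $\|\hat{\eta}\|_{L^\infty}$ (from $W^3\hat{\eta}^2$, $\hat{\eta}^5$, $\hat{\eps}\,\hat{\eta}^p$), or of $\hat{\eps}$ (from $\hat{\eps}\,\hat{\eta}\,W^{p-1}$). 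Provided $c$ can be made arbitrarily small, these terms absorb into the left-hand side and the claimed bound $\|\hat{\eta}\|_{L^r} \lesssim \hat{\eps} + \sqrt{\hat{\omega}}^{\,1-3/r}$ follows.

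The main obstacle is therefore to supply a qualitative smallness bound $\|\hat{\eta}\|_{L^\infty} \to 0$. This is not automatic from $\|\hat{\eta}\|_{\dot{H}^1} \to 0$, inherited via Lemmas~\ref{H1conv} and~\ref{minconv}, but can be obtained by a bootstrap. Starting with $\|\hat{\eta}\|_{L^6} \to 0$ from Sobolev embedding, the integral representation $\hat{\eta} = (1+R_0(-\hat{\omega})V)^{-1}R_0(-\hat{\omega})\,\hat{\mathcal{F}}$ combined with Lemma~\ref{fullresolvent} and the kernel bounds successively upgrades $\hat{\eta}$ to higher $L^r$ spaces and ultimately to $L^\infty$, each step producing a qualitatively small quantity (since the driving linear terms $\hat{\eps}$ and $\sqrt{\hat{\omega}}^{\,1-3/r}$ both tend to zero). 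Once $\|\hat{\eta}\|_{L^\infty}$ is small enough, the nonlinear terms in the main estimate absorb and the stated bound is established.
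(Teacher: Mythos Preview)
Your overall plan is right, and the paper follows the same outline: invert using~\eqref{orthhat} and Lemma~\ref{fullresolvent}, estimate the linear pieces exactly as you say, and absorb the nonlinear terms. The difference is in how the absorption step is carried out, and this matters.

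You propose to follow Lemma~\ref{etabound} literally, which extracts smallness from $\|\hat\eta\|_{L^\infty}$, and then to manufacture that $L^\infty$ smallness by a bootstrap from $L^6$. The paper avoids the bootstrap almost entirely by choosing different H\"older splits so that the small factor comes straight from $\|\hat\eta\|_{L^6}$ (already known via $\|\hat\eta\|_{\dot H^1}\to 0$). For instance, for $3<r<\infty$ and $1/q=1/r+2/3$ one writes $\|W^3\hat\eta^2\|_{L^q}\le\|W\|_{L^6}^3\|\hat\eta\|_{L^6}\|\hat\eta\|_{L^r}$ and $\|\hat\eta^5\|_{L^q}\le\|\hat\eta\|_{L^6}^4\|\hat\eta\|_{L^r}$, which close immediately. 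The case $r=\infty$ then uses the just-proved $L^{6\pm}$ bound for the $\hat\eta^5$ term --- a single extra step rather than a full iteration.

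There is also a genuine gap in your treatment of the $\hat\eps\,\hat\eta^{p}$ term. If you follow Lemma~\ref{etabound} you obtain a coefficient $\hat\eps\,\hat\omega^{-1}\|\hat\eta\|_{L^\infty}^{p-1}$, and nothing you have said makes this small: you do not yet know $\hat\omega\sim\hat\eps^2$, only $\hat\omega\gg\hat\eps^{4/(5-p)}$, so $\hat\eps\,\hat\omega^{-1}$ may diverge, and a merely qualitative bound $\|\hat\eta\|_{L^\infty}=o(1)$ cannot compensate. The paper handles this term differently: placing $p-1$ copies of $\hat\eta$ in $L^6$ and one in $L^r$, the kernel bound~\eqref{kernelbddnw} produces the coefficient $\hat\eps\,\hat\omega^{-(5-p)/4}\|\hat\eta\|_{L^6}^{p-1}$, and the relation $\hat\eps^{4/(5-p)}\ll\hat\omega$ (inherited through the rescalings after Lemma~\ref{minseq}) makes $\hat\eps\,\hat\omega^{-(5-p)/4}=o(1)$. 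You need to invoke this relation explicitly; without it the argument does not close.
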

\begin{proof}
Since $W_{\hat{\eps}}$ is a solution of (\ref{elliptic}), the 
remainder $\hat{\eta}$ must satisfy (\ref{eta}). So
\begin{align*}
\|\hat{\eta}\|_{L^r} &= \|(H+\hat{\omega})^{-1} \left( -\hat{\omega} W + 
\hat{\eps} f(W) + N(\hat{\eta}) \right) \|_{L^r} \\
&\lesssim \hat{\eps} + \sqrt{\hat{\omega}}^{1-\frac{3}{r}}
+ \|R_0(-\hat{\omega}) N(\hat{\eta})\|_{L^r} 
\end{align*}
using~\eqref{orthhat} and
after observing the computations of Lemma \ref{etalemma}. 
We now establish the required bounds on the remainder,
beginning with $3 < r < \infty$. Let $q \in (1, \frac{3}{2})$ satisfy
$\frac{1}{q} - \frac{1}{r} = \frac{2}{3}$:
\begin{itemize} 
\item 
$ \ds
\| R_0(-\hat{\omega}) \hat{\eps} W^{p-1} \hat{\eta}\|_{L^r}  
\lesssim \hat{\eps} \| W^{p-1} \hat{\eta}\|_{L^q} 
\lesssim \hat{\eps} \|W\|_{L^{\frac{3}{2}(p-1)}}^{p-1} \| \hat{\eta} \|_{L^r} 
\lesssim \hat{\eps} \|\hat{\eta}\|_{L^r}
$
\item 
$ \ds
\|R_0(-\hat{\omega}) \hat{\eps} \hat{\eta}^p \|_{L^r} 
\lesssim \hat{\eps} \hat{\omega}^{-\frac{5-p}{4}} \|\hat{\eta}^p \|_{L^{\frac{6r}{6 + (p-1)r}}}
\lesssim o(1) \|\hat{\eta}\|_{L^6}^{p-1} \|\hat{\eta}\|_{L^r}
\lesssim o(1) \| \hat{\eta} \|_{L^r} 
$
\item
$\ds 
\| R_0(-\hat{\omega}) W^3 \hat{\eta}^5\|_{L^r} 
\lesssim \|W^3 \hat{\eta}^2\|_{L^q} 
\lesssim \|W\|_{L^6}^3 \|\hat{\eta}\|_{L^6} \|\hat{\eta}\|_{L^r} 
\lesssim o(1) \|\hat{\eta}\|_{L^r} 
$
\item
$\ds
\| R_0(-\hat{\omega}) \hat{\eta}^5 \|_{L^r}
\lesssim \| \hat{\eta}^5 \|_{L^q}
\lesssim \| \hat{\eta}  \|_{L^6}^4  \| \hat{\eta} \|_{L^r}
\lesssim o(1) \| \hat{\eta} \|_{L^r} 
$
\end{itemize}
where in the second inequality we used $\hat{\omega} \gg \hat{\eps}^{4/(5-p)}$.
Combining, we have achieved 
\begin{align*}
\|\hat{\eta}\|_{L^r} \lesssim 
\hat{\eps} + \sqrt{\hat{\omega}}^{1-\frac{3}{r}} + o(1) \|\hat{\eta}\|_{L^r}
\end{align*}
and so obtain the desired estimate for $3 < r < \infty$.
It remains to deal with $r=\infty$.
The first three estimates proceed similarly, while the last one uses the 
now-established $L^r$ estimate:
\begin{itemize} 
\item 
$ \ds
\|R_0(-\hat{\omega}) \hat{\eps} W^{p-1} \hat{\eta}\|_{L^\infty}  
\lesssim \hat{\eps} \| W^{p-1} \hat{\eta} \|_{L^{\frac{3}{2}-}\cap L^{\frac{3}{2}+}} \\
\lesssim \hat{\eps} \|W\|_{L^{\frac{3}{2}(p-1)-} \cap L^{\frac{3}{2}(p-1)+}}^{p-1} \|\hat{\eta}\|_{L^\infty} 
\lesssim \hat{\eps} \|\hat{\eta}\|_{L^\infty}
$
\item 
$ \ds
\|R_0(-\hat{\omega}) \hat{\eps} \hat{\eta}^p \|_{L^\infty}
\lesssim \hat{\eps} \hat{\omega}^{-\frac{5-p}{4}} \| \hat{\eta}^p \|_{L^\frac{6}{p-1}} 
\lesssim o(1)  \|\hat{\eta}\|_{L^6}^{p-1} \|\hat{\eta}\|_{L^\infty} 
\lesssim o(1) \|\hat{\eta}\|_{L^\infty} 
$
\item
$\ds 
\| R_0(-\hat{\omega}) W^3 \hat{\eta}^5\|_{L^\infty} 
\lesssim \|W^3 \hat{\eta}^2\|_{L^{\frac{3}{2}-} \cap L^{\frac{3}{2}+}} 
\lesssim \|W\|_{L^{6-} \cap L^{6+}}^3 \|\hat{\eta}\|_{L^6} \|\hat{\eta}\|_{L^\infty} \\
\lesssim o(1) \|\hat{\eta}\|_{L^\infty} 
$
\item
$\ds
\|R_0(-\hat{\omega}) \hat{\eta}^5\|_{L^\infty}
\lesssim \| \hat{\eta}^5 \|_{L^{\frac{3}{2}-} \cap L^{\frac{3}{2}+}}
\lesssim \|  \hat{\eta} \|_{L^{6-} \cap L^{6+}}^4 \| \hat{\eta} \|_{L^\infty} \\
\lesssim (\hat{\eps} + \hat{\omega}^{\frac{1}{4}-})^4 \| \hat{\eta} \|_{L^\infty}
\lesssim o(1) \| \hat{\eta} \|_{L^\infty}
$
\end{itemize}
which, combined, establish the desired estimate with $r = \infty$.
Strictly speaking, these are {\it a priori} estimates, since we do not
know $\hat{\eta} \in L^r$ for $r > 6$ to begin with. 
However, the typical argument of performing the estimates on a series of smooth functions that approximate $\eta$ remedies this after passing to the limit. 
\end{proof}
  
\begin{lemma}
Write $\hat{\omega} = \hat{\lambda}^2$.
For $\hat{\eps}$ sufficiently small, $\| \hat{\eta} \|_{L^\infty} \aleq \hat{\eps}$, and
$\hat{\lambda} = \lambda(\hat{\eps},\hat{\eta})$ as given in Lemma~\ref{lambdalemma}.
Moreover, $W_{\hat{\eps}} = W + \hat{\eta} = Q_{\hat{\eps}}$.
\end{lemma}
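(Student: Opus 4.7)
The plan is to use the orthogonality condition \eqref{orthhat} to bootstrap from the a priori bound in Lemma \ref{etabounds} up to $\|\hat{\eta}\|_{L^\infty} \lesssim \hat{\eps}$, after which the uniqueness assertions in Lemmas \ref{lambdalemma} and \ref{etalemma} identify $W_{\hat{\eps}}$ with $Q_{\hat{\eps}}$. The starting point is that $W_{\hat{\eps}} = W + \hat{\eta}$ solves \eqref{pureelliptic} with parameters $\hat{\eps}$ and $\hat{\omega} = \hat{\lambda}^2$, so $\hat{\eta}$ automatically satisfies the fixed-point formulation \eqref{eta}, while the orthogonality \eqref{orthhat} supplied by Lemma \ref{minconv} is precisely the equation \eqref{lambda} determining $\hat{\lambda}$ from $(\hat{\eps},\hat{\eta})$. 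Lemma \ref{etabounds} gives the crude bound $\|\hat{\eta}\|_{L^\infty} \lesssim \hat{\eps} + \hat{\lambda}$.

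The first step is to insert this crude bound into \eqref{lambda} written for $(\hat{\eps},\hat{\lambda},\hat{\eta})$, and use the inner-product expansions \eqref{inner1}, \eqref{inner2} together with the nonlinear estimates from \eqref{inner3}. With only $\|\hat{\eta}\|_{L^\infty} \lesssim \hat{\eps} + \hat{\lambda}$ available, each piece of $\langle R_0(-\hat{\lambda}^2) V\psi, N(\hat{\eta}) \rangle$ -- namely $W^3 \hat{\eta}^2$, $\hat{\eta}^5$, $\hat{\eps} W^{p-1}\hat{\eta}$, $\hat{\eps}\hat{\eta}^p$ -- is seen, using the same $L^q$ duality computations as in Lemma \ref{lambdalemma}, to contribute $o(\hat{\lambda}) + o(\hat{\eps})$ as $\hat{\eps}, \hat{\lambda} \to 0$; the borderline term $\hat{\eps}\hat{\lambda}^{-2}(\hat{\eps}+\hat{\lambda})^p$ is controlled because $p > 3$ and the scaling constraint $\hat{\eps}^{4/(5-p)} \ll \hat{\omega}$ carried through from the construction in Lemmas \ref{minconv}--\ref{H1conv} keeps $\hat{\eps}\hat{\lambda}^{p-3} \to 0$. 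Equation \eqref{lambda} then reduces to
\[
2\sqrt{3\pi}\,\hat{\lambda}\,(1 + o(1)) = -\hat{\eps}\,\langle \psi, f(W)\rangle\,(1+o(1)),
\]
which forces $\hat{\lambda} = \lambda^{(1)} \hat{\eps}(1+o(1))$. In particular, $\sqrt{\hat{\omega}} = \hat{\lambda} \lesssim \hat{\eps}$, and feeding this back into Lemma \ref{etabounds} with $r = \infty$ yields $\|\hat{\eta}\|_{L^\infty} \lesssim \hat{\eps}$.

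With this bound in hand, the hypotheses of Lemma \ref{lambdalemma} apply to the function $\hat{\eta}$, and $\hat{\lambda}$ lies in the interval $[\hat{\eps}\lambda^{(1)}/2, 3\hat{\eps}\lambda^{(1)}/2]$; the uniqueness assertion of that lemma then forces $\hat{\lambda} = \lambda(\hat{\eps},\hat{\eta})$. Finally, $\hat{\eta}$ solves \eqref{eta} with this matched $\hat{\lambda}$ and $\|\hat{\eta}\|_{L^\infty} \leq R\hat{\eps}$, so the uniqueness part of Lemma \ref{etalemma} identifies $\hat{\eta}$ with the fixed point constructed there, and hence $W_{\hat{\eps}} = W + \hat{\eta} = Q_{\hat{\eps}}$. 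The main obstacle is the bootstrap step: one must verify that the nonlinear contributions to \eqref{lambda} are genuinely subleading under only the weak bound $\|\hat{\eta}\|_{L^\infty} \lesssim \hat{\eps} + \hat{\lambda}$, and this is where the interplay between $p>3$ and the scaling constraint inherited from the variational construction is essential.
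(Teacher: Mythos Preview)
Your overall strategy is exactly the paper's: bootstrap the orthogonality relation \eqref{orthhat} using Lemma~\ref{etabounds} to pin down $\hat{\lambda} \sim \lambda^{(1)} \hat{\eps}$, then invoke the uniqueness statements in Lemmas~\ref{lambdalemma} and~\ref{etalemma}. The issue is in how you estimate the nonlinear inner-product $\langle R_0(-\hat{\lambda}^2) V\psi, N(\hat{\eta})\rangle$.

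By appealing to ``the same $L^q$ duality computations as in Lemma~\ref{lambdalemma}'' you place $R_0(-\hat{\lambda}^2) V\psi$ in $L^1$, which costs a factor $\hat{\lambda}^{-2}$. For the term $\hat{\eta}^5$ this gives $\hat{\lambda}^{-2}(\hat{\eps}+\hat{\lambda})^5$, and for $\hat{\eps}\hat{\eta}^p$ it gives the $\hat{\eps}\hat{\lambda}^{-2}(\hat{\eps}+\hat{\lambda})^p$ you flagged. You handle the latter by writing it as $\hat{\eps}\hat{\lambda}^{p-3}$, but that only covers the regime $\hat{\lambda} \gtrsim \hat{\eps}$. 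In the regime $\hat{\lambda} \ll \hat{\eps}$ (which is not excluded a priori) these terms become $\hat{\eps}^5/\hat{\lambda}^2$ and $\hat{\eps}^{p+1}/\hat{\lambda}^2$. The scaling constraint $\hat{\eps}^{4/(5-p)} \ll \hat{\lambda}^2$ is too weak to make either of these $o(\hat{\eps})$ once $p > 4$: one would need $\hat{\eps}^4 \ll \hat{\lambda}^2$ (resp.\ $\hat{\eps}^p \ll \hat{\lambda}^2$), while the constraint only provides $\hat{\eps}^{4/(5-p)} \ll \hat{\lambda}^2$ with $4/(5-p) > 4$. So the bootstrap does not close for the full range $3 < p < 5$.

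The paper avoids this by changing the duality pairing: it places $R_0(-\hat{\lambda}^2) V\psi$ in $L^6$ (uniformly bounded in $\hat{\lambda}$ via \eqref{kernelbdd}), so that the nonlinear pieces are controlled by $\|\hat{\eta}\|_{L^6}^5$, $\hat{\eps}\|\hat{\eta}\|_{L^{6p/5}}^p$, $\hat{\eps}\|\hat{\eta}\|_{L^6}$, with no singular prefactor. Combined with Lemma~\ref{etabounds} in the form $\|\hat{\eta}\|_{L^r} \lesssim \hat{\eps} + \hat{\lambda}^{1-3/r}$ for finite $r$, this yields
\[
(\hat{\lambda} - \lambda^{(1)}\hat{\eps})(1 + O(\hat{\lambda}^{1-})) = O(\hat{\lambda}^2 + \hat{\eps}^2 + \hat{\eps}\hat{\lambda}^{1/2}),
\]
from which $\hat{\lambda} - \lambda^{(1)}\hat{\eps} \ll \hat{\eps}$ follows directly, without ever invoking the scaling constraint. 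The rest of your argument (feeding back into Lemma~\ref{etabounds} and applying uniqueness) then goes through unchanged.
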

\begin{proof}
From the orthogonality equation~\eqref{orthhat},
\[
\begin{split}
  0 &= \langle R_0(-\hat{\lambda}^2) V \psi,
  -\hat{\lambda}^2 W + \hat{\eps} W^p + N(\hat{\eta}) \rangle \\
  &= -\hat{\lambda} \cdot \hat{\lambda} \langle R_0(-\hat{\lambda}^2) V \psi, W \rangle
  + \hat{\eps} \langle R_0(-\hat{\lambda}^2) V \psi, W^p \rangle
  + \langle R_0(-\hat{\lambda}^2) V \psi, N(\hat{\eta}) \rangle.
\end{split}
\]
Now re-using estimates~\eqref{inner1} and~\eqref{inner2}, as well as
\begin{itemize}
\item
$ \ds
\begin{aligned}[t]
|\langle R_0(-\hat{\lambda}^2) V \psi, W^3 \hat{\eta}^2 \rangle | 
&\aleq \|R_0(-\hat{\lambda}^2) V \psi \|_{L^6} \|W^3 \hat{\eta}^2 \|_{L^{6/5}} \\
&\aleq \|V \psi\|_{L^{6/5}} \|\hat{\eta}\|_{L^\infty}^2 \|W^3\|_{L^{6/5}} 
\aleq \| \hat{\eta} \|_{L^\infty}^2
\end{aligned}
$
\item
$ \ds
\begin{aligned}[t]
|\langle R_0(-\hat{\lambda}^2) V \psi, \hat{\eta}^5 \rangle | 
&\aleq \|R_0(-\hat{\lambda}^2) V \psi \|_{L^6} \|\hat{\eta}^5\|_{L^\frac{6}{5}} \\
&\aleq \|V \psi\|_{L^\frac{6}{5}} \|\hat{\eta}\|_{L^6}^5
\aleq  \|\hat{\eta}\|_{L^6}^5
\end{aligned}
$
\item
$ \ds 
\begin{aligned}[t]
|\langle R_0(-\hat{\lambda}^2) V \psi, \hat{\eps} \hat{\eta}^{p} \rangle | 
&\aleq \hat{\eps} \|R_0(-\hat{\lambda}^2) V\psi\|_{L^6} \|\hat{\eta}^p\|_{L^\frac{6}{5}} \\
& \aleq \hat{\eps} \|V \psi\|_{L^\frac{6}{5}} \|\hat{\eta} \|_{L^{\frac{6}{5}p}}^p
\aleq \hat{\eps} \cdot \|\hat{\eta} \|_{L^{\frac{6}{5}p}}^p
\end{aligned}
$
\item
$ \ds 
\begin{aligned}[t]
|\langle R_0(-\hat{\lambda}^2) V \psi, \hat{\eps} W^{p-1} \hat{\eta} \rangle | 
&\aleq \hat{\eps} \|R_0(-\hat{\lambda}^2) V\psi\|_{L^6} \|W^{p-1} \hat{\eta}\|_{L^\frac{6}{5}} \\
& \aleq \hat{\eps} \|V \psi\|_{L^\frac{6}{5}} \|W^{p-1} \|_{L^{\frac{3}{2}}} \| \hat{\eta} \|_{L^6}
\aleq \hat{\eps} \| \hat{\eta} \|_{L^6},
\end{aligned}
$
\end{itemize}
combined with Lemma~\ref{etabounds}, yields
\[
  (\hat{\lambda} - \lambda^{(1)} \hat{\eps})(1 + O(\hat{\lambda}^{1-}))
  = O( \hat{\lambda}^2 + \hat{\eps}^2 + \hat{\eps} \hat{\lambda}^{\frac{1}{2}})
\]
from which follows
\[
  \hat{\lambda} - \lambda^{(1)} \hat{\eps} \ll \hat{\eps},
\]
and then by Lemma~\ref{etabounds} again,
\[
  \| \hat{\eta} \|_{L^r} \aleq \hat{\eps}^{1-\frac{3}{r}}, \quad 3 < r \leq \infty.
\]
It now follows from Lemma~\ref{lambdalemma}
that $\hat{\lambda} = \lambda(\hat{\eps}, \hat{\eta})$ for 
$\hat{\eps}$ small enough.

Finally, the uniqueness of the fixed-point in the $L^\infty$-ball of radius $R \hat{\eps}$
from Lemma~\ref{etalemma} implies that
$W_{\hat{\eps}} = Q_{\hat{\eps}}$, where $Q_{\hat{\eps}}$
is the solution constructed in Theorem~\ref{theorem1}.
\end{proof}

We have so far established that, up to subsequence, and rescaling, 
a sequence of minimizers $V_{\eps_j}$ eventually coincides with a
solution $Q_\eps$ as constructed in Theorem~\ref{theorem1}:
$\xi_j^{1/2} V_{\eps_j}(\xi_j \cdot)= Q_{\hat{\eps}_j}$
(here $\xi_j = \nu_j \mu_j$). 
It remains to remove the scaling $\xi_j$ and establish that $\hat{\eps}_j = \eps_j$:
\begin{lemma}\label{trivscale}
Suppose $V(x) = \xi^{-\frac{1}{2}} Q_{\hat{\eps}}(x/\xi)$ solves~\eqref{pureelliptic}
with $\omega=\omega(\eps)$ (as given in Theorem \ref{theorem1}), where 
$\hat\eps = \xi^{(5-p)/2}\eps$, $\hat\omega = \xi^{2} \omega$, and 
$\hat\omega = \omega(\hat\eps)$. Then $\xi=1$ and $\hat\eps=\eps$, and so 
$V =Q_\eps$. 
\end{lemma}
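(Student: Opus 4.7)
The plan is to reduce the hypotheses to a single scalar equation in $\xi$ for the frequency function $\omega(\cdot)$ constructed in Theorem~\ref{theorem1}, and then show that a suitable combination of $\omega$ and its argument is strictly monotonic so that the equation forces $\xi = 1$.

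First, I would combine the two relations $\hat\omega = \xi^2 \omega(\eps)$ (from the scaling) and $\hat\omega = \omega(\hat\eps)$ (from Theorem~\ref{theorem1}, since $Q_{\hat\eps}$ is the constructed solution at parameter $\hat\eps$) to eliminate $\hat\omega$. Together with $\hat\eps = \xi^{(5-p)/2} \eps$, this yields the single equation
\[
  \omega\!\left(\xi^{(5-p)/2} \eps\right) = \xi^2 \, \omega(\eps).
\]
Dividing both sides by $\bigl(\xi^{(5-p)/2} \eps\bigr)^{4/(5-p)} = \xi^2 \eps^{4/(5-p)}$, this is equivalent to $g(\hat\eps) = g(\eps)$, where
\[
  g(s) := \omega(s) \, s^{-4/(5-p)}.
\]
So it suffices to prove that $g$ is injective (equivalently, strictly monotonic) on $(0, \eps_0]$; then $\hat\eps = \eps$, whence $\xi = (\hat\eps/\eps)^{2/(5-p)} = 1$.

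For the monotonicity, I would use the expansion $\omega(s) = \omega_1 s^2 + \tilde\omega(s)$ from Theorem~\ref{theorem1}, with $\omega_1 = (\lambda^{(1)})^2 > 0$ and $\tilde\omega(s) = O(s^{2+\delta_1})$. This gives
\[
  g(s) = \omega_1 \, s^{\alpha}\bigl(1 + O(s^{\delta_1})\bigr), \qquad \alpha := 2 - \tfrac{4}{5-p} = \tfrac{6-2p}{5-p}.
\]
Since $3 < p < 5$, we have $5-p \in (0,2)$ and $\alpha < 0$. The leading term $\omega_1 s^\alpha$ is therefore strictly decreasing on $(0,\infty)$, and a short computation shows that for $s_1 < s_2$ in $(0,\eps_0]$,
\[
  g(s_2) - g(s_1) = \omega_1 (s_2^\alpha - s_1^\alpha) + O\!\bigl(s_1^{\alpha+\delta_1}\bigr) + O\!\bigl(s_2^{\alpha+\delta_1}\bigr),
\]
so the error is subdominant to the (negative) leading difference provided $\eps_0$ is taken small enough. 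Hence $g$ is strictly decreasing on $(0, \eps_0]$, which completes the argument.

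The main (and really only) potential obstacle is making the monotonicity of $g$ uniform on the full interval $(0, \eps_0]$ rather than merely asymptotic as $s \to 0$. This is handled by shrinking $\eps_0$ if necessary so that the quantitative error bound $\tilde\omega(s) = O(s^{2+\delta_1})$ from Theorem~\ref{theorem1} dominates only by a small factor compared with $\omega_1 s^2$; alternatively, one can appeal to the Lipschitz expansion of Lemma~\ref{monotone} to control $\omega(s_2) - \omega(s_1)$ and propagate monotonicity. Once $\xi = 1$ is established, the identity $V = Q_\eps$ is immediate from $V(x) = \xi^{-1/2} Q_{\hat\eps}(x/\xi) = Q_\eps(x)$.
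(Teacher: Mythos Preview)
Your reduction to the scalar equation $g(\hat\eps)=g(\eps)$ with $g(s)=\omega(s)\,s^{-4/(5-p)}$ is exactly the paper's approach: the paper's function $\Omega_\eps(\hat\eps)=(\eps/\hat\eps)^{4/(5-p)}\omega(\hat\eps)$ is just $\eps^{4/(5-p)}g(\hat\eps)$, so monotonicity of one is monotonicity of the other.

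The gap is in your primary monotonicity argument. The pointwise bound $\tilde\omega(s)=O(s^{2+\delta_1})$ does \emph{not} yield strict monotonicity of $g$. In your displayed line
\[
  g(s_2)-g(s_1)=\omega_1(s_2^\alpha-s_1^\alpha)+O(s_1^{\alpha+\delta_1})+O(s_2^{\alpha+\delta_1}),
\]
the error terms are of fixed order in $s_1,s_2$ and do not shrink as $|s_2-s_1|\to 0$, whereas the leading difference $s_2^\alpha-s_1^\alpha$ does. So for $s_1,s_2$ close together (both of size $\sim\eps_0$, say) the errors can overwhelm the main term, and shrinking $\eps_0$ does not help: the ratio $s_j^{\alpha+\delta_1}/|s_2^\alpha-s_1^\alpha|$ is unbounded regardless. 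A pointwise size bound on the remainder simply carries no information about its variation.

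Your fallback---use Lemma~\ref{monotone}---is precisely what is required and is what the paper does. That lemma gives the Lipschitz-type control $(\lambda_2-\lambda_1)=\lambda^{(1)}(\eps_2-\eps_1)(1+o(1))$, which, inserted into
\[
  \eps_2^{-\alpha}\lambda_2^2-\eps_1^{-\alpha}\lambda_1^2
  =\eps_2^{-\alpha}(\lambda_2-\lambda_1)(\lambda_2+\lambda_1)
   +\lambda_1^2\bigl(\eps_2^{-\alpha}-\eps_1^{-\alpha}\bigr),
\]
produces a leading term $(\lambda^{(1)})^2\eps_1^{1-\alpha}(\eps_2-\eps_1)(2-\alpha)$ with $2-\alpha<0$, now genuinely dominant because both sides scale linearly in $\eps_2-\eps_1$. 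So promote the ``alternative'' to the actual argument and drop the pointwise attempt.
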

\begin{proof}
By assumption $\hat{\omega} = \xi^2 \omega(\eps) = \omega(\hat{\eps})$, so
\begin{align*}
\omega(\eps) = \Omega_\eps(\hat{\eps}), \qquad 
\Omega_\eps(\hat\eps) := \left( \frac{\eps}{\hat\eps}\right)^{4/(5-p)} \omega(\hat\eps). 
\end{align*}
This relation is satisfied if $\hat{\eps} = \eps$ ($\xi = 1$), and our goal is to show
it is not satisfied for any other value of $\hat{\eps}$. 
Thus we will be done if we can show that $\Omega_\eps$ is monotone in $\hat\eps$.
Take $\eps_1$ and $\eps_2$ with $0<\eps_1<\eps_2\leq \eps_0$. 
Let $\alpha = 4/(5-p)>2$ and assume that $0<\eps_2-\eps_1 \ll \eps_1$. 
Denoting $\omega(\eps_j) = \lambda_j^2$, we estimate: 
\begin{align*}
\eps^{-\alpha} \left( \Omega_\eps(\eps_2) - \Omega_\eps(\eps_1) \right)
&= \eps_2^{-\alpha}\lambda^2_2 - \eps_1^{-\alpha} \lambda^2_1  \\
&= \eps_2^{-\alpha}(\lambda_2-\lambda_1)(\lambda_2+\lambda_1)
 + \lambda^2_1 \left( \eps_2^{-\alpha} - \eps_1^{-\alpha}\right) \\
& \approx \eps_1^{-\alpha} \lambda^{(1)}(\eps_2-\eps_1) \cdot 2\lambda^{(1)}\eps_1  \\
& \quad + \eps_1^2 (\lambda^{(1)})^2  \eps_1^{-\alpha}\left(-\frac{\alpha}{\eps_1}(\eps_2-\eps_1)
+O\left( \left( \frac{\eps_2-\eps_1}{\eps_1} \right)^2 \right)  \right)    \\
&\approx \eps_1^{1-\alpha} (\lambda^{(1)})^2 (\eps_2-\eps_1) \left( 2 -\alpha  \right) \\
&<0 
\end{align*}
where we have used Lemma \ref{monotone}. 
With the monotonicity argument complete we conclude that $\eps=\hat\eps$ and $\xi=1$ so there follows $V  = Q_\eps$. 
\end{proof}

The remaining lemma completes the proof of Theorem~\ref{theorem2}:
\begin{lemma} \label{unique}
There is $\eps_0>0$ such that for $0<\eps \leq \eps_0$ and $\omega=\omega(\eps)$,
the solution $Q_\eps$ of~\eqref{pureelliptic} constructed in Theorem \ref{theorem1}
is the unique positive, radially symmetric solution of the  minimization problem \eqref{mini}.
\end{lemma}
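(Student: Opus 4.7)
The plan is to argue by contradiction, assembling the preceding chain of lemmas. Suppose uniqueness fails: then for every $\eps_0>0$ there exist $\eps \in (0,\eps_0]$ and a positive, radially-symmetric minimizer $V_\eps$ of \eqref{mini} with $\omega=\omega(\eps)$ satisfying $V_\eps \ne Q_\eps$. Extract a sequence $\eps_j \downarrow 0$ with corresponding counterexamples $V_j := V_{\eps_j}$.

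First I would feed this sequence $V_j$ through the machinery already established. Lemma \ref{eminimizer} applies to any minimizer (not just a distinguished one), so the $V_j$ are smooth, positive, radial solutions of \eqref{pureelliptic}. Lemma \ref{minseq} shows $V_j$ is a minimizing sequence for the unperturbed problem \eqref{0min}, so Lemma \ref{H1conv} (after passing to a subsequence) provides scalings $\mu_j$ with $V_j^{\mu_j} \to W$ in $\dot H^1$, and Lemma \ref{minconv} provides further scalings $\nu_j = 1+o(1)$ so that $\hat V_j := (V_j^{\mu_j})^{\nu_j}$ satisfies the orthogonality condition \eqref{orthhat} with respect to the rescaled parameters $\hat\eps_j := \xi_j^{(5-p)/2}\eps_j$ and $\hat\omega_j := \xi_j^{2}\omega(\eps_j)$, where $\xi_j := \nu_j\mu_j$. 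The remarks after Lemma \ref{minconv} record that $\hat\eps_j \to 0$ and $\hat\omega_j = \omega(\hat\eps_j)$ (the latter by the scaling identity for solutions of \eqref{pureelliptic}).

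Next, the conclusion of the block of lemmas culminating in the identification $W_{\hat{\eps}} = Q_{\hat{\eps}}$ applies: writing $\hat V_j = W + \hat\eta_j$ and using Lemma \ref{etabounds} to bootstrap the $L^r$ bounds, the orthogonality condition and the smallness of $\hat\eps_j$ force $\hat\lambda_j = \lambda(\hat\eps_j,\hat\eta_j)$ in the sense of Lemma \ref{lambdalemma}, and then the uniqueness of the fixed point in Lemma \ref{etalemma} gives $\hat V_j = Q_{\hat\eps_j}$ for all $j$ large. Undoing the scaling, $V_j(x) = \xi_j^{-1/2} Q_{\hat\eps_j}(x/\xi_j)$, with $\hat\omega_j = \omega(\hat\eps_j) = \xi_j^2 \omega(\eps_j)$ and $\hat\eps_j$ small. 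This is precisely the hypothesis of Lemma \ref{trivscale}, which forces $\xi_j = 1$ and $\hat\eps_j = \eps_j$, hence $V_j = Q_{\eps_j}$, contradicting the choice of $V_j$.

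The main obstacle is really a quantifier issue rather than a new estimate: all the hard analytic work has been done in Lemmas \ref{H1conv}--\ref{trivscale}, but those results are phrased along sequences $\eps_j \downarrow 0$, whereas Lemma \ref{unique} asserts uniqueness for every sufficiently small $\eps$. The contradiction argument above bridges this gap. A secondary point to be careful about is that the subsequence extraction in Lemma \ref{H1conv} and the small-$\hat\eps$ threshold required by the identification $\hat V_j = Q_{\hat\eps_j}$ and by Lemma \ref{trivscale} all hold for $j$ sufficiently large, which is exactly what the contradiction needs.
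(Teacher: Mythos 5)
Your proposal is correct and follows essentially the same route as the paper's own (very brief) proof: argue by contradiction along a sequence $\eps_j \downarrow 0$ of counterexample minimizers, push them through Lemmas \ref{minseq}, \ref{H1conv}, \ref{minconv}, \ref{etabounds}, the subsequent identification lemma and Lemma \ref{trivscale} in succession, and derive $V_{\eps_j} = Q_{\eps_j}$ for $j$ large, a contradiction. Your expansion of the quantifier issue is a useful gloss on what the paper leaves implicit, but it is the same argument; one small misattribution — $\hat\omega_j = \omega(\hat\eps_j)$ is not recorded in the remarks after Lemma \ref{minconv}, but is the content of the identification lemma following Lemma \ref{etabounds} — does not affect the validity since you invoke that lemma correctly in the next paragraph.
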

\begin{proof}
This is the culmination of the previous series of Lemmas. 
We know that minimizers $V=V_\eps$ exist by Lemma \ref{eminimizer}. 
Arguing by contradiction, if the statement is false, there is a sequence $V_{\eps_j}$,
$\eps_j \to 0$, of such minimizers, for which $V_{\eps_j} \not= Q_{\eps_j}$. 
We apply Lemmas \ref{minseq}, \ref{H1conv}, \ref{minconv}, \ref{etabounds} and \ref{trivscale} in succession to this sequence, to conclude that along a subsequence,
$V_{\eps_j}$ and $Q_{\eps_j}$ eventually agree, a contradiction.
\end{proof}

Finally, for a given $\eps$, we establish a range of $\omega$ for which a minimizer exists and is, up to scaling, a constructed solution. This addresses Remark \ref{range_remark}. 
\begin{corollary}\label{range}
Fix $\eps > 0$ and take $\omega \in [\underline{\omega}, \infty)$ where
\begin{align*}
\underline{\omega} = \eps^{4/(5-p)} \eps_0^{-4/(5-p)} \omega(\eps_0) \leq \omega(\eps). 
\end{align*}
The minimization problem \eqref{mini} with $\eps$ and $\omega$ has a solution $Q$ given by 
\begin{align*}
Q(x) = \mu^{1/2} Q_{\hat\eps}(\mu x) 
\end{align*}
where $Q_{\hat\eps}$ is a constructed solution with $0 < \hat\eps \leq \eps_0$ and corresponding $\omega(\hat\eps)$. The scaling factor, $\mu$, satisfies the relationships 
\begin{align*}
\eps = \hat\eps \mu^{(5-p)/2}, \quad \quad \omega = \omega(\hat\eps) \mu^2. 
\end{align*}
\end{corollary}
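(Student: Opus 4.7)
The plan is to reduce the existence of a minimizer for arbitrary $\omega \in [\underline\omega, \infty)$ to solving a single scalar equation for the auxiliary parameter $\hat\eps$, and then to transfer the minimization property of $Q_{\hat\eps}$ (already established in Theorem~\ref{theorem2}) through the scale invariance of the constrained problem.

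Eliminating $\mu$ from the two relations $\eps = \hat\eps \mu^{(5-p)/2}$ and $\omega = \omega(\hat\eps) \mu^2$ gives $\mu = (\eps/\hat\eps)^{2/(5-p)}$ and reduces the problem to finding $\hat\eps \in (0, \eps_0]$ with
\begin{align*}
\omega = \Omega_\eps(\hat\eps) := \left( \frac{\eps}{\hat\eps} \right)^{\frac{4}{5-p}} \omega(\hat\eps).
\end{align*}
I would show $\Omega_\eps : (0, \eps_0] \to [\underline\omega, \infty)$ is a (strictly decreasing) continuous bijection. By construction $\Omega_\eps(\eps_0) = \underline\omega$. As $\hat\eps \to 0^+$, Theorem~\ref{theorem1} gives $\omega(\hat\eps) = (\lambda^{(1)})^2 \hat\eps^2 (1 + O(\hat\eps^{\delta_1}))$, and since $3<p<5$ implies $\alpha := 4/(5-p) > 2$, we get $\Omega_\eps(\hat\eps) \sim (\lambda^{(1)})^2 \eps^{\alpha} \hat\eps^{2-\alpha} \to \infty$. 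Strict monotonicity follows from exactly the computation performed at the end of the proof of Lemma~\ref{trivscale}: with $\lambda_j^2 = \omega(\eps_j)$ and Lemma~\ref{monotone} giving $\lambda_2 - \lambda_1 = \lambda^{(1)}(\eps_2 - \eps_1)(1+o(1))$, the sign of $\Omega_\eps(\eps_2) - \Omega_\eps(\eps_1)$ is controlled by $2 - \alpha < 0$. This argument is local in $\hat\eps$, but chaining the estimate over a finite covering of any compact subinterval of $(0, \eps_0]$ (using continuity of $\omega(\cdot)$) upgrades it to strict monotonicity on the entire interval, and hence to the desired bijection.

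Given $\hat\eps$ and $\mu$ determined as above, set $Q(x) := \mu^{1/2} Q_{\hat\eps}(\mu x)$. By Remark~\ref{scales}, $Q$ solves~\eqref{pureelliptic} with parameters $\eps, \omega$. The scaling map $u \mapsto u^\mu := \mu^{1/2} u(\mu \cdot)$ preserves $\|\nabla \cdot \|_{L^2}$ and $\|\cdot\|_{L^6}$, multiplies $\|\cdot\|_{L^{p+1}}^{p+1}$ by $\mu^{(p-5)/2}$, and multiplies $\|\cdot\|_{L^2}^2$ by $\mu^{-2}$. Together with $\hat\eps = \eps \mu^{(p-5)/2}$ and $\omega(\hat\eps) = \omega \mu^{-2}$, a direct substitution gives
\begin{align*}
\mathcal{S}_{\eps,\omega}(u^\mu) = \mathcal{S}_{\hat\eps, \omega(\hat\eps)}(u), \qquad \mathcal{K}_{\eps}(u^\mu) = \mathcal{K}_{\hat\eps}(u),
\end{align*}
so $u \mapsto u^\mu$ is a bijection between the admissible sets $\{\mathcal{K}_{\hat\eps} = 0\}$ and $\{\mathcal{K}_{\eps} = 0\}$ that preserves the value of the action. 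Since Theorem~\ref{theorem2} identifies $Q_{\hat\eps}$ as a minimizer of $\mathcal{S}_{\hat\eps, \omega(\hat\eps)}$ on $\{\mathcal{K}_{\hat\eps} = 0\}$, $Q = Q_{\hat\eps}^{\mu}$ is a minimizer of $\mathcal{S}_{\eps,\omega}$ on $\{\mathcal{K}_{\eps} = 0\}$. The only nontrivial step is the monotonicity/bijectivity of $\Omega_\eps$; everything else amounts to bookkeeping the exponents under the $\dot H^1$-scaling.
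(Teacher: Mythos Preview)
Your proposal is correct and follows essentially the same approach as the paper: both define the map $\Omega_\eps(\hat\eps) = (\eps/\hat\eps)^{4/(5-p)}\omega(\hat\eps)$, use the monotonicity computation from Lemma~\ref{trivscale} together with the endpoint behaviour ($\Omega_\eps(\eps_0)=\underline\omega$, $\Omega_\eps(\hat\eps)\to\infty$ as $\hat\eps\to 0$) to hit every $\omega\in[\underline\omega,\infty)$, and then transfer the minimizing property of $Q_{\hat\eps}$ through the scaling identities $\mathcal S_{\eps,\omega}(u^\mu)=\mathcal S_{\hat\eps,\omega(\hat\eps)}(u)$, $\mathcal K_\eps(u^\mu)=\mathcal K_{\hat\eps}(u)$. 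The only cosmetic differences are that you make the local-to-global monotonicity step explicit (the paper simply cites Lemma~\ref{trivscale}), and you phrase the minimizer transfer as a bijection of admissible sets while the paper argues by contradiction; these are equivalent.
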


\begin{proof}
Fix $\eps > 0$. Take any $0 < \hat\eps \leq \eps_0$ and corresponding constructed $\omega(\hat\eps)$ and constructed solution $Q_{\hat\eps}$. Then, for scaling $\mu = (\eps/\hat\eps)^{2/(5-p)}$ the function   
\begin{align*}
Q(x) = \mu^{1/2} Q_{\hat\eps}(\mu x)
\end{align*}
is a solution to the elliptic problem (\ref{pureelliptic}) with $\eps$ and $\omega = \omega(\hat\eps) \mu^2$. Recall from Lemma \ref{trivscale} that $\omega(\hat\eps) \mu^2$ is monotone in $\hat\eps$. Taking $\hat\eps \downarrow 0$ yields $\omega \to \infty$. Setting $\hat\eps = \eps_0$ yields $\omega = \underline{\omega}$. 

In other words if we fix $\eps$ and $\omega \in [\underline\omega, \infty)$ from the start we determine an $\hat\eps$ and $\mu$ that generate the desired $Q$. We claim that the function $Q(x)$ is a minimizer of the problem (\ref{mini}) with $\eps$ and $\omega$. Suppose not. That is, suppose there exists a function $0 \neq v \in H^1$ with $\mathcal{K}_\eps(v) = 0$ such that $\mathcal{S}_{\eps, \omega}(v) < \mathcal{S}_{\eps,\omega}(Q)$. Set $w(x) = \mu^{-1/2} v(\mu^{-1} x)$ and note that $0 = \mathcal{K}_\eps(v) = \mathcal{K}_{\hat\eps}(w)$. 
We now see
\begin{align*}
\mathcal{S}_{\hat\eps, \omega(\hat\eps)}(w) = \mathcal{S}_{\eps, \omega}(v) < \mathcal{S}_{\eps,\omega}(Q) = \mathcal{S}_{\hat\eps, \omega(\hat\eps)}(Q_{\hat\eps})
\end{align*}
which contradicts the fact that $Q_{\hat\eps}$ is a minimizer of the problem (\ref{mini}) with $\hat\eps$ and $\omega(\hat\eps)$. 
Therefore, $Q(x)$ is a minimizer of (\ref{mini}) with $\eps$ and $\omega$, which concludes the proof.  
\end{proof}


\section{
Dynamics Below the Ground States
} \label{dynamics}

In this final section we establish Theorem~\ref{dichotomy}, the scattering/blow-up 
dichotomy for the perturbed critical NLS~\eqref{NLS}. 

We begin by summarizing the local existence theory for~\eqref{NLS}.
This is based on the classical {\it Strichartz estimates} for the solutions 
of the homogeneous linear Schr\"odinger equation
\[
  i \p_t u = -\Delta u, \;\; u|_{t=0} = \phi \in L^2(\R^3) 
  \; \implies \; 
  u(x,t) = e^{i t \Delta} \phi \; \in C(\R,L^2(\R^3))
\]
and the inhomogeneous linear Schr\"odinger equation (with zero initial data)
\[
  i \p_t u = -\Delta u + f(x,t), \;\; u|_{t=0} = 0
  \; \implies \; 
  u(x,t) = -i \int_0^t e^{i (t-s) \Delta} f(\cdot,s) ds \; :
\]
\begin{equation} \label{Strichartz}
  \| e^{i t \Delta} \phi \|_{S(\R)} \leq C \| \phi \|_{L^2(\R^3)}, \quad
  \left\| \int_0^t e^{i(t-s)\Delta} f(\cdot,s) ds  \right\|_{S(I)} \leq C \| f \|_{N(I)},  
\end{equation}
where we have introduced certain Lebesgue norms for space-time functions
$f(x,t)$ on a time interval $t \in I \subset \R$:
\[
\begin{split}
  & \qquad \| f \|_{L^r_t L^q_x (I)} = \left \| \| f(\cdot,t) \|_{L^q(\R^3)} \right\|_{L^r(I)}, \\
  & \| f \|_{S(I)} := \| f \|_{L^\infty_t L^2_x (I) \cap L^2_t L^6_x (I)}, \quad
  \| f \|_{N(I)} := \| f \|_{L^1_t L^2_x (I) + L^2_t L^{\frac{6}{5}}_x (I)},
\end{split}
\]
together with the integral (Duhamel) reformulation of the Cauchy problem \eqref{NLS}:
\[
  u(x,t) = e^{i t \Delta} u_0 + i \int_0^t e^{i(t-s) \Delta} ( |u|^4 u + \e |u|^{p-1} u ) ds
\]
which in particular gives the sense in which we consider $u(x,t)$ to be a {\it solution} of~\eqref{NLS}.
This lemma summarizing the local theory is standard (see, for example~\cite{Cazenave,KOPV}):   
\begin{lemma} \label{local}
Let $3 \leq p < 5$, $\eps > 0$.
Given $u_0 \in H^1(\R^3)$, there is a unique solution
$u \in C( (-T_{min}, T_{max}) ; H^1(\R^3) )$ of~\eqref{NLS}
on a maximal time interval $I_{max} = (-T_{min}, T_{max}) \ni 0$. Moreover:
\begin{enumerate}
\item
space-time norms: 
$u, \nabla u \in S(I)$ for each compact time interval $I \subset I_{max}$; 
\item
blow-up criterion:
if $T_{max} < \infty$, then $\| u \|_{L^{10}_t L^{10}_x ([0,T_{max}))} = \infty$
(with similar statement for $T_{min}$); 
\item
scattering:
if $T_{max} = \infty$ and $\| u \|_{L^{10}_t L^{10}_x ([0,\infty))} < \infty$,
then $u$ scatters (forward in time) to $0$ in $H^1$:
\[
  \exists \; \phi_+ \in H^1(\R^3) \mbox{ s.t. }
  \| u(\cdot, t) - e^{i t \Delta} \phi_+ \|_{H^1} \to 0 \mbox{ as } t \to \infty
\]
(with similar statement for $T_{min}$); 
\item
small data scattering: for $\| u_0 \|_{H^1}$ sufficiently small,
$I_{max} = \R$,  \newline
$\| u \|_{L^{10}_t L^{10}_x (\R)} \aleq \| \nabla u_0 \|_{L^2}$,
and $u$ scatters (in both time directions).
\end{enumerate}
\end{lemma}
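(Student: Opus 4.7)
The plan is to carry out the by-now standard contraction-mapping argument based on the Strichartz estimates \eqref{Strichartz}, working in a Strichartz space at the $\dot H^1$-critical level that includes the $L^{10}_t L^{10}_x$ norm (for the blow-up/scattering criterion) together with a gradient-Strichartz norm such as $L^{10}_t L^{30/13}_x$, noting that $(10, 30/13)$ is an admissible pair in $3$D and Sobolev embedding gives $\|u\|_{L^{10}_{t,x}} \aleq \|\nabla u\|_{L^{10}_t L^{30/13}_x}$. Specifically, I would set
\[
  \|u\|_{X(I)} := \|u\|_{L^{10}_t L^{10}_x(I)} + \|\nabla u\|_{L^{10}_t L^{30/13}_x(I)} + \|u\|_{S(I)} + \|\nabla u\|_{S(I)}
\]
and show that the Duhamel map
\[
  \Phi(u)(t) := e^{it\Delta} u_0 - i \int_0^t e^{i(t-s)\Delta} \bigl( |u|^4 u + \eps |u|^{p-1} u \bigr)(s)\, ds
\]
is a contraction on a suitable ball in $X(I)$ for $|I|$ small (depending on $\|u_0\|_{H^1}$), or on $X(\R)$ when $\|\nabla u_0\|_{L^2}$ is sufficiently small, the latter giving item (4).

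The main nonlinear estimates are: for the critical piece, from the chain rule and H\"older in both time and space,
\[
  \|\nabla(|u|^4 u)\|_{L^2_t L^{6/5}_x(I)} \aleq \|u\|_{L^{10}_t L^{10}_x(I)}^4 \|\nabla u\|_{L^{10}_t L^{30/13}_x(I)},
\]
together with its difference-of-two-solutions analogue; and for the subcritical piece, H\"older in time with Sobolev embedding $H^1(\R^3) \hookrightarrow L^{a}(\R^3)$ for $a \in [2,6]$ gives
\[
  \|\nabla(\eps|u|^{p-1} u)\|_{N(I)} \aleq \eps |I|^{\theta} \, \|u\|_{L^\infty_t H^1(I)}^{p-1} \|\nabla u\|_{S(I)}
\]
for some $\theta = \theta(p) > 0$. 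Thus the $\eps$-perturbation is a benign lower-order (and $\eps$-small) contribution. A standard Picard iteration then produces a unique fixed point, yielding items (1) and (4); the maximal interval $I_{\max}$ is constructed by iterated continuation.

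For the blow-up criterion (2) and the scattering statement (3), the argument is by persistence of regularity: if $\|u\|_{L^{10}_t L^{10}_x([0, T_{\max}))} < \infty$, one partitions $[0, T_{\max})$ into finitely many subintervals on which this norm is small, and applies Strichartz on each to conclude $\|\nabla u\|_{S([0, T_{\max}))} < \infty$, whence $u$ can be continued past $T_{\max}$, contradicting maximality. Under the hypothesis of (3) the same partitioning yields $\|\nabla u\|_{S([0, \infty))} < \infty$, and then
\[
  \phi_+ := u_0 - i \int_0^\infty e^{-is\Delta} \bigl( |u|^4 u + \eps|u|^{p-1} u \bigr)(s)\, ds
\]
is well-defined in $H^1$ by Strichartz duality, with $\|u(\cdot,t) - e^{it\Delta}\phi_+\|_{H^1} \to 0$ as $t \to \infty$ following from the tail bound $\|\cdot\|_{N([t,\infty))} \to 0$. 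The only real subtlety is the bookkeeping between the two Strichartz pairs used (gradient-level and $L^{10}_{t,x}$); since $p < 5$ is strictly subcritical, the perturbation poses no new genuine difficulty, and the argument essentially reduces to that for the unperturbed energy-critical NLS.
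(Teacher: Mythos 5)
The paper does not prove Lemma~\ref{local}: it explicitly calls it standard and cites~\cite{Cazenave,KOPV}, so there is no ``paper's own proof'' to compare against, and your task is really to produce a correct account of the cited folklore. Your overall framework---Strichartz spaces at the $\dot H^1$-critical level, contraction in a norm combining $L^{10}_{t,x}$, $L^{10}_t L^{30/13}_x$ and $S(I)$, persistence of regularity from the $L^{10}_{t,x}$ norm---is the right one, and your estimate for the quintic piece is correct.

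The genuine gap is in your treatment of items (2)--(3), specifically the scattering statement. You propose to control the subcritical term by
$\|\nabla(\eps|u|^{p-1}u)\|_{N(I)} \aleq \eps |I|^\theta \|u\|_{L^\infty_t H^1(I)}^{p-1}\|\nabla u\|_{S(I)}$
and then ``partition $[0,\infty)$ into finitely many subintervals on which $\|u\|_{L^{10}_{t,x}}$ is small.'' But any such partition must include an unbounded tail $[T,\infty)$, and there the factor $|I|^\theta$ is $\infty$, so this estimate gives nothing; if instead you chop into pieces of bounded length, you need infinitely many pieces and the iteration never terminates. The standard fix (used, e.g., in~\cite{KOPV} for the cubic-quintic equation) is to estimate the subcritical term in a way that borrows decay from the $L^{10}_{t,x}$ norm itself rather than from the interval length: a H\"older split of the form
\[
\|\, |u|^{p-1}\nabla u\,\|_{N(I)} \aleq
\|u\|_{L^{10}_t L^{10}_x(I)}^{\alpha}\,\|u\|_{L^\infty_t L^b_x(I)}^{\,p-1-\alpha}\,\|\nabla u\|_{S(I)}
\]
for a suitable $\alpha\in(0,p-1)$ and $b\in[2,6]$ (one checks that for $3\le p<5$ the exponent bookkeeping is solvable with $(r',q')$ dual-admissible and an admissible pair for $\nabla u$, e.g.\ for $p=3$ one can take $\alpha=1$, $b=2$). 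The $\|u\|_{L^\infty_t L^b_x}$ factor is then controlled inside the bootstrap by mass conservation together with the $L^\infty_t L^2_x$ component of $\|\nabla u\|_{S}$, and the smallness on each piece of the partition now comes entirely from $\|u\|_{L^{10}_{t,x}(I_j)}$, so the argument closes on unbounded intervals. Your $|I|^\theta$ estimate is fine for the local well-posedness and continuation steps (items (1),(2),(4)), but (3) requires this alternative form. A second, smaller point: your specific $|I|^\theta$ bound as written uses $|u|^{p-1}\in L^\infty_t L^{6/(p-1)}_x$ via $H^1\hookrightarrow L^{6}$, which requires $6/(p-1)\ge 2$, i.e.\ $p\le 4$; for $4<p<5$ one must distribute the factors of $u$ more carefully (again this is routine, but should be stated).
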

\begin{remark}
The appearance here of the $L^{10}_t L^{10}_x$ space-time norm is natural
in light of the Strichartz estimates~\eqref{Strichartz}. Indeed, interpolation
between $L^\infty_t L^2_x$ and $L^2_t L^6$ shows that 
\[
  \| e^{i t \Delta} \phi \|_{L^r_t L^q_x (\R)} \aleq \| \phi \|_{L^2},
  \quad \frac{2}{r} + \frac{3}{q} = \frac{3}{2}, \quad 2 \leq r \leq \infty
\]
(such an exponent pair $(r,q)$ is called {\it admissible}), so
then if $\nabla \phi \in L^2$, by a Sobolev inequality,
\[
  \| e^{i t \Delta} \phi \|_{L^{10}_x} \aleq \| \nabla e^{i t \Delta} \phi \|_{L^{\frac{30}{13}}_x}
  \in L^{10}_t,
\]
since $(r = 10, q = \frac{30}{13})$ is admissible.
\end{remark}
The next lemma is a standard extension of the local theory called a {\it perturbation} or 
{\it stability} result, which shows that any `approximate solution' has an actual solution remaining 
close to it. In our setting (see \cite{TVZ,KOPV}):
\begin{lemma} \label{perturbations}
Let $\tilde{u} : \R^3 \times I \to \C$ be defined on time interval $0 \in I \subset \R$ with
\[
  \| \tilde{u} \|_{L^\infty_t H^1_x (I) \cap L^{10}_t L^{10}_x (I)} \leq M,
\]
and suppose $u_0 \in H^1(\R^3)$ satisfies $\| u_0 \|_{L^2} \leq M$.
There exists $\delta_0 = \delta_0(M) > 0$ such that if for any $0 < \delta < \delta_0$,
$\tilde{u}$ is an approximate solution of~\eqref{NLS} in the sense
\[
  \| \nabla e \|_{L^{\frac{10}{7}}_t L^{\frac{10}{7}}_x (I)} \leq \delta, \qquad
  e := i \p_t \tilde{u} + \Delta \tilde{u} + |\tilde{u}|^4 \tilde{u} + \e |\tilde{u}|^{p-1} \tilde{u},
\]
with initial data close to $u_0$ in the sense
\[
  \| \nabla \left( \tilde{u}(\cdot,0) - u_0 \right) \|_{L^2} \leq \delta,
\]
then the solution $u$ of~\eqref{NLS} with initial data $u_0$ has $I_{max} \supset I$, and
\[
  \| \nabla \left( u - \tilde{u} \right) \|_{S(I)} \leq C(M) \delta.
\]
\end{lemma}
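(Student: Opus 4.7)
Setting $w := u - \tilde u$ and using the definition of $e$, the difference $w$ satisfies
\begin{align*}
i \p_t w + \Delta w = -[F(\tilde u + w) - F(\tilde u)] - e, \qquad F(v) := |v|^4 v + \eps |v|^{p-1} v,
\end{align*}
with $w(\cdot, 0) = u_0 - \tilde{u}(\cdot, 0)$. The plan is a standard long-time stability iteration: partition $I$ into $N = N(M)$ subintervals $I_j = [t_j, t_{j+1}]$ on which $\|\tilde u\|_{L^{10}_t L^{10}_x(I_j)} \leq \eta$, for a small threshold $\eta = \eta(M)$ to be chosen. On each $I_j$ a local stability estimate is established, and then iterated from $I_1$ onward.

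On a fixed $I_j$, Strichartz~\eqref{Strichartz} applied to $\nabla w$ in Duhamel form yields
\begin{align*}
\|\nabla w\|_{S(I_j)} \aleq \|\nabla w(\cdot, t_j)\|_{L^2} + \|\nabla e\|_{L^{10/7}_t L^{10/7}_x(I_j)} + \|\nabla[F(\tilde u + w) - F(\tilde u)]\|_{N(I_j)}.
\end{align*}
The main nonlinear difference is controlled by the pointwise bound
\begin{align*}
\bigl|\nabla(|u|^4 u - |\tilde u|^4 \tilde u)\bigr| \aleq (|\tilde u|^4 + |w|^4)|\nabla w| + (|\tilde u|^3 + |w|^3)|w||\nabla \tilde u|,
\end{align*}
paired with H\"older on the admissible Strichartz pair $(10, 30/13)$ and the Sobolev embedding $\dot W^{1, 30/13}_x \hookrightarrow L^{10}_x$ to produce
\begin{align*}
\|\nabla[|u|^4 u - |\tilde u|^4 \tilde u]\|_{L^2_t L^{6/5}_x(I_j)} \aleq \bigl(\eta^4 + \|\nabla w\|_{S(I_j)}^4\bigr) \|\nabla w\|_{S(I_j)}.
\end{align*}
The subcritical piece $\eps[|u|^{p-1}u - |\tilde u|^{p-1}\tilde u]$, with $3 \leq p < 5$, is treated analogously with a positive power of $|I_j|$ to spare from H\"older in time (absorbed by further subdividing $I$ and raising $N$), using the uniform control $\|\tilde u\|_{L^\infty_t H^1_x} \leq M$. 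A continuity/bootstrap argument on $I_j$, under the running assumption that $\|\nabla w\|_{S(I_j)}$ stays small, then closes to
\begin{align*}
\|\nabla w\|_{S(I_j)} \leq C_0(M)\bigl(\|\nabla w(\cdot, t_j)\|_{L^2} + \|\nabla e\|_{L^{10/7}_t L^{10/7}_x(I_j)}\bigr)
\end{align*}
provided $\eta$ is chosen small enough depending only on $M$.

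To iterate, note $\|\nabla w(\cdot, t_{j+1})\|_{L^2} \leq \|\nabla w\|_{S(I_j)}$, so unrolling the single-interval estimate across $I_1, \dots, I_N$ gives $\|\nabla w\|_{S(I)} \leq (2 C_0(M))^N \delta \leq C(M) \delta$ as long as $\delta \leq \delta_0(M)$ is small enough for the bootstrap hypothesis on every $I_j$ to remain valid. That $u$ is genuinely defined on all of $I$ (i.e.\ $I_{max} \supset I$) then follows from the blow-up criterion in Lemma~\ref{local}: the Strichartz bound on $\nabla w$ together with $\|\tilde u\|_{L^{10}_t L^{10}_x(I_j)} \leq \eta$ controls $\|u\|_{L^{10}_t L^{10}_x(I_j)}$ on each piece, precluding blow-up inside $I$. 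The main obstacle is bookkeeping: ensuring that the geometrically accumulated constant $(2C_0)^N$ over the $N = N(M)$ pieces is tamed by allowing $\delta_0$ to depend on $M$ through that very exponent, and simultaneously closing the bootstrap hypothesis on each subinterval before passing to the next.
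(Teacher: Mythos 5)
The paper does not prove this lemma; it invokes it as a standard perturbation/stability result and cites \cite{TVZ,KOPV}. Your outline is indeed the standard route that those references take: split $I$ into $N(M)$ pieces on which $\|\tilde u\|_{L^{10}_t L^{10}_x}$ is small, run a Strichartz bootstrap on each piece, and iterate, paying $(2C_0)^N$ in the end by shrinking $\delta_0$.

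There is, however, a concrete gap in the displayed nonlinear estimate. Your pointwise bound
\[
  \bigl|\nabla(|u|^4u - |\tilde u|^4\tilde u)\bigr| \lesssim
  (|\tilde u|^4 + |w|^4)|\nabla w| + (|\tilde u|^3 + |w|^3)|w|\,|\nabla\tilde u|
\]
contains the factor $|\nabla\tilde u|$ in its second term, yet your claimed consequence
$\|\nabla[\,\cdot\,]\|_{L^2_t L^{6/5}_x(I_j)} \lesssim (\eta^4 + \|\nabla w\|_{S(I_j)}^4)\|\nabla w\|_{S(I_j)}$
silently discards it: after H\"older the second term really produces
$(\eta^3 + \|\nabla w\|_S^3)\,\|\nabla w\|_S\,\|\nabla\tilde u\|_{L^{10}_t L^{30/13}_x(I_j)}$,
and the hypothesis only gives $\nabla\tilde u \in L^\infty_t L^2_x$, not any Strichartz norm. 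The missing ingredient is a preliminary bootstrap on $\tilde u$ itself: feed the approximate equation into Duhamel, apply Strichartz with $\|\tilde u\|_{L^{10}_{t,x}(I_j)}\leq\eta$ and $\|\nabla e\|_{L^{10/7}_{t,x}}\leq\delta$, and conclude $\|\nabla\tilde u\|_{S(I)} \leq C(M)$. Only with this bound in hand does the single-interval estimate close, with $\eta$ chosen small relative to $C(M)$. Without it the bootstrap does not contract and the claimed inequality does not follow.

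Two smaller points. First, your treatment of the subcritical term leans on ``a positive power of $|I_j|$,'' which tacitly assumes $I$ (hence each $I_j$) is compact; if $I$ is a half-line, the subcritical piece instead needs an interpolation between $L^\infty_t H^1_x$ and $L^{10}_{t,x}$ (or a splitting into an $L^2_t L^{6/5}_x$ and $L^1_t L^2_x$ piece) rather than a power of $|I_j|$. The references \cite{TVZ,KOPV} state their stability lemmas for compact $I$, which is how the present paper uses the result, so this is a matter of bookkeeping, but it should be made explicit. Second, a clean statement of the H\"older/Sobolev step for the first term would note that the pairing needed is $\|\nabla w\|_{L^{10}_t L^{30/13}_x}$ together with $\|w\|_{L^{10}_{t,x}} \lesssim \|\nabla w\|_{L^{10}_t L^{30/13}_x} \lesssim \|\nabla w\|_{S}$, which you gesture at but do not spell out.
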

\begin{remark}
The space-time norm $\nabla e \in L^{\frac{10}{7}}_t L^{\frac{10}{7}}_x$ in which the error is
measured is natural in light of the Strichartz estimates~\eqref{Strichartz}, since 
$L^{\frac{10}{7}}_t L^{\frac{10}{7}}_x$ is the dual space of  $L^{\frac{10}{3}}_t L^{\frac{10}{3}}_x$,
and $(\frac{10}{3}, \frac{10}{3})$ is an admissible exponent pair. 
\end{remark}

Given a local existence theory as above, an obvious next problem is to determine if the 
solutions from particular initial data $u_0$ are {\it global} ($I_{max} = \R$), or
exhibit {\it finite-time blow-up} ($T_{max} < \infty$ and/or $T_{min} < \infty$).
Theorem~\ref{dichotomy} solves this problem for radially-symmetric 
initial data lying `below the ground state' level of the action: 
for any $\eps > 0$, $\omega > 0$, set
\begin{equation} \label{inf}
  m_{\eps,\omega} := \inf \{ \mathcal{S}_{\eps,\omega}(u) \; |  \; 
  u \in H^1(\R^3) \setminus \{0\}, \mathcal{K}_\eps(u) = 0 \}
\end{equation}
(see~\eqref{minSp} for expressions for the functionals 
$\mathcal{S}_{\eps,\omega}$ and $\mathcal{K}_\eps$),
and note that for $\e \ll 1$ and $\om = \om(\e)$, by Theorem~\ref{theorem2} 
we have $m_{\e,\om} = S_{\e,\om}(Q_\e)$.
From here on, we fix a choice of
\[
  \e > 0, \quad \om > 0, \quad p \in (3,5)
 \]
(though some results discussed below extend to $p \in (\frac{7}{3}, 5)$):
\begin{theorem} \label{dichotomy2}
Let $u_0 \in H^1(\R^3)$ be radially-symmetric and satisfy
\[
  \mathcal{S}_{\eps,\omega} (u_0) < m_{\eps,\omega},
\]
and let $u$ be the corresponding solution to~\eqref{NLS}:
\begin{enumerate}
\item
If $\mathcal{K}_{\eps}(u_0) \geq 0$, $u$ is global, and scatters to $0$ as $t \to \pm \infty$; 
\item 
if $\mathcal{K}_{\eps}(u_0) < 0$, $u$ blows-up in finite time (in both time directions).
\end{enumerate}
\end{theorem}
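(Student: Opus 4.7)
My plan is to follow the now-standard Kenig--Merle concentration-compactness roadmap, with the modifications necessary to accommodate the scaling-breaking subcritical perturbation, which have already appeared in \cite{Slim, Slim2, KOPV}.

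\textbf{Variational preliminaries.} The first step is to verify that the two sets
\[
  \mathcal{A}^{\pm}_{\eps,\omega} := \{ u \in H^1(\R^3) \setminus \{0\} \; : \;
  \mathcal{S}_{\eps,\omega}(u) < m_{\eps,\omega}, \; \pm \mathcal{K}_\eps(u) > 0 \}
\]
together with $\{0\}$ partition the sub-threshold set and are invariant under the flow of \eqref{NLS}. Invariance follows from conservation of $\mathcal{S}_{\eps,\omega}$ along the flow (energy plus mass conservation) together with continuity of $\mathcal{K}_\eps$ in $H^1$, provided one rules out $\mathcal{K}_\eps(u(t)) = 0$ for $u(t) \not= 0$, which is immediate from the definition of $m_{\eps,\omega}$. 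In addition, on $\mathcal{A}^+_{\eps,\omega}$ one obtains the coercivity
\[
  \mathcal{K}_\eps(u) \gtrsim \min\left\{ \|\nabla u\|_{L^2}^2, \; m_{\eps,\omega} - \mathcal{S}_{\eps,\omega}(u) \right\},
\]
by considering the scaling $T_\mu u$ and using that $\mu \mapsto \mathcal{S}_{\eps,\omega}(T_\mu u)$ has a unique critical point at some $\mu_* < 1$ when $\mathcal{K}_\eps(u) > 0$, combined with the functional $\mathcal{I}_{\eps,\omega} = \mathcal{S}_{\eps,\omega} - \frac{1}{3} \mathcal{K}_\eps$ appearing in Lemma \ref{minseq}, which controls $\|\nabla u\|_{L^2}^2 + \omega \|u\|_{L^2}^2 + \eps \|u\|_{L^{p+1}}^{p+1}$ a priori. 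In particular $u \in \mathcal{A}^+_{\eps,\omega}$ gives a uniform $H^1$ bound, hence global existence in the first case.

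\textbf{The blow-up case.} If $u_0 \in \mathcal{A}^-_{\eps,\omega}$, the invariance yields $\mathcal{K}_\eps(u(t)) < 0$ on $I_{max}$, and the analogous coercivity on $\mathcal{A}^-_{\eps,\omega}$ gives
\[
  \mathcal{K}_\eps(u(t)) \leq -c(u_0) < 0, \qquad t \in I_{max}.
\]
Combined with the virial identity
\[
  \frac{d^2}{dt^2} \int |x|^2 |u|^2 \, dx = 8\, \mathcal{K}_\eps(u(t)),
\]
the classical Glassey argument gives finite-time blow-up for initial data in $\Sigma = H^1 \cap L^2(|x|^2 dx)$. For general radial $H^1$ data, I would localize the virial weight with a smooth cutoff $\chi_R$ equal to $|x|^2$ on $|x| \leq R$ and constant for $|x| \geq 2R$, and use radial Sobolev embedding $\|u\|_{L^\infty(|x| \geq R)} \lesssim R^{-1} \|u\|_{H^1}$ to control the error terms, as in Ogawa--Tsutsumi. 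Choosing $R$ large relative to the uniform $H^1$ bound (and using $\mathcal{K}_\eps(u(t)) \leq -c$) forces the localized variance to become negative in finite time, which is the desired blow-up.

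\textbf{The scattering case.} Global existence is immediate; the content is scattering. Define
\[
  m_c := \sup\{ m \leq m_{\eps,\omega} \; : \; \text{every radial } u_0 \in \mathcal{A}^+_{\eps,\omega} \text{ with } \mathcal{S}_{\eps,\omega}(u_0) < m \text{ yields } L^{10}_{t,x}\text{-scattering} \}.
\]
Small-data theory (Lemma \ref{local}) gives $m_c > 0$; the task is to prove $m_c = m_{\eps,\omega}$. Arguing by contradiction, assume $m_c < m_{\eps,\omega}$ and take a sequence $u_{0,n}$ of radial sub-threshold data with $\mathcal{S}_{\eps,\omega}(u_{0,n}) \to m_c$ whose solutions fail to scatter. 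Apply a linear profile decomposition adapted to the radial $\dot H^1$/$H^1$ setting (the statement we need is the one used in \cite{Kenig} with the modifications of \cite{KOPV, Slim, Slim2}; note that in the presence of the mass term we may drop space translations, and only time translations and $\dot H^1$ scalings $\lambda_n$ survive, with the additional alternative $\lambda_n \to 0$ or $\lambda_n \to \infty$ discarded via the perturbation lemma). The stability Lemma \ref{perturbations} applied to sums of nonlinear profiles, together with the sub-additivity $\mathcal{S}_{\eps,\omega}(\sum) \approx \sum \mathcal{S}_{\eps,\omega}(\cdot)$ on each profile, forces only one profile to survive, producing a \emph{critical element}: a global radial solution $u_c$ with $\mathcal{S}_{\eps,\omega}(u_c) = m_c$, $\mathcal{K}_\eps(u_c) > 0$, whose orbit $\{u_c(t)\}_{t \in \R}$ is precompact in $H^1(\R^3)$.

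\textbf{Rigidity.} To rule out $u_c$, apply the localized virial identity again: for $\chi_R$ as above,
\[
  \frac{d^2}{dt^2} \int \chi_R |u_c|^2 \, dx = 8\, \mathcal{K}_\eps(u_c(t)) + \mathcal{E}_R(t),
\]
where precompactness yields $\sup_t \mathcal{E}_R(t) \to 0$ as $R \to \infty$, while the coercivity on $\mathcal{A}^+_{\eps,\omega}$ (and $\mathcal{K}_\eps(u_c) \not\equiv 0$, lest $u_c \equiv 0$) gives $\mathcal{K}_\eps(u_c(t)) \geq c > 0$. Integrating over $[0,T]$ and using $\int \chi_R |u_c|^2 \lesssim R^2 \|u_c\|_{L^2}^2$ uniformly in $t$ yields a contradiction for $T$ large. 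This forces $u_c \equiv 0$, contradicting $\mathcal{S}_{\eps,\omega}(u_c) = m_c > 0$. The main obstacle in the whole argument is the construction of the critical element: the perturbation term $\eps|u|^{p-1}u$ is $H^1$-subcritical and non-scaling-invariant, so one must track carefully how it behaves under the profile rescalings (in particular ruling out $\lambda_n \to 0,\infty$) and verify the nonlinear profiles converge strongly enough to apply the stability lemma; this is exactly where the adaptations of \cite{Slim, Slim2, KOPV} come in.
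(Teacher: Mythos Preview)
Your roadmap matches the paper's: Kenig--Merle concentration-compactness for scattering, localized virial with radial Sobolev for blow-up, with the subcritical perturbation handled as in \cite{Slim,Slim2,KOPV}. Two points need correction.

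First, the claim that the uniform $H^1$ bound on $\mathcal{A}^+_{\eps,\omega}$ gives ``global existence in the first case'' (and later that ``global existence is immediate; the content is scattering'') is false in the energy-critical setting. A uniform $H^1$ bound does \emph{not} preclude finite-time blow-up of the $L^{10}_{t,x}$ norm, which is the relevant blow-up criterion here (Lemma~\ref{local}(2)). Global existence and scattering are produced \emph{simultaneously} by the concentration-compactness argument; the threshold sequence need not consist of a priori global solutions, and even the critical element is shown to be global only \emph{after} precompactness of its orbit is established.

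Second, you are imprecise at the pivotal step: profiles with $\lambda_n^j \to 0$ are not ``discarded'' or ``ruled out'' during the multi-profile stage. For such concentrating profiles the subcritical term scales away, and the nonlinear profile is approximated (via Lemma~\ref{perturbations}) by a solution of the \emph{unperturbed} critical problem~\eqref{NLS0}. The crucial input the paper uses here is the variational inequality $m_{\eps,\omega} \leq m_{0,0} = \E_0(W)$, which places these unperturbed profiles strictly below the Kenig--Merle threshold, so that \cite{Kenig} supplies their global $L^{10}_{t,x}$ bounds. You should make this mechanism explicit; without it the nonlinear profile construction does not close. (Incidentally, $\lambda_n^j \to \infty$ does not arise: it is the $H^1$ bound---not radial symmetry, which is what kills spatial translations---that forces $\lambda_n^j \in (0,1]$ in the profile decomposition used here.)
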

\begin{remark}
The argument which gives the finite-time blow-up (the second statement) is classical,
going back to~\cite{OgawaTsutsumi, Nawa}. It rests on the following ingredients:
conservation of mass and energy imply
$\S_{\e,\om}(u) \equiv \S_{\e,\om}(u_0) < m_{\e,\om}$,
so that the condition $\K_\e(u) < 0$ is preserved (by definition of $m_{\e,\om}$);
a spatially cut-off version of the formal {\it variance identity} for (NLS)
\begin{equation} \label{virial}
  \frac{d^2}{dt^2} \frac{1}{2} \int |x|^2 |u(x,t)|^2 dx =
  \frac{d}{dt} \int x \cdot \Im \left( \bar{u} \nabla u \right) dx = 
  2 \K_\e (u) \; ;
\end{equation}
and exploitation of radial symmetry to control the errors introduced by the cut-off.
In fact, a complete argument in exactly our setting is given as the proof 
of Theorem 1.3 in~\cite{Slim} (it is stated there for dimensions $\geq 4$
but in fact the proof covers dimension $3$ as well).
So we will focus here only on the proof of the first (scattering) statement.  
\end{remark}
The concentration-compactness approach of Kenig-Merle~\cite{Kenig} 
to proving the scattering statement is by now standard. 
In particular, \cite{Slim2} provides a complete proof for the analogous problem
in dimensions $\geq 5$. In fact, the proof there is more complicated for two reasons:
there is no radial symmetry restriction; and in dimension $n$, the corresponding
nonlinearity includes the term $|u|^{p-1} u$ with $p > 1 + \frac{4}{n}$ loses
smoothness, creating extra technical difficulties. 
We will therefore provide just a sketch of the (simpler) argument for our case, 
closely following~\cite{KOPV}, where this approach is implemented for the {\it defocusing} 
quintic NLS perturbed by a cubic term, and taking the additional variational arguments we need
here from~\cite{Slim,Slim2}, highlighting points where modifications are needed.  

In the next lemma we recall some standard variational estimates for functions with 
action below the ground state level $m_{\e,\om}$. 
The idea goes back to~\cite{PayneSattinger},
but proofs in this setting are found in~\cite{Slim,Slim2}.
Recall the `unperturbed' ground state level is attained by the Aubin-Talenti function $W$:
\[
  m_{0,0} := \E_0(W) = \inf \{ \mathcal{E}_{0}(u) \; |  \; 
  u \in H^1(\R^3) \setminus \{0\}, \mathcal{K}_0(u) = 0 \},
\]
and introduce the auxilliary functional
\[
\begin{split}
  \mathcal{I}_\om(u) & := \S_{\e,\om}(u) - \frac{2}{3(p-1)} \K_\e(u) \\ &=
  \frac{p-\frac{7}{3}}{2(p-1)} \int |\nabla u|^2 + \frac{5-p}{6(p-1)} \int |u|^6
  + \frac{1}{2} \om \int |u|^2
\end{split}
\]
which is useful since all its terms are positive, and note
\begin{equation} \label{H1bound}
  \K_\e(u) \geq 0 \; \implies \; 
  \| u \|_{H^1}^2 \aleq \mathcal{I}_\om(u) \leq \S_{\e,\om}(u).
\end{equation}
Define, for $0 < m^* < m_{\e,\om}$, the set
\[
  \mathcal{A}_{m^*} :=  
  \{ u \in H^1(\R^3) \; | \; \S_{\e,\om}(u) \leq m^*, \; K_\e(u) > 0 \}
\]
and note that it is is preserved by~\eqref{NLS}:
\[
  u_0 \in \mathcal{A}_{m^*} \; \implies  u(\cdot,t) \in \mathcal{A}_{m^*}
  \mbox{ for all } t \in I_{max}.
\]
Indeed, by conservation of mass and energy
$\S_{\e,\om}(u(\cdot,t)) = \S_{\e,\om}(u_0) \leq m^*$. Moreover if for some 
$t_0 \in I_{max}$, $\K_\e(u(\cdot,t_0)) \leq 0$, then by $H^1$ continuity
of $u(\cdot,t)$ and of $\K_\e$, we must have $\K_\e(u(\cdot,t_1)) = 0$
for some $t_1 \in I_{max}$, contradicting $m^* < m_{\e,\om}$.   
\begin{lemma}  
\begin{enumerate}
\item 
$m_{\e,\om} \leq m_{0,0}$, and~\eqref{inf} admits a minimizer if
$m_{\e,\om} < m_{0,0}$;
\item
we have
\begin{equation} \label{inf2}
  m_{\e,\om} = \inf \{ \mathcal{I}_{\omega}(u) \; |  \; 
  u \in H^1(\R^3) \setminus \{0\}, \mathcal{K}_\eps(u) \leq 0 \},
\end{equation}
and a minimizer for this problem is a minimizer for~\eqref{inf}, and vice versa;
\item
given $0 < m^* < m_{\e,\om}$,
there is $\kappa(m^*) > 0$ such that
\begin{equation} \label{gap}
  u \in \mathcal{A}_{m^*} \; \implies \; 
  \K_\e(u) \geq \kappa(m^*) > 0. 
\end{equation}
\end{enumerate}
\end{lemma}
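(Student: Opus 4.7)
The plan is to take each of the three assertions in turn, with Parts (2) and (3) driven by the $T_\mu$ scaling and Part (1) by a combination of test functions and concentration-compactness.

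For Part (1), to establish $m_{\e,\om} \leq m_{0,0}$, I would exhibit a radial test sequence $u_n \in H^1(\R^3)$ approximating the Aubin-Talenti profile in the scaling-invariant norms while taming its $L^2$ tail. A natural candidate is $u_n(x) = n^{1/2}(\chi_n W)(n x)$ with $\chi_n$ a smooth radial cutoff at radius $n$: using $W \sim \sqrt{3}/|x|$ one computes $\|\nabla u_n\|_{L^2}^2 \to \|\nabla W\|_{L^2}^2$, $\|u_n\|_{L^6}^6 \to \|W\|_{L^6}^6$, and $\|u_n\|_{L^2}^2, \|u_n\|_{L^{p+1}}^{p+1} \to 0$, so $\K_\e(u_n) \to 0^-$ and $\S_{\e,\om}(u_n) \to \E_0(W) = m_{0,0}$. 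A final rescaling $T_{\nu_n}$ with $\nu_n \to 1$ enforces $\K_\e = 0$ and concludes the upper bound. For existence under the strict inequality $m_{\e,\om} < m_{0,0}$, a Schwarz-symmetrized minimizing sequence is $H^1$-bounded (by coercivity of $\mathcal{I}_\om$ on $\{\K_\e = 0\}$) and converges weakly to some $u^*$; radial compactness handles the $L^{p+1}$ term, the Brezis-Lieb lemma handles $L^6$, and the strict inequality rules out concentration into a rescaled Aubin-Talenti bubble (which would contribute $m_{0,0}$ to the action). The weak limit is then a non-trivial minimizer, a critical point of $\S_{\e,\om}$ constrained by $\K_\e = 0$, hence a solution of~\eqref{pureelliptic}.

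For Part (2), a direct calculation (designed so the $\e\|u\|_{L^{p+1}}^{p+1}$ term cancels) yields
\begin{equation*}
\mathcal{I}_\om(T_\mu u) \;=\; \frac{3p-7}{6(p-1)}\,\mu^2 \|\nabla u\|_{L^2}^2 \;+\; \frac{5-p}{6(p-1)}\,\mu^6 \|u\|_{L^6}^6 \;+\; \frac{\om}{2} \|u\|_{L^2}^2,
\end{equation*}
which is strictly increasing in $\mu > 0$, all coefficients being positive for $\tfrac{7}{3} < p < 5$. Given $u \in H^1 \setminus \{0\}$ with $\K_\e(u) \leq 0$, since $\K_\e(T_\mu u) > 0$ for $\mu$ near $0$, continuity provides $\mu_0 \in (0,1]$ with $\K_\e(T_{\mu_0} u) = 0$, and then
\begin{equation*}
\mathcal{I}_\om(u) \;\geq\; \mathcal{I}_\om(T_{\mu_0} u) \;=\; \S_{\e,\om}(T_{\mu_0} u) \;\geq\; m_{\e,\om}.
\end{equation*}
The reverse inequality is immediate since $\K_\e(u) = 0$ forces $\mathcal{I}_\om(u) = \S_{\e,\om}(u)$. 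Equivalence of minimizers follows because any minimizer for the relaxed problem~\eqref{inf2} must saturate $\K_\e(u) = 0$, else a small rescaling strictly decreases $\mathcal{I}_\om$.

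For Part (3), fix $u \in \mathcal{A}_{m^*}$ and use the scaling structure in reverse: the behavior of $\K_\e(T_\mu u)$ in $\mu$ (positive at $\mu = 1$, tending to $-\infty$ as $\mu \to \infty$) produces a unique $\mu_0 > 1$ with $\K_\e(T_{\mu_0} u) = 0$. Integrating the identity $\tfrac{d}{d\mu}\S_{\e,\om}(T_\mu u) = \mu^{-1}\K_\e(T_\mu u)$ and using the term-by-term comparison $\K_\e(T_\mu u) \leq \mu^2 \K_\e(u)$ for $\mu \geq 1$ (valid since $6 \geq 2$ and $\tfrac{3(p-1)}{2} \geq 2$) gives
\begin{equation*}
m_{\e,\om} - m^* \;\leq\; \S_{\e,\om}(T_{\mu_0} u) - \S_{\e,\om}(u) \;\leq\; \frac{\mu_0^2 - 1}{2}\,\K_\e(u).
\end{equation*}
The hard part will be producing a uniform upper bound on $\mu_0$ across $\mathcal{A}_{m^*}$: for $u$ with $\|\nabla u\|_{L^2}$ bounded below, the relation $\K_\e(T_{\mu_0} u) = 0$ combined with the sharp Sobolev inequality $\|u\|_{L^6}^6 \leq \|W\|_{L^6}^{-4}\|\nabla u\|_{L^2}^6$ gives $\mu_0^4 \leq \|\nabla u\|_{L^2}^2/\|u\|_{L^6}^6$, which is bounded; for $u$ with $\|\nabla u\|_{L^2}$ small, Gagliardo-Nirenberg interpolation shows $\K_\e(u) \approx \|\nabla u\|_{L^2}^2$ dominates the perturbative term, so the constraint $\S_{\e,\om}(u) \leq m^* < m_{\e,\om}$ prevents arbitrary shrinkage. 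Combining the two regimes delivers the uniform $\kappa(m^*) > 0$.
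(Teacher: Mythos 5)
The paper does not prove this lemma --- it simply cites \cite{PayneSattinger,Slim,Slim2} --- so there is no in-text argument to compare against, and I assess your proposal on its own. Parts (1) and (2) are essentially right: the rescaled cutoff test functions give $m_{\e,\om}\leq m_{0,0}$ (the sign claim $\K_\e(u_n)\to 0^-$ is not actually justified, since the cutoff can push $\K_0$ either way near the critical Sobolev extremal, but the $T_{\nu_n}$ rescaling you append repairs it); the existence sketch via symmetrization and Brezis--Nirenberg-type compactness under the strict inequality is the standard route; and the strict monotonicity of $\mu\mapsto\mathcal{I}_\om(T_\mu u)$, together with the identity $\mathcal{I}_\om=\S_{\e,\om}$ on $\{\K_\e=0\}$, gives Part (2) cleanly.

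Part (3), however, contains a genuine gap, and in fact the statement exactly as written cannot hold: fix $\phi\in H^1(\R^3)\setminus\{0\}$ and set $u_t=t\phi$; as $t\to0^+$ one has $\S_{\e,\om}(u_t)\to0<m^*$ and
\[
\K_\e(u_t)=t^2\|\nabla\phi\|_{L^2}^2-t^6\|\phi\|_{L^6}^6-\tfrac{3(p-1)}{2(p+1)}\e\,t^{p+1}\|\phi\|_{L^{p+1}}^{p+1}\to0^+,
\]
so eventually $u_t\in\mathcal{A}_{m^*}$ while $\K_\e(u_t)\to0$. The version of this lemma in \cite{Slim,Slim2} carries a dichotomy of the form $\K_\e(u)\geq\min\bigl(\kappa(m^*),c\,\|\nabla u\|_{L^2}^2\bigr)$, which is what the Palais--Smale argument actually needs, since small-data scattering keeps $\|\nabla u_n(\cdot,0)\|_{L^2}$ bounded away from zero along the critical sequence. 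Your argument fails at precisely the point it must. In the small-$\|\nabla u\|$ regime, the assertion that the constraint $\S_{\e,\om}(u)\leq m^*$ ``prevents arbitrary shrinkage'' is false, as the $u_t$ example shows. In the regime $\|\nabla u\|_{L^2}^2\geq\delta_0$, the step ``$\mu_0^4\leq\|\nabla u\|_{L^2}^2/\|u\|_{L^6}^6$, which is bounded'' is unsupported: it needs $\|u\|_{L^6}$ bounded \emph{below}, which a lower bound on $\|\nabla u\|_{L^2}$ does not give, and the sharp Sobolev inequality you invoke supplies only the opposite inequality $\|u\|_{L^6}\aleq\|\nabla u\|_{L^2}$. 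That case can be repaired by a further split: if $\|\nabla u\|_{L^2}^2\geq\delta_0$ then either $\K_\e(u)\geq\delta_0/2$ (done), or $\|u\|_{L^6}^6+\tfrac{3(p-1)}{2(p+1)}\e\|u\|_{L^{p+1}}^{p+1}\geq\delta_0/2$, and then $\K_\e(T_{\mu_0}u)=0$ bounds $\mu_0$ via one of $\mu_0^4\leq\|\nabla u\|_{L^2}^2/\|u\|_{L^6}^6$ or $\mu_0^{\frac{3(p-1)}{2}-2}\leq\|\nabla u\|_{L^2}^2\big/\bigl(\tfrac{3(p-1)}{2(p+1)}\e\|u\|_{L^{p+1}}^{p+1}\bigr)$; but the small-$\|\nabla u\|$ case cannot be saved without amending the statement to the dichotomy form above.
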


After the local theory, and in particular the perturbation Lemma~\ref{perturbations}, 
the key analytical ingredient is a {\it profile decomposition}, introduced into
the analysis of critical nonlinear dispersive PDE by~\cite{BahouriGerard, Keraani}.
This version, taken from~\cite{KOPV} (and simplified to the radially-symmetric setting), 
can be thought of as
making precise the lack of compactness in the Strichartz estimates for $\dot H^1(\R^3)$ data,
when the data is bounded in $H^1(\R^3)$:
\begin{lemma}(\cite{KOPV}, Theorem 7.5)   \label{PD}
Let $\{ f_n \}_{n=1}^\infty$ be a sequence of radially symmetric functions,
bounded in $H^1(\R^3)$. Possibly passing to a subsequence, there is 
$J^* \in \{0, 1, 2, \ldots \} \cup \{ \infty \}$ such that for each finite $1 \leq j \leq J^*$
there exist (radially symmetric) `profiles' $\phi^j \in \dot H^1 \setminus \{ 0 \}$,
`scales' $\{ \lambda^j_n \}_{n=1}^\infty \subset (0, 1]$, and
`times'  $\{ t^j_n \}_{n=1}^\infty \subset \R$ satisfying, as $n \to \infty$,
\[
  \lambda_n^j \equiv 1 \; \mbox{ or } \;  \lambda_n^j \to 0,
  \qquad
  t_n^j \equiv 0 \; \mbox{ or } \; t_n^j \to \pm \infty.
\]
If $\lambda^j_n \equiv 1$ then additionally $\phi^j \in L^2(\R^3)$.
For some $0 < \theta < 1$, define
\[
  \phi^j_n(x) := \left\{ \begin{array}{cc}
  \left[ e^{i t_n^j \Delta} \phi^j \right] (x) & \lambda_n^j \equiv 1 \\
  (\lambda^j_n)^{-\frac{1}{2}} \left[ e^{i t_n^j \Delta} P_{\geq (\lambda_n^j)^\theta} 
  \phi^j \right] \left( \frac{x}{\lambda^j_n} \right) & \lambda_n^j \to 0, 
  \end{array} \right.
\] 
where $P_{\geq N}$ denotes a standard smooth Fourier multiplier operator
(Littlewood-Paley projector) which removes the Fourier frequencies $\leq N$.  
Then for each finite $1 \leq J \leq J^*$ we have the decomposition
\[
  f_n = \sum_{j=1}^J \phi_n^j + w_n^J
\]
with:
\begin{itemize}
\item
small remainder: 
$\lim\limits_{J \to J^*} \limsup\limits_{n \to \infty} \| e^{i t \Delta} w_n^J \|_{L^{10}_t L^{10}_x (\R)}
= 0$
\item
decoupling: for each $J$,
$\lim\limits_{n \to \infty} \left[ \M(f_n) - \sum\limits_{j=1}^J \M(\phi_n^j)
- \M(w_n^J) \right] = 0$,
and the same statement for the functionals $\E_{\e}$, $\K_\e$,
$\S_{\om,\e}$ and $\mathcal{I}_\om$; 
\item
orthogonality:
$\lim\limits_{n \to \infty} \left[ \frac{\la_n^j}{\la_n^k} + \frac{\la_n^k}{\la_n^j} 
+ \frac{|t_n^j (\la_n^j)^2 - t_n^k (\la_n^k)^2|}{\la_n^j \la_n^k}
\right] = \infty$ for $j \not= k$.
\end{itemize}
\end{lemma}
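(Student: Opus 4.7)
The plan is to implement the iterative profile extraction of Bahouri-Gérard and Keraani, adapted to $H^1$-bounded radially-symmetric sequences, exactly as carried out in \cite{KOPV}. If $\|e^{it\Delta} f_n\|_{L^{10}_{t,x}(\R)} \to 0$, take $J^* = 0$ and we are done. Otherwise, combine the standard Strichartz estimate $\|e^{it\Delta} f_n\|_{L^{10}_{t,x}} \aleq \|f_n\|_{\dot H^1}$ with a refined (frequency-localized) Strichartz inequality to find scales $\lambda_n^1 \in (0,\infty)$ and times $t_n^1 \in \R$ such that, after rescaling, $(\lambda_n^1)^{1/2} [e^{it_n^1 \Delta} f_n](\lambda_n^1 \cdot)$ has a nonzero weak $\dot H^1$ limit $\phi^1$. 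Pass to a subsequence on which $\lambda_n^1$ converges in $[0,\infty]$ and $t_n^1 (\lambda_n^1)^2$ in $[-\infty,\infty]$. The $L^2$ boundedness of $f_n$ (from the $H^1$ bound) rules out $\lambda_n^1 \to \infty$, so we may take $\lambda_n^1 \in (0,1]$; when $\lambda_n^1 \not\to 0$ we normalize to $\lambda_n^1 \equiv 1$, and $\phi^1 \in L^2$ then follows as a weak limit of an $L^2$-bounded sequence. Radial symmetry of $f_n$ removes any translation parameter.

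Having extracted the first profile, subtract it off, using the Littlewood-Paley truncation at frequency $(\lambda_n^1)^\theta$ (to guarantee $\phi_n^1 \in H^1$ even when $\phi^1 \not\in L^2$), to form $w_n^1 = f_n - \phi_n^1$, and iterate the extraction on $w_n^1$. Pythagorean decoupling of the $\dot H^1$ norm, a consequence of weak convergence, ensures the process either terminates or produces profiles $\phi^j$ with $\|\phi^j\|_{\dot H^1} \to 0$. Coupling this with the refined Strichartz inequality, the quantity $\sup_{N} \|e^{it\Delta} P_N w_n^J\|_{L^{10}_{t,x}}$ must then tend to zero as $J \to J^*$, yielding the small-remainder claim. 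Orthogonality of parameters for $j \neq k$ is automatic from the inductive construction: if two profiles were asymptotically at the same scale and time, the second extraction would have been absorbed into the first.

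Finally, the decoupling of $\M$ and of $\|\cdot\|_{L^6}^6$ follow from Brezis-Lieb type arguments combined with parameter orthogonality. The decoupling for $\E_\e$, $\K_\e$, $\S_{\e,\om}$ and $\mathcal{I}_\om$ then reduces to the corresponding statement for $\|\cdot\|_{L^{p+1}}^{p+1}$, which follows by interpolating between $L^2$ and $L^6$ (using $2 < p+1 < 6$) and invoking the already-established decoupling of those endpoint norms. The main technical obstacle is the calibration of the frequency cutoff: $\theta \in (0,1)$ must be chosen so that $P_{\geq (\lambda_n^j)^\theta}$ is fine enough that $\phi_n^j \in H^1$, yet coarse enough that the difference between $\phi_n^j$ and the untruncated rescaled profile vanishes in the norms needed for orthogonality and for the remainder estimate. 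This is precisely the balance achieved in \cite{KOPV}, which we invoke.
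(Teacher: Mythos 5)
This lemma is not proved in the paper; it is quoted verbatim from \cite{KOPV}, Theorem 7.5, so there is no in-paper proof to compare against, and your account of the iterative Bahouri--G\'erard/Keraani extraction is the standard route and matches the spirit of what the paper delegates to \cite{KOPV}. Most of the outline is sound: ruling out $\lambda_n^j\to\infty$ via $L^2$ boundedness, the role of the cutoff $P_{\geq(\lambda_n^j)^\theta}$ in making $\phi_n^j\in H^1$, radial symmetry eliminating translations, and the refined Strichartz mechanism for the small remainder.

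There is, however, one genuinely incorrect step. You claim the decoupling of $\|\cdot\|_{L^{p+1}}^{p+1}$ follows by ``interpolating between $L^2$ and $L^6$ and invoking the already-established decoupling of those endpoint norms.'' Interpolation gives only a one-sided inequality $\|f\|_{L^{p+1}}\leq\|f\|_{L^2}^\sigma\|f\|_{L^6}^{1-\sigma}$; it cannot upgrade the Pythagorean asymptotic equalities for $L^2$ and $L^6$ into one for $L^{p+1}$. The actual argument (as in \cite{KOPV}) uses the subcriticality directly: for any concentrating profile ($\lambda_n^j\to 0$), the scaling $(\lambda_n^j)^{-1/2}\phi(\cdot/\lambda_n^j)$ gives $\|\phi_n^j\|_{L^{p+1}}^{p+1}\sim(\lambda_n^j)^{3-\frac{p+1}{2}}\to 0$ since $p<5$, so concentrating bubbles contribute nothing to the subcritical norm; while for the $\lambda_n^j\equiv 1$ profiles (and the remainder), one uses the $H^1$ bound, radial symmetry (Strauss decay), and a Rellich-type compactness/Br\'ezis--Lieb argument to get the asymptotic additivity. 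This additional structure---vanishing of small-scale bubbles in the subcritical norm---is essential and does not reduce to an interpolation.
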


The global existence and scattering statement 1 of Theorem~\ref{dichotomy}
is established by a contradiction argument. For $0 < m < m_{\e,\om}$, set
\[
  \tau(m) := \sup \left \{ \| u \|_{L^{10}_t L^{10}_x(I_{max})} \; | \; 
  \S_{\eps,\om}(u_0) \leq m, \; \K_\e(u_0) > 0 \right \}
\]
where the supremum is taken over all radially-symmetric solutions of~\eqref{NLS}
whose data $u_0$ satisfies the given conditions.
It follows from the local theory above that $\tau$ is non-decreasing, continuous function 
of $m$ into $[0,\infty]$, and that $\tau(m) < \infty$ for sufficiently small $m$
(by part 4 of Lemma~\ref{local}).
By parts 2-3 of Lemma~\ref{local}, if $\tau(m) < \infty$ for all $m < m_{\e,\om}$,
the first statement of  Theorem~\ref{dichotomy} follows. So we suppose this is {\it not}
the case, and that in fact
\[
  m^* := \sup \{ m \; | \: 0 < m < m_{\eps,\om}, \; \tau(m) < \infty \} < m_{\eps,\om}.
\]
By continuity, $\tau(m^*) = \infty$, and so there exists a sequence $u_n(x,t)$ of
{\it global}, radially-symmetric solutions of~\eqref{NLS} satisfying
\begin{equation} \label{sublevel}
  \S_{\e,\om}(u_n) \leq m^*, \qquad \K_\e(u_n(\cdot,0)) > 0,
\end{equation}
and
\begin{equation} \label{L10}
  \lim_{n \to \infty} \| u_n \|_{L^{10}_t L^{10}_x([0,\infty))} = 
  \lim_{n \to \infty} \| u_n \|_{L^{10}_t L^{10}_x((-\infty,0])} = \infty
\end{equation}
(the last condition can be arranged by time shifting, if needed).
The idea is to pass to a limit in this sequence in order to obtain a solution
sitting at the threshold action $m^*$.
\begin{lemma} \label{Palais-Smale}
There is a subsequence (still labelled $u_n$) such that $u_n(x,0)$ converges in $H^1(\R^3)$.
\end{lemma}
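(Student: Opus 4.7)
The plan is to follow the standard Kenig--Merle concentration-compactness approach as in~\cite{Kenig,KOPV,Slim2}: apply the profile decomposition to $f_n := u_n(\cdot,0)$, attach a nonlinear profile to each linear profile, and argue by contradiction that only one profile (with parameters $\lambda_n \equiv 1$ and $t_n \equiv 0$) can survive, with vanishing remainder.

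First, from~\eqref{sublevel} and~\eqref{H1bound} the sequence $f_n$ is bounded in $H^1(\R^3)$, so Lemma~\ref{PD} produces profiles $\phi^j$ with parameters $(\lambda_n^j, t_n^j)$ and a remainder $w_n^J$ whose linear evolution is asymptotically small in $L^{10}_t L^{10}_x$. For each profile I would construct a nonlinear profile $v^j$: if $\lambda_n^j \equiv 1$ and $t_n^j \equiv 0$ then $v^j$ solves~\eqref{NLS} with data $\phi^j$; if $\lambda_n^j \equiv 1$ and $t_n^j \to \pm\infty$ then $v^j$ is the solution of~\eqref{NLS} with scattering datum $\phi^j$, which is global and has finite $L^{10}_t L^{10}_x$ norm by small-data theory (Lemma~\ref{local}, part~4); and if $\lambda_n^j \to 0$ then the effective perturbation strength $\e (\lambda_n^j)^{(5-p)/2}$ tends to $0$ (since $p<5$), so $v^j$ solves the unperturbed equation~\eqref{NLS0}. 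The decoupling conclusions in Lemma~\ref{PD}, combined with the gap estimate~\eqref{gap} and the inequality $m^* < m_{\e,\om} \leq m_{0,0} = \E_0(W)$, force each profile to carry action at most $m^*$ while preserving $\K_\e \geq 0$ (respectively $\K_0 \geq 0$), so the radial Kenig--Merle theorem~\cite{Kenig} applies to any concentrating profile.

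Second, the contradiction step: if there were either two or more non-trivial profiles, or a single profile of strictly sub-threshold action, then every nonlinear profile would scatter with finite $L^{10}_t L^{10}_x$ norm (by definition of $m^*$ in the non-rescaled case, and by~\cite{Kenig} in the rescaled case). Assembling an approximate solution
\[
  \tilde{u}_n^J := \sum_{j=1}^J v_n^j + e^{it\Delta} w_n^J,
\]
where $v_n^j$ is the appropriately translated/rescaled nonlinear profile, and invoking orthogonality of the parameters to decouple $L^{10}_t L^{10}_x$ norms, the stability Lemma~\ref{perturbations} then promotes this into a uniform bound on $\|u_n\|_{L^{10}_t L^{10}_x (\R)}$, contradicting~\eqref{L10}. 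Hence $J^* = 1$, and $\|e^{it\Delta} w_n\|_{L^{10}_t L^{10}_x} \to 0$; the same stability argument rules out $\lambda_n^1 \to 0$ and $t_n^1 \to \pm\infty$, since in either case $v^1$ would scatter. Thus $\lambda_n^1 \equiv 1$, $t_n^1 \equiv 0$, $\phi^1 \in H^1$, and $\mathcal{S}_{\e,\om}(\phi^1) = m^*$. Finally, decoupling of the positive-definite functional $\mathcal{I}_\om$ yields $\mathcal{I}_\om(w_n) \to 0$, hence $\|w_n\|_{H^1} \to 0$ by~\eqref{H1bound}, proving $f_n \to \phi^1$ in $H^1(\R^3)$ along the subsequence.

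I expect the main obstacle to be the careful treatment of the concentrating profiles ($\lambda_n^j \to 0$): one must verify that the rescaled nonlinear profile equation converges to~\eqref{NLS0} in a sense strong enough to justify invoking the Kenig--Merle scattering theorem, handle the frequency truncation $P_{\geq (\lambda_n^j)^\theta}$ appearing in the definition of $\phi_n^j$, and control the extra scaling-breaking error $\e |\tilde{u}_n^J|^{p-1} \tilde{u}_n^J$ in the $L^{10/7}_t L^{10/7}_x$ norm when applying the stability lemma, following the strategy of~\cite{Slim2} and~\cite{KOPV}.
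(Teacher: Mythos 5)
Your proposal follows essentially the same Kenig--Merle concentration-compactness strategy as the paper: bound $u_n(\cdot,0)$ in $H^1$ via \eqref{H1bound}, apply the profile decomposition (Lemma~\ref{PD}), use the decoupling of $\S_{\e,\om}$, $\K_\e$, $\mathcal{I}_\om$ together with the gap \eqref{gap} and $m^*<m_{\e,\om}\leq m_{0,0}$ to show each profile is sub-threshold, attach nonlinear profiles (invoking \cite{Kenig} for the concentrating ones), assemble an approximate solution for the stability Lemma~\ref{perturbations} to contradict \eqref{L10}, and finally kill $\lambda_n^1\to 0$, $t_n^1\to\pm\infty$ and conclude $\|w_n^1\|_{H^1}\to 0$ from the positivity of $\mathcal{I}_\om$.

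One attribution to tighten: for the profiles with $\lambda_n^j\equiv 1$ and $t_n^j\to\pm\infty$, global existence and the uniform $L^{10}_tL^{10}_x$ bound on $v^j$ do \emph{not} come from small-data theory (Lemma~\ref{local}, part 4) --- the profile $\phi^j$ has no reason to be small. Small data only produces the wave operator, i.e.\ a solution on a half-line near $t=\pm\infty$ agreeing asymptotically with $e^{it\Delta}\phi^j$. Extending $v^j$ to a global solution with finite scattering norm requires the sub-threshold bound \eqref{BelowThresh} together with the inductive definition of $m^*$, yielding $\|v_n^j\|_{L^{10}_tL^{10}_x(\R)}\leq\tau(m^*-\eta)<\infty$, exactly as you correctly state in your contradiction step. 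With that justification substituted, your argument matches the paper's.
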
 
\begin{proof}
This is essentially Proposition 9.1 of~\cite{KOPV}, with slight modifications to 
incorporate the variational structure. We give a brief sketch.
The sequence $u_n(\cdot,0)$ is bounded in $H^1$ by~\eqref{H1bound}, so
we may apply the profile decomposition Lemma~\ref{PD}: up to subsequence,
\[
  u_n(\cdot,0) = \sum_{j=1}^J \phi_n^j + w_n^J.
\]
If we can show there is only one profile ($J^*=1$), that
$\la_n^1 \equiv 1$, $t_n^1 \equiv 0$, and that $w_n^1 \to 0$ in $H^1$,
we have proved the lemma. 
By~\eqref{sublevel} and the decoupling,
\[
\begin{split}
  m^* - \frac{2}{3(p-1)} \kappa(m^*) & \geq  \S_{\e,\om}(u_n(\cdot,0)) - \frac{2}{3(p-1)} \K_\e(u_n(\cdot,0)) \\
  &= \mathcal{I}_\om(u_n(\cdot,0)) 
  = \sum_{j=1}^J \mathcal{I}_\om(\phi^j_n) + \mathcal{I}_\om(w_n^J)
  + o(1), 
\end{split}
\]
and since $\mathcal{I}_\om$ is non-negative,
we have, for $n$ large enough, 
$\mathcal{I}_\om(\phi^j_n) < m^*$ for each $j$ and
$\mathcal{I}_\om(w_n^J) < m^*$.
Since $m^* < m_{\e,\om}$, it follows from~\eqref{inf2} that 
$\K_\e(\phi^j_n) > 0$ and $\K_\e(w_n^J) \geq 0$, 
so also $\S_{\e,\om}(\phi^j_n) > 0$ and  $\S_{\e,\om}(w_n^J) \geq 0$.
Hence if there is more than one profile, by the decoupling 
\[
  m^* \geq  \S_{\e,\om}(u_n(\cdot,0))
  = \sum_{j=1}^J \S_{\e,\om}(\phi^j_n) + \S_{\e,\om}(w_n^J)
  + o(1), 
\]
we have, for each $j$, and $n$ large enough, for some $\eta > 0$,
\begin{equation} \label{BelowThresh}
  \S_{\e,\om}(\phi^j_n) \leq m^*-\eta, \quad
  \K_\e(\phi^j_n) > 0.
\end{equation}
Following~\cite{KOPV}, we introduce {\it nonlinear profiles} $v_n^j$ associated
to each $\phi_n^j$. 

First, suppose $\la_n^j \equiv 1$. If $t_n^j \equiv 0$, then
$v_n^j = v^j$ is defined to be the solution to~\eqref{NLS} with initial data
$\phi^j$. If $t_n^j \to \pm \infty$, $v^j$ is defined to be the solution scattering 
(in $H^1$) to $e^{i t \Delta} \phi^j$ as $t \to \pm \infty$, and
$v^j_n(x,t) := v^j(t + t_n^j)$. In both cases, it follows from~\eqref{BelowThresh}
that $v^j_n$ is a global solution, with 
$\| v^j_n \|_{L^{10}_t L^{10}_x (\R)} \leq \tau(m^*-\eta) < \infty$.

For the case $\la_n^j \to 0$, we simply let $v_n^j$ be the solution of~\eqref{NLS}
with initial data $\phi^j_n$. 
As in \cite{KOPV} Proposition 8.3, $v_n^j$ is approximated by the solution 
$\tilde{u}_n^j$ of the {\it unperturbed} critical NLS~\eqref{NLS0} (since the profile is concentrating, the sub-critical perturbation `scales away') with data $\phi^j_n$
(or by a scattering procedure in case $t_j^n \to \pm \infty$).
The key additional point here is that by~\eqref{BelowThresh}, and since
$m^* < m_{\e,\om} \leq m_{0,0}$, it follows that for $n$ large enough
\[
  \E_{0}(v_n^j) \leq m^* < m_{0,0}, \quad \K_0(v_n^j) > 0,
\]
and so by~\cite{Kenig}, $\tilde{u}_n^j$ is a {\it global} solution of~\eqref{NLS0},
with $\| \tilde{u}_n^j \|_{L^{10}_t L^{10}_x (\R)} \leq C(m^*) < \infty$.
It then follows from Lemma~\ref{perturbations} that the same is true of
$v_n^j$.

These nonlinear profiles are used to construct what are shown in~\cite{KOPV}
to be increasingly accurate (for sufficiently large $J$ and $n$)
approximate solutions in the sense of Lemma~\ref{perturbations},
\[
  u_n^J(x,t) := \sum_{j=1}^J v_n^j(x,t) + e^{i t \Delta} w_n^J
\]
which are moreover global with uniform space-time bounds.   
This contradicts~\eqref{L10}.

Hence there is only one profile: $J^*=1$, and the decoupling also
implies $\| w^1_n \|_{H^1} \to 0$.
Finally, the possibilities $t^1_n \to \pm \infty$ or $\la^1_n \to 0$
are excluded just as in~\cite{KOPV}, completing the argument.
\end{proof}

Given this lemma, let $u_0 \in H^1(\R^3)$ be the $H^1$ limit of (a subsequence) of $u_n(x,0)$,
and let $u(x,t)$ be the corresponding solution of~\eqref{NLS} on its maximal existence
interval $I_{max} \ni 0$. 
We see $\S_{\e,\om}(u) = \S_{\e,\om}(u_0) \leq m^*$.
Whether $u$ is global or not, it follows from
Lemma~\ref{local} (part 2), \eqref{L10} and Lemma~\ref{perturbations}, that
\[
  \| u \|_{L^{10}_t L^{10}_x (I_{max})} = \infty, \quad
  \mbox{ hence also } \S_{\e,\om}(u) = m^*.
\]
It follows also that 
\[
  \{ u(\cdot,t) \; | \;  t \in I_{max} \} \mbox{ is a pre-compact set in } H^1(\R^3).
\] 
To see this, let $\{ t_n \}_{n=1}^\infty \subset I_{max}$, and note that since
\[
  \| u \|_{L^{10}_t L^{10}_x ((-T_{min}, t_n])} = 
  \| u \|_{L^{10}_t L^{10}_x ([t_n, T_{max}))} = \infty,
\]
and so (the proof of) Lemma~\ref{Palais-Smale} applied to the sequence
$u(x,t-t_n)$ implies that $\{ u(x,t_n) \}$ has a convergent subsequence in $H^1$. 

The final step is to show that this `would-be' solution $u$ with these special properties,
sometimes called a {\it critical element} cannot exist.
For this, first note that $u$ must be global: $I_{max} = \R$. This is because if, say,
$T_{max} < \infty$, then for any $t_n \to T_{max}-$, $u(\cdot,t_n) \to \tilde{u}_0 \in H^1(\R^3)$
(up to subsequence) in $H^1$, by the pre-compactness. Then by comparing $u$ with 
the solution $\tilde{u}$ of~\eqref{NLS} with initial data $\tilde{u}_0$ at $t = t_n$ using 
Lemma~\eqref{perturbations}, we conclude that $u$ exists for times beyond $T_{max}$,
a contradiction. 

Finally, the possible existence of (the now global) solution 
$u$ is ruled out via a suitably cut-off version of the virial identity~\eqref{virial},
using~\eqref{gap}, and the compactness to control the 
errors introduced by the cut-off, exactly as in \cite{KOPV}
(Proposition 10, and what follows it).
$\Box$


\section*{Acknowledgements}
The authors thank T.-P.Tsai for suggesting the problem, helpful discussion, and sharing the pre-print \cite{Gust}.
The first author acknowledges support from the NSERC CGS. 
Research of the second author is supported by an NSERC Discovery Grant.


\noindent email: colesmp@math.ubc.ca \newline 
email: gustaf@math.ubc.ca \newline
\newline
Keywords: nonlinear Schr\"odinger equation, Lyapunov-Schmidt reduction, ground state solitary waves, scattering, blow-up


\begin{thebibliography}{99}


\bibitem{Slim}
T. Akahori, S. Ibrahim, H.  Kikuchi, H. Nawa,  
\emph{Existence of a Ground State and Blow-Up Problem for a Nonlinear Schr\"odinger Equation with Critical Growth},
Differential and Integral Equations, Vol.25, No.3-4 (2012), 383-402.

\bibitem{Slim2}
T. Akahori, S. Ibrahim, H. Kikuchi, H. Nawa
\emph{Existence of a Ground State and Scattering for a Nonlinear
Schr\"odinger Equation with Critical Growth},
Selecta Math., Vo. 19, No. 2 (2013), 545-609.

\bibitem{Slim3}
T. Akahori, S. Ibrahim, N. Ikoma, H. Kikuchi, H. Nawa
\emph{Uniqueness and nondegeneracy of ground states to nonlinear 
scalar field equations involving the Sobolev critical exponent in their nonlinearities 
for high frequencies},
Preprint (2018), arXiv: 1801.08696.

\bibitem{Alves}
C. O. Alves, M. A. S. Souto, M. Montenegro, 
\emph{Existence of a Ground State Solution for a Nonlinear Scalar Field Equation with Critical Growth}, 
Calc. Var., Vol.43, No.3 (2012), 537-554. 

\bibitem{aubin} 
T. Aubin, 
\emph{\'Equations Diff\'erentielles Non Lin\'eaires et Probl\`eme de Yamabe Concernant
la Courbure Scalaire}, 
J. Math. Pures Appl., IX. S\'er., Vol.55 (1976), 269-296. 

\bibitem{BahouriGerard}
H. Bahouri, P. G\'erard, 
\emph{High Frequency Approximation of Solutions to Critical Nonlinear Wave Equations}, 
Amer. J. Math., Vol.121 (1999), 131-175. 

\bibitem{Bere2}
H. Berestycki, T. Cazenave, 
\emph{Instabilit\'e des \'Etats Stationnaires Dans les \'Equations de Schr\"odinger et de Klein-Gordon Non Lin\'eaires}, 
C. R. Acad. Sci. Paris S\'er. I Math., Vol.293 (1981), 489-492. 

\bibitem{Bere3}
H. Berestycki, P.-L. Lions, 
\emph{Nonlinear Scalar Field Equations. I. Existence of a Ground State}, 
Arch. Rat. Mech. Anal., Vol.83 (1983), 313-345. 

\bibitem{Brezis}
H. Br\'ezis, L. Nirenberg, 
\emph{Positive Solutions of Nonlinear Elliptic Equations Involving Critical Sobolev Exponents}, 
Comm. Pure Appl. Math., Vol.36 (1983), 437-477. 

\bibitem{Cazenave}
T. Cazenave, 
``Semilinear Schr\"{o}dginer Equations,"
Amer. Math. Soc., Providence, RI, 2003.

\bibitem{Chen}
W. Chen, J. D\'avila, I. Guerra, 
\emph{Bubble Tower Solutions for a Supercritical Elliptic Problem in $\R^N$}, 
Ann. Sc. Norm. Super. Pisa Cl. Sci. (5) Vol.XV (2016), 85-116. 

\bibitem{Davila}
J. D\'avila, M. del Pino, I. Guerra, 
\emph{Non-Uniqueness of Positive Ground States of Non-Linear Schr\"odinger Equations}, 
Proc. London Math. Soc., Vol.106, No.2 (2013), 318-344.

\bibitem{Dodson}
B. Dodson, 
\emph{Global Well-Posedness and Scattering for the Focusing Energy-Critical Nonlinear Schr\"odinger Problem in Dimension $d=4$ for Initial Data Below a Ground State Threshold}, Preprint (2014), arXiv: 1409.1950.

\bibitem{Duyc}
T. Duyckaerts, F. Merle, 
\emph{Dynamic of Threshold Solutions for Energy-Critical NLS}, 
Geom. Funct. Anal., Vol.18, No.6 (2009), 1787-1840.

\bibitem{Evans}
L. Evans, 
``Partial Differential Equations" (2nd ed.), 
Amer. Math. Soc., 2010.

\bibitem{Fibich}
G. Fibich,
``The Nonlinear Schr\"odinger Equation: Singular Solutions and Optical Collapse,"
Springer, 2015.

\bibitem{Gerard}
P. G\'erard, 
\emph{Description du D\'efaut de Compacit\'e de L'injection de Sobolev},
ESAIM Control Optim. Calc. Var., Vol.3 (1998), 213-233. 

\bibitem{Gust}
S. Gustafson, T.-P. Tsai, I. Zwiers, 
\emph{Energy-critical limit of solitons, and the best constant in the
Gagliardo-Nirenberg inequality}, 
Unpublished.


\bibitem{Kato}
A. Jensen, T. Kato,
\emph{Spectral Properties of  Schr\"{o}dinger Operators and Time-Decay of the Wave Functions},
Duke Math. J., Vol.46, No.3 (1979), 583-611.

\bibitem{Kenig}
C. Kenig, F. Merle, 
\emph{Global Well-Posedness, Scattering and Blow-Up for the Energy-Critical, Focusing, Non-Linear Schr\"odinger Equation in the Radial Case},
Invent. Math., Vol.166 (2006), 645-675.  

\bibitem{Keraani}
S. Keraani, 
\emph{On the Defect of Compactness for the Strichartz Estimates for the Schr\"odinger Equations}, 
J. Diff. Eq., Vol.175, No.2 (2001), 353-392. 

\bibitem{KOPV}
R. Killip, T. Oh, O. Pocovnicu, M. Vi\c{s}an,
\emph{Solitons and Scattering for the Cubic-Quintic Nonlinear Schr\"odinger Equation on $\R^3$}, 
Arch. Rat. Mech. Anal., Vol. 225, No. 1 (2017), 469-548.

\bibitem{Killip_Visan}
R. Killip, M. Vi\c{s}an, 
\emph{The Focusing Energy-Critical Nonlinear Schr\"odinger Equation in Dimensions Five and Higher}, 
Amer. J. Math., Vol.132, No.2 (2010), 361-424. 

\bibitem{Killip}
R. Killip, M. Vi\c{s}an, 
``Nonlinear Schr\"odinger Equations at Critical Regularity,"
Clay Mathematics Proceedings, Volume 17, 2013. 

\bibitem{Naka}
K. Nakanishi, W. Schlag, 
\emph{Global Dynamics Above the Ground State Energy for the Cubic NLS Equation in 3D}, 
Calc. Var., Vol.44, No.1 (2012), 1-45. 

\bibitem{Nawa}
H. Nawa, 
\emph{Asymptotic and Limiting Profiles of Blowup Solutions of the Nonlinear Schr\"odinger Equaitons with Critical Power}, 
Comm. Pure Appl. Math., Vol.52 (1999), 193-270. 

\bibitem{OgawaTsutsumi}
T. Ogawa, Y. Tsutsumi, 
\emph{Blow-Up of $H^1$ Solution for the Nonlinear Schr\"odinger Equation}, 
J. Diff. Eq., Vol.92 (1991), 317-330. 

\bibitem{PayneSattinger}
L. E. Payne, D. H. Sattinger, 
\emph{Saddle Points and Instability of Nonlinear Hyperbolic Equations}, 
Israel J. Math., 
Vol.22, No.3  
(1975), 273-303. 

\bibitem{Sulem}
C. Sulem, P.-L. Sulem, 
``The Nonlinear Schr\"{o}dinger Equation,"
Springer, 1999.

\bibitem{talenti}
G. Talenti, 
\emph{Best Constant in Sobolev Inequality}, 
Ann. Mat. Pura Appl., 
Vol.110 (1976), 353-372. 


\bibitem{TVZ}
T. Tao, M. Vi\c{s}an, X. Zhang, 
\emph{The Nonlinear Schr\"odinger Equation with Combined Power-Type Nonlinearities}, 
Comm. Par. Diff. Eq., 
Vol.32, No.8 (2007), 1281-1343. 

\end{thebibliography}
\end{document}